\theoremstyle{thmstyleone}%
\newtheorem{theorem}{Theorem}
\newtheorem{lemma}{Lemma}
\theoremstyle{thmstyletwo}%
\newtheorem{remark}{Remark}%
\theoremstyle{thmstylethree}%
\begin{document}

\title[On the Convergence Analysis of Yau-Yau Nonlinear Filtering Algorithm]{On the Convergence Analysis of Yau-Yau Nonlinear Filtering Algorithm: from a Probabilistic Perspective}


\author[1]{\fnm{Zeju} \sur{Sun}}\email{szj20@mails.tsinghua.edu.cn}

\author[2]{\fnm{Xiuqiong} \sur{Chen}}\email{cxq0828@ruc.edu.cn}

\author*[1,3]{\fnm{Stephen S.-T.} \sur{Yau}}\email{yau@uic.edu}

\affil[1]{\orgdiv{Department of Mathematical Sciences}, \orgname{Tsinghua
		University}, \orgaddress{\city{Beijing}, \postcode{100084}, \country{China}}}

\affil[2]{\orgdiv{School of Mathematics}, \orgname{Renmin University of China}, \orgaddress{\city{Beijing}, \postcode{100872}, \country{China}}}

\affil[3]{\orgname{Beijing Institute of Mathematical Sciences and Applications}, \orgaddress{\city{Beijing}, \postcode{101408}, \country{China}}}


\abstract{At the beginning of this century, a real time solution of the nonlinear filtering problem without memory was proposed in \cite{yymethod1,yymethod} by the third author and his collaborator, and it is later on referred to as Yau-Yau algorithm. During the last two decades, a great many nonlinear filtering algorithms have been put forward and studied based on this framework. In this paper, we will generalize the results in the original works and conduct a novel convergence analysis of Yau-Yau algorithm from a probabilistic perspective. Instead of considering a particular trajectory, we estimate the expectation of the approximation error, and show that commonly-used statistics of the conditional distribution (such as conditional mean and covariance matrix) can be accurately approximated with arbitrary precision by Yau-Yau algorithm, for general nonlinear filtering systems with very liberal assumptions. This novel probabilistic version of convergence analysis is more compatible with the development of modern stochastic control theory, and will provide a more valuable theoretical guidance for practical implementations of Yau-Yau algorithm.}

\keywords{nonlinear filtering, DMZ equation, Yau-Yau algorithm, convergence analysis, stochastic partial differential equation}


\pacs[MSC Classification]{60G35, 93E11, 60H15, 65M12}

\maketitle

\section{Introduction}
Filtering is an important subject in the field of modern control theory, and has wide applications in various scenarios such as signal processing \cite{candy2016bayesian}\cite{roth2017ensemble}, weather forecast \cite{galanis2006applications}\cite{chen2014short}, aerospace industrial \cite{ichard2015random}\cite{sun2019particle} and so on. The core objective of filtering problem is pursuing accurate estimation or prediction to the state of a given stochastic dynamical system based on a series of noisy observations \cite{jazwinski2007stochastic}\cite{fundamentals}. For practical implementations, it is also necessary that the estimation or prediction to the state can be computed in a recursive and real-time manner.

In the filtering problems we consider in this paper, the evolution of state processes, as well as the noisy observations, is governed by the following system of stochastic differential equations,
\begin{equation}
	\left\{\begin{aligned}
		&dX_{t} = f(X_{t})dt + g(X_{t})dV_{t},\quad X_{0} = \xi,\\
		&dY_{t} = h(X_{t})dt + dW_{t},\quad Y_{0} = 0,
	\end{aligned}\right.\quad t\in [0,T], 
	\label{eq:1}
\end{equation}
in the filtered probability space $(\Omega,\mathcal{F},\{\mathcal{F}_{t}\}_{t=0}^{T},P)$, where $T > 0$ is a fixed termination; $X = \{X_{t}:0\leq t\leq T\}\subset\mathbb{R}^{d}$ is the state process we would like to track; $Y = \{Y_{t}:0\leq t\leq T\}\subset\mathbb{R}^{d}$ is the noisy observation to the state process $X$; $\{V_{t}:0\leq t\leq T\}$ and $\{W_{t}:0\leq t\leq T\}$ are mutually independent, $\mathcal{F}_{t}$-adapted, $d$-dimensional standard Brownian motions; $\xi$ is a $\mathbb{R}^{d}$-valued random variable with probability density function $\sigma_{0}(x)$, which is independent of $V_{t}$ and $W_{t}$; $f:\mathbb{R}^{d}\rightarrow\mathbb{R}^{d}$, $g:\mathbb{R}^{d}\rightarrow\mathbb{R}^{d\times d}$ and $h:\mathbb{R}^{d}\rightarrow\mathbb{R}^{d}$ are sufficiently smooth vector- or matrix- valued functions. 

Mathematically, for a given test function $\varphi:\mathbb{R}^{d}\rightarrow\mathbb{R}$, the \textit{optimal} estimation of $\varphi(X_{t})$ based on the historical observations up to time $t$ is the conditional expectation $E[\varphi(X_{t})|\mathcal{Y}_{t}]$, where $\mathcal{Y}_{t} := \sigma\{Y_{s}:0\leq s\leq t\}$ the $\sigma$-algebra generated by historical observations. Such estimation is `\textit{optimal}' in the sense that,
\begin{equation}
	E[\varphi(X_{t})|\mathcal{Y}_{t}] = \mathop{\text{arg min}}\limits_{U\text{ is }\mathcal{Y}_{t}\text{-measurable}}E[(\varphi(X_{t})-U)^{2}].
\end{equation}
Therefore, the main task of filtering problem can be specified into finding efficient algorithms to numerically compute the conditional expectation $E[\varphi(X_{t})|\mathcal{Y}_{t}]$, or equivalently, the conditional probability distribution $P[X_{t}\in\cdot|\mathcal{Y}_{t}]$ or the conditional probability density function (if exists).

With some regularity assumptions on the coefficients $f,g,h$ in the system \eqref{eq:1}, the conditional probability measure $P[X_{t}\in\cdot|\mathcal{Y}_{t}]$ is absolutely continuous with respect to the Lebesgue measure in $\mathbb{R}^{d}$, and the conditional probability density function $\sigma(t,x)$, (without considering a normalization constant), can be described by the well-known DMZ equation, which is named after three researchers: T. E. Duncan \cite{Duncan1967}, R. E. Mortensen \cite{mortensen1966optimal} and M. Zakai \cite{zakai1969optimal}, who derived the equation independently in the late 1960s.  

The DMZ equation satisfied by the unnormalized conditional probability density function $\sigma(t,x)$ is a second-order stochastic partial differential equation, and does not possess an explicit-form solution in general. In their works \cite{yymethod1} and \cite{yymethod}, the third author and his collaborator propose a two-stage algorithm framework to compute the solution of the DMZ equation numerically in a memoryless and real-time manner. Later on, this algorithm is referred to as Yau-Yau algorithm.

The basic idea of Yau-Yau algorithm is that the heavy computation burden of numerically solving a Kolmogorov-type partial differential equation (PDE) can be done off-line, and in the meanwhile, the on-line procedure only consists of the basic computations such as multiplication by the exponential function of the observations. In the framework of Yau-Yau algorithm, various kinds of methods to solving the Kolmogorov-type PDEs, such as spectral methods \cite{luo2013hermite}\cite{dong2020solving}, proper orthogonal decomposition \cite{wang2019proper}, tensor training \cite{li2022solving}, etc, are proposed and applied in specific examples of nonlinear filtering problems. Numerical results in the previous works mentioned above show that Yau-Yau algorithm can provide accurate and real-time estimations to the state process of very general nonlinear filtering problems in low and medium-high dimensional space.

In the original works \cite{yymethod1} and \cite{yymethod}, the convergence analysis of Yau-Yau algorithm is conducted pathwisely. For \textit{regular} paths of observations with some boundedness conditions, it is proved that the numerical solution provided by Yau-Yau algorithm will converge to the exact solution of DMZ equation both pointwisely and in $L^{2}$-sense, as the size of time-discretization step tends to zero, while in the works on the practical implementations of Yau-Yau algorithm, such as \cite{luo2013hermite} and \cite{dong2020solving}, the convergence analysis mainly focuses on the capability of numerical methods to approximate the solution of Kolmogorov-type PDEs arising in Yau-Yau algorithms. 

In this paper, we will revisit the convergence analysis of Yau-Yau algorithm from a probabilistic perspective. Instead of considering the convergence results pathwisely, we prove that the solution of Yau-Yau algorithm will converge to the exact solution of DMZ equation in expectation, and also, after the normalization procedure, the approximated solution to the filtering problem provided by Yau-Yau algorithm will converge to the conditional expectation $E[\varphi(X_{t})|\mathcal{Y}_{t}]$.

The advantage of this probabilistic perspective is that for a theoretically rigorous convergence analysis, instead of regularity assumptions on observation paths, we only need to make assumptions on the coefficients $f,g,h$ of the filtering system \eqref{eq:1} and the test function $\varphi$, which are verifiable off-line in advance for practitioners. In the meanwhile, as shown in the main results of this paper in Section 3, those assumptions we need are in fact quite general, and it is straightforward to check that the most commonly used test functions $\varphi(x) = x_{i}$ and $\varphi(x) = x_{i}x_{j}$, $x = (x_{1},\cdots,x_{d})^{\top}\in\mathbb{R}^{d}$, corresponding to the conditional mean and conditional covariance matrix, as well as the linear Gaussian systems (with $f(x) = Fx$, $g(x)\equiv \Gamma$, and $h(x) = Hx$, $F,H,\Gamma\in \mathbb{R}^{d\times d}$), satisfy all the assumptions. 

Moreover, to the best of the authors' knowledge, most of the theoretical analysis of PDE-based filtering algorithm mainly deals with convergence results with respect to the DMZ equation. In such probabilistic perspective we consider here in this paper, however, it is natural and convenient to make a step forward and discuss the approximation capability of Yau-Yau algorithm to the \textit{normalized} conditional expectation and conditional probability distribution. In this way, we will provide a thorough convergence analysis of the Yau-Yau algorithm for filtering problems.

The organization of this paper is as follows. Section \ref{sec:2} serves as preliminaries, in which we will summarize some basic concepts of filtering problems and the main procedure of Yau-Yau algorithm. The main theorems in this paper will be stated in Section \ref{sec:3}, together with a sketch of the proofs. In the next four sections, we will provide the detailed proofs of the lemmas and theorems. We first focus on the properties of the exact solution of DMZ equation in Section \ref{sec:4} and Section \ref{sec:5}, and then deal with the approximated solutions given by Yau-Yau algorithm in Section \ref{sec:6} and Section \ref{sec:7}. Finally, Section \ref{sec:8} is a conclusion. 

\section{Preliminaries}
\label{sec:2}
In this section, we would like to briefly summarize the theory of nonlinear filtering, including the change-of-measure approach to deriving the DMZ equation, as well as the main idea and procedures of Yau-Yau algorithm. 

In the change-of-measure approach to deriving the DMZ equation corresponding to the filtering system \eqref{eq:1}, we first introduce a series of reference probability measures $\{\tilde{P}_{t}:0\leq t\leq T\}$, absolutely continuous to the original probability measure $P$ with Radon derivatives given by
\begin{equation}
	Z_{t} \triangleq \frac{d\tilde{P}_{t}}{dP}\biggr|_{\mathcal{F}_{t}} = \exp\biggl(-\int_{0}^{t}h(X_{s})^{\top}dW_{s} - \frac{1}{2}\int_{0}^{t}|h(X_{s})|^{2}ds\biggr),\ t\in[0,T].
	\label{eq:3}
\end{equation}

According to Girsanov's theorem, as long as the process $\{Z_{t}:0\leq t\leq T\}$ defined in \eqref{eq:3} is a martingale, then under the reference probability measure $\tilde{P}_{T}$, the observation process $\{Y_{t}:0\leq t\leq T\}$ is a standard Brownian motion which is independent of the state process $X$. 

We also introduce the process $\{\tilde{Z}_{t}:0\leq t\leq T\}$, $\tilde{Z}_{t} = Z_{t}^{-1}$, to be the inverse of $Z_{t}$, which is also a Radon derivative and can be expressed by the stochastic integral with respect to $Y$ as follows:
\begin{equation}
	\tilde{Z}_{t} = Z_{t}^{-1} = \frac{dP}{d\tilde{P}_{t}}\biggr|_{\mathcal{F}_{t}} = \exp\biggl(\int_{0}^{t}h(X_{s})^{\top}dY_{s} - \frac{1}{2}\int_{0}^{t}|h(X_{s})|^{2}ds\biggr),\ t\in[0,T].
	\label{eq:4b}
\end{equation}
Therefore, for any $\mathcal{F}_{t}$-measurable, integrable random variable $U\in\mathcal{F}_{t}$, its expectation with respect to measure $P$ can be computed by
\begin{equation}
	E[U] = \tilde{E}\left[\tilde{Z}_{t}U\right],
\end{equation}
where $\tilde{E}$ means the expectation is taken under the probability measure $\tilde{P}_{T}$. 

As an extension of Bayesian formula in the context of continuous-time stochastic processes, the following Kallianpur-Striebel formula allows us to express and calculate the solution of filtering problem, $E[\varphi(X_{t})|\mathcal{Y}_{t}]$, by a ratio of conditional expectations under $\tilde{P}_{T}$:
\begin{equation}
	E[\varphi(X_{t})|\mathcal{Y}_{t}] = \frac{\tilde{E}\left[\tilde{Z}_{t}\varphi(X_{t})|\mathcal{Y}_{t}\right]}{\tilde{E}\left[\tilde{Z}_{t}|\mathcal{Y}_{t}\right]},\ t\in[0,T]. 
	\label{eq:6a}
\end{equation}
Since the denominator $\tilde{E}\left[\tilde{Z}_{t}|\mathcal{Y}_{t}\right]$ in \eqref{eq:6a} is independent of the test function $\varphi$, people often refer to the nominator, $\tilde{E}\left[\tilde{Z}_{t}\varphi(X_{t})|\mathcal{Y}_{t}\right]$, as the unnormalized conditional expectation of $\varphi(X_{t})$. The corresponding measure-valued stochastic process $\{\rho_{t}:0\leq t\leq T\}$ defined by
\begin{equation}
	\rho_{t}(A):=\tilde{E}\left[\tilde{Z}_{t}1_{A}|\mathcal{Y}_{t}\right],\ \forall\ A\in \mathcal{F}_{t},\ t\in [0,T],
\end{equation}
is also referred to as unnormalized conditional probability measure, and we also denote the unnormalized conditional expectation by
\begin{equation}
	\rho_{t}(\varphi) := \tilde{E}\left[\tilde{Z}_{t}\varphi(X_{t})|\mathcal{Y}_{t}\right],\ \varphi\text{ is a test function},\ t\in[0,T],
\end{equation} 
With sufficient regularity assumptions on the coefficients $f,g,h$ and test function $\varphi$, the evolution of $\rho_{t}(\varphi)$ is governed by the following well-known DMZ equation:
\begin{equation}
	\rho_{t}(\varphi) = \rho_{0}(\varphi) + \int_{0}^{t}\rho_{s}(\mathcal{L}\varphi)ds + \sum_{j=1}^{d}\int_{0}^{t}\rho_{s}(h_{j}\varphi)dY_{s}^{j},\ t\in [0,T]. 
\end{equation}
where
\begin{equation}
	\mathcal{L} = \frac{1}{2}\sum_{i,j=1}^{d}a^{ij}(x)\frac{\partial^{2}}{\partial x_{i}\partial x_{j}} + \sum_{i=1}^{d}f_{i}(x)\frac{\partial}{\partial x_{i}}
\end{equation}
is a second-order elliptic operator with $a(x) := (a^{ij}(x))_{1\leq i,j\leq d} = g(x)g(x)^{\top}$.

If the stochastic measures $\rho_{t}$, $t\in[0,T]$, are almost surely absolutely continuous to the Lebesgue measure in $\mathbb{R}^{d}$, and the density functions (or the Radon derivatives) $\sigma(t,x)$, as well as the derivatives of $\sigma(t,x)$, is square-integrable, then $\sigma(t,x)$ is the solution to the following equation, (which is also referred to as the DMZ equation):
\begin{equation}
	\left\{\begin{aligned}
		&d\sigma(t,x) = \mathcal{L}^{*}\sigma(t,x)dt + \sum_{j=1}^{d}h_{j}(x)\sigma(t,x)dY_{t}^{j},\ t\in[0,T],\\
		&\sigma(0,x) = \sigma_{0}(x),
	\end{aligned}\right.
	\label{eq:2}
\end{equation}
which is a second-order stochastic partial differential equation with 
\begin{equation}
	\mathcal{L}^{*}(\star) = \frac{1}{2}\sum_{i,j=1}^{d}\frac{\partial^{2}}{\partial x_{i}\partial x_{j}}(a^{ij}\star) - \sum_{i=1}^{d}\frac{\partial}{\partial x_{i}}(f_{i}\star). 
\end{equation}
the adjoint operator of $\mathcal{L}$. 

In this case, the unnormalized conditional expectation $\rho_{t}(\varphi)$ can be expressed as 
\begin{equation}
	\rho_{t}(\varphi) = \tilde{E}\left[\tilde{Z}_{t}\varphi(X_{t})|\mathcal{Y}_{t}\right] = \int_{\mathbb{R}^{d}}\varphi(x)\sigma(t,x)dx,\ \varphi\text{ is a test function},\ t\in[0,T],
\end{equation}
and the (normalized) conditional expectation $E\left[\varphi(X_{t})|\mathcal{Y}_{t}\right]$ can be calculated by
\begin{equation}
	E\left[\varphi(X_{t})|\mathcal{Y}_{t}\right] = \frac{\tilde{E}\left[\tilde{Z}_{t}\varphi(X_{t})|\mathcal{Y}_{t}\right]}{\tilde{E}\left[\tilde{Z}_{t}|\mathcal{Y}_{t}\right]} = \frac{\int_{\mathbb{R}^{d}}\varphi(x)\sigma(t,x)dx}{\int_{\mathbb{R}^{d}}\sigma(t,x)dx}. 
	\label{eq:3a}
\end{equation}
Because the solution of \eqref{eq:2} does not have a closed form for general nonlinear filtering systems, efficient numerical methods must be proposed, so that we can get a good approximation to the conditional expectation $E[\varphi(X_{t})|\mathcal{Y}_{t}]$ through the equation \eqref{eq:3a}. 

At the beginning of this century, the third author and his collaborator proposed a two-stage algorithm to numerically solve the DMZ equation \eqref{eq:2} in a memoryless and real-time manner, which is often referred to as Yau-Yau algorithm. Here, we would like to briefly introduce the basic idea and main procedure of this algorithm.

Firstly, if we consider the exponential transformation 
\begin{equation}
	w(t,x) := \exp\left(-h^{\top}(x)Y_{t}\right)\sigma(t,x),\ t\in[0,T],
	\label{eq:14}
\end{equation}
then the function $w(t,x)$ satisfies the robust DMZ equation
\begin{equation}
	\frac{\partial w}{\partial t} = \frac{1}{2}\sum_{i,j=1}^{d}a^{ij}(x)\frac{\partial^{2}w}{\partial x_{i}\partial x_{j}} + \sum_{i=1}^{d}F_{i}(t,x)\frac{\partial w}{\partial x_{i}} + J(t,x)w(t,x),
	\label{eq:4a}
\end{equation}
where the stochastic differential terms in the original DMZ equation \eqref{eq:2} are eliminated and
\begin{equation*}
	F_{i}(t,x) = \sum_{j=1}^{d}\biggl(\frac{\partial a^{ij}}{\partial x_{j}} + a^{ij}\sum_{k=1}^{d}Y_{t}^{k}\frac{\partial h_{k}}{\partial x_{j}}\biggr) - f_{i}(x),\quad i = 1,\cdots,d,
\end{equation*}
\begin{equation} 
	\begin{aligned}
		J(t,x) =& \frac{1}{2}\sum_{i,j=1}^{d}\frac{\partial^{2}a^{ij}}{\partial x_{i}\partial x_{j}} + \sum_{i,j,k=1}^{d}Y_{t}^{k}\frac{\partial h_{k}}{\partial x_{j}}\frac{\partial a^{ij}}{\partial x_{i}} \\
		&+ \frac{1}{2}\sum_{i,j=1}^{d}a^{ij}\biggl(\sum_{k=1}^{d}Y_{t}^{k}\frac{\partial^{2}h_{k}}{\partial x_{i}\partial x_{j}} + \sum_{k=1}^{d}\sum_{l=1}^{d}Y_{t}^{k}Y_{t}^{l}\frac{\partial h_{k}}{\partial x_{i}}\frac{\partial h_{l}}{\partial x_{j}}\biggr)\\
		&-\sum_{i=1}^{d}\frac{\partial f_{i}}{\partial x_{i}} - \sum_{i,j=1}^{d}Y_{t}^{j}\frac{\partial h_{j}}{\partial x_{i}}f_{i}(x) - \frac{1}{2}|h|^{2}
	\end{aligned}
	\label{eq:16}
\end{equation}
are stochastic functions that depend on the specific value of observation $Y_{t}$ at time $t$.

Instead of solving equation \eqref{eq:2} directly, we will mainly focus on the robust DMZ equation \eqref{eq:4a}, especially the corresponding initial-boundary value (IBV) problems in a closed ball $B_{R}:=\{x\in\mathbb{R}^{d}:|x|\leq R\}$, with a given radius $R > 0$. 
\begin{equation}
	\left\{\begin{aligned} 
		&\frac{\partial u_{R}}{\partial t} = \frac{1}{2}\sum_{i,j=1}^{d}a^{ij}(x)\frac{\partial^{2}u_{R}}{\partial x_{i}\partial x_{j}} + \sum_{i=1}^{d}F_{i}(t,x)\frac{\partial u_{R}}{\partial x_{i}} + J(t,x)u_{R}(t,x),\ t\in [0,T],\\
		&u_{R}(0,x) = \sigma_{0}(x)\cdot\mathscr{S}_{R}(x),\ x\in B_{R},\\
		&u_{R}(t,x) = 0,\ (t,x)\in[\tau_{k-1},\tau_{k}]\times\partial B_{R}.
	\end{aligned}\right.
	\label{eq:4}
\end{equation}
where $\mathscr{S}_{R}(x)$ is a $C^{\infty}$ function supported in $B_{R}$ and satisfies
\begin{equation}
	\mathscr{S}_{R}(x) = \left\{\begin{aligned}
		&1,\quad |x|\leq R-\frac{1}{R}\\
		&0,\quad |x|\geq R
	\end{aligned}\right.
	\label{eq:19}
\end{equation}
and $0\leq \mathscr{S}_{R}(x)\leq 1$ for $R-\frac{1}{R}\leq |x|\leq R$, so that the initial value is compatible with the boundary condition in \eqref{eq:4}. And from now on, we would like to drop the subscript, $R$, in the notation $u_{R}(t,x)$ for the simplicity of notations, and use $u(t,x)$ to denote the solution to the IBV problem \eqref{eq:19}.

Let $0=\tau_{0}<\tau_{1}<\cdots<\tau_{K} = T$ be a uniform partition of the time interval $[0,T]$, with $\tau_{k}-\tau_{k-1} = \delta=\frac{T}{K}$, $k=1,\cdots, K$. On each time interval $[\tau_{k-1},\tau_{k}]$, consider the IBV problem of the following parabolic equation
\begin{equation}
	\left\{\begin{aligned} 
		&\frac{\partial u_{k}}{\partial t} = \frac{1}{2}\sum_{i,j=1}^{d}a^{ij}(x)\frac{\partial^{2}u_{k}}{\partial x_{i}\partial x_{j}} + \sum_{i=1}^{d}F_{i}(\tau_{k-1},x)\frac{\partial u_{k}}{\partial x_{i}}+ J(\tau_{k-1},x)u_{k}(t,x),\\
		&\qquad\qquad\qquad\qquad\qquad\qquad\qquad\qquad\qquad\qquad(t,x)\in(\tau_{k-1},\tau_{k}]\times B_{R}, \\
		&u_{k}(\tau_{k-1},x) = u_{k-1}(\tau_{k-1},x),\ x\in B_{R},\\
		&u_{k}(t,x) = 0,\ (t,x)\in[\tau_{k-1},\tau_{k}]\times \partial B_{R},
	\end{aligned}\right.
	\label{eq:5}
\end{equation} 
with the value of coefficients $F(t,x)$ and $J(t,x)$ frozen at the left point $t = \tau_{k-1}$ and initial value $u_{0}(\tau_{0},x) :=\sigma_{0}(x)$. 

With another exponential transformation given by
\begin{equation}
	\tilde{u}_{k}(t,x) = \exp\left(h^{\top}(x)Y_{\tau_{k-1}}\right)u_{k}(t,x),\ t\in [\tau_{k-1},\tau_{k}],
	\label{eq:17}
\end{equation}
the newly-constructed function $\tilde{u}_{k}$ satisfies
\begin{equation}
	\left\{\begin{aligned}
		&\frac{\partial \tilde{u}_{k}}{\partial t} = \frac{1}{2}\sum_{i,j=1}^{d}\frac{\partial^{2}}{\partial x_{i}\partial x_{j}}(a^{ij}\tilde{u}_{k}(t,x)) \\
		&\quad- \sum_{i=1}^{d}\frac{\partial}{\partial x_{i}}(f_{i}\tilde{u}_{k}(t,x)) - \frac{1}{2}|h|^{2}\tilde{u}_{k}(t,x),\ (t,x)\in(\tau_{k-1},\tau_{k}]\times B_{R},\\
		&\tilde{u}_{k}(\tau_{k-1},x) = \exp\biggl(h^{\top}(x)Y_{\tau_{k-1}}\biggr)u_{k}(\tau_{k-1},x),\ x\in B_{R}\\
		&\tilde{u}_{k}(t,x) = 0,\ (t,x)\in[\tau_{k-1},\tau_{k}]\times \partial B_{R}
	\end{aligned}\right.
	\label{eq:6}
\end{equation} 
After the two exponential transformations \eqref{eq:14} and \eqref{eq:17}, the function we would like to use to approximate the unnormalized conditional probability density function $\sigma(\tau_{k},x)$ at time $t=\tau_{k}$ is given by
\begin{equation}
	\sigma(\tau_{k},x)\approx \exp\biggl(h^{\top}(x)(Y_{\tau_{k}}-Y_{\tau_{k-1}})\biggr)\tilde{u}_{k}(\tau_{k},x) = \tilde{u}_{k+1}(\tau_{k},x),\  k = 1,\cdots,K.
	\label{eq:8}
\end{equation} 
and the value for approximating the conditional expectation $E[\varphi(X_{t})|\mathcal{Y}_{t}]$ is given by
\begin{equation}
	E[\varphi(X_{t})|\mathcal{Y}_{t}] \approx \frac{\int_{B_{R}}\varphi(x)\tilde{u}_{k+1}(\tau_{k},x)dx}{\int_{B_{R}}\tilde{u}_{k+1}(\tau_{k},x)dx},\ k=1,\cdots,K.
	\label{eq:21}
\end{equation}

The main idea of the Yau-Yau algorithm is that the problem of solving the DMZ equation satisfied by the unnormalized probability density function $\sigma(t,x)$ can be separated into two parts. The computationally expensive part of solving the (IBV) problems of parabolic equation \eqref{eq:6} can be done off-line, because it is a deterministic Kolmogorov-type PDE which is independent of observations, and at least,  the corresponding semi-group, $\{S_{t}:t\in[0,T]\}$, can be analyzed and approximated off-line. When new observation comes, the remaining task is only about calculating exponential transformations and numerical integrals. The framework of Yau-Yau algorithm is shown in Algorithm \ref{alg:1}.
\begin{algorithm}[htb]
	\caption{The Two-Stage Framework of Yau-Yau Algorithm} 
	\begin{algorithmic}[1]
		\STATE \textbf{Initialization:}  Input the terminal time $T$, the radius $R$ of closed ball $B_{R}$, the number of time-discretization steps $K$, the initial distribution of state process $\sigma_{0}(x)$, the test function $\varphi(x)$, and the initial observation $Y_{0} = 0$. Let $\delta = \frac{T}{K}$ be the time-discretization step size and $\{0 = \tau_{0} < \tau_{1}<\cdots<\tau_{K} = T\}$ be a uniform partition of $[0,T]$ with $\tau_{k} -\tau_{k-1} = \delta$. Initialize $\tilde{u}_{1}(0,x) = \sigma_{0}(x)$.
		\STATE \textbf{Off-Line Algorithm:} Solve the IBV problem of Kolmogorov-type partial differential equation \eqref{eq:6} in closed ball $B_{R}$, and determine or approximate the corresponding semi-group $\{S_{t}:t\in[0,T]\}$. 
		\STATE \textbf{On-Line Algorithm:}  
		\FOR{$k = 1$ to $K$} 
		\STATE Obtain $\tilde{u}_{k}(\tau_{k},x)$ from the Off-Line Algorithm
		\begin{equation*}
			\tilde{u}_{k}(\tau_{k},x) = S_{\tau_{k} - \tau_{k-1}}\tilde{u}_{k}(\tau_{k-1},x).
		\end{equation*} 
		\STATE Renew the initial value of the partial differential equation satisfied by $\tilde{u}_{k+1}(x,t)$,
		\begin{equation*}
			\tilde{u}_{k+1}(\tau_{k},x) = \exp\left[h^{\top}(x)(Y_{\tau_{k}} - Y_{\tau_{k-1}})\right]\tilde{u}_{k}(\tau_{k},x).
		\end{equation*}
		\STATE Compute the approximated conditional expectation:
		\begin{equation*}
			\frac{\int_{B_{R}}\varphi(x)\tilde{u}_{k+1}(\tau_{k},x)dx}{\int_{B_{R}}\tilde{u}_{k+1}(\tau_{k},x)dx}.
		\end{equation*}
		\ENDFOR
	\end{algorithmic}
	\label{alg:1}
\end{algorithm}

In the next section, we will give a mathematically rigorous interpretation of the approximation results \eqref{eq:8} and \eqref{eq:21} from a probabilistic perspective. In particular, we only need assumptions on the test function and the coefficients of the filtering systems to derive the convergence result. These assumptions are also easy to verify off-line before the observations come, and therefore, this convergence analysis will provide a guidance for practitioners to determine the parameters in the implementations of Yau-Yau algorithm for practical use.

\section{Main Results}
\label{sec:3}
In this section, we would like to state the main result in this paper and also provide a sketch of the proof. 

Firstly, besides the smoothness and regularity requirements which guarantee the existence of conditional expectation and the existence of the solution to the DMZ equation, let us further introduce four particular assumptions on the coefficients of the system, the initial distribution and the test function.

For the state equation in the filtering system \eqref{eq:1}, the drift term $f:\mathbb{R}^{d}\rightarrow\mathbb{R}^{d}$ is assumed to be Lipschitz, and the the diffusion term $g:\mathbb{R}^{d}\rightarrow\mathbb{R}^{d\times d}$, together with $a(x) = g(x)g(x)^{\top}$, is assumed to have bounded partial derivatives up to second order, i.e.,

\begin{flalign*}
	&\text{\textbf{(A1)}: }\begin{aligned} &\exists\ L>0,\ s.t.\ |f(x)-f(y)|\leq L|x-y|,\\
		&|a(x)|\leq L,\ \biggl|\frac{\partial a^{ij}(x)}{\partial x_{k}}\biggr|\leq L,\ \biggl|\frac{\partial^{2}a^{ij}(x)}{\partial x_{k}\partial x_{l}}\biggr|\leq L,\ \forall \ x,y\in\mathbb{R}^{d},\ i,j,k,l=1,\cdots,d. 
	\end{aligned}
\end{flalign*}

Assumption $\textbf{(A1)}$ guarantees that the state equation of \eqref{eq:1} has a strong solution in $[0,T]$, and especially, the state equation for linear filter satisfies this assumption.   

Also, in order to conduct energy estimations for the (stochastic) partial differential equations, we would like to assume that the diffusion term in the state equation is nondegenerate, in the sense that
\begin{flalign*}
	&\text{\textbf{(A2):}}\quad \begin{aligned} 
		&\text{For each $x\in \mathbb{R}^{d}$, there exists a continuous function $\lambda(x) > 0$},\\ &s.t.\ \sum_{i,j=1}^{d}a^{ij}(x)\zeta_{i}\zeta_{j}\geq \lambda(x)|\zeta|^{2},\ \forall\ \zeta = (\zeta_{1},\cdots,\zeta_{d})^{\top}\in\mathbb{R}^{d}.
	\end{aligned} &
\end{flalign*}

For the initial distribution $\sigma_{0}$, we would like to assume that it is smooth enough and possesses finite high-order moments:
\begin{flalign*}
	&\text{\textbf{(A3):}}\ \int_{\mathbb{R}^{d}}|x|^{2n}\sigma_{0}(x)dx < \infty,\ \forall\ n\in\mathbb{N}.&
\end{flalign*} 

Assumption \textbf{(A3)} is satisfied by commonly-used light-tailed distributions such as Gaussian distributions. In fact, in the following convergence analysis, we only require Assumption \textbf{(A3)} to hold for \textit{sufficiently} large $n\geq 1$, rather than for all $n\in\mathbb{N}$.

Finally, it is assumed that the test function $\varphi:\mathbb{R}^{d}\rightarrow\mathbb{R}$ is at most polynomial growth:
\begin{flalign*}
	&\text{\textbf{(A4): }}\exists \ L>0,\ m\in\mathbb{N},\ s.t.\ |\varphi(x)|\leq L(1+|x|^{2m}),\quad \forall \ x\in \mathbb{R}^{d}, &
\end{flalign*}
which is satisfied by most of the commonly-used test functions, such as those correspond to the conditional mean and covariance matrix.

Based on the above assumptions \textbf{(A1)} to \textbf{(A4)}, the main result in this paper is stated as follows:
\begin{theorem}
	Fix a terminal time $T > 0$ and the filtering system \eqref{eq:1} with smooth coefficients. If Assumptions \textbf{(A1)} to \textbf{(A4)} hold, then for every $\epsilon > 0$, there exists $R > 0$, and $\delta > 0$, such that we can conduct the Yau-Yau algorithm in the closed ball $B_{R} = \{x\in \mathbb{R}^{d}:|x|\leq R\}$ and the uniform partition of $[0,T]$: $0=\tau_{0}<\tau_{1}<\cdots <\tau_{K} = T$ with $\delta = \tau_{k}-\tau_{k-1}$ for $k=1,\cdots,K$, and the numerical solution $\{\tilde{u}_{k+1}(\tau_{k},x):k=1,\cdots,K\}$ approximates the exact solution of the filtering problems at each time step $t = \tau_{k}$ well in the sense of mathematical expectation, i.e.,  
	\begin{equation}
		E\biggl|E[\varphi(X_{\tau_{k}})|\mathcal{Y}_{\tau_{k}}] - \frac{\int_{B_{R}}\varphi(x)\tilde{u}_{k+1}(\tau_{k},x)dx}{\int_{B_{R}}\tilde{u}_{k+1}(\tau_{k},x)dx}\biggr| < \epsilon,\ \forall\ 1\leq k\leq K. 
	\end{equation} 
	\label{thm:4}
\end{theorem}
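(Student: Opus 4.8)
The plan is to first remove the one genuinely dangerous feature of the estimate — the random normalizing denominator — by a change-of-measure identity, and only then to control two purely \emph{unnormalized} errors. Write $N = \rho_{\tau_k}(\varphi) = \int_{\mathbb{R}^d}\varphi\,\sigma(\tau_k,\cdot)\,dx$, $D = \rho_{\tau_k}(1)$ for the exact unnormalized quantities and $\tilde N = \int_{B_R}\varphi\,\tilde u_{k+1}(\tau_k,\cdot)\,dx$, $\tilde D = \int_{B_R}\tilde u_{k+1}(\tau_k,\cdot)\,dx$ for the numerical ones, so the object to bound is $E\,|N/D - \tilde N/\tilde D|$. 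Since $N/D - \tilde N/\tilde D$ is $\mathcal Y_{\tau_k}$-measurable and $D = \tilde E[\tilde Z_{\tau_k}\mid\mathcal Y_{\tau_k}]$, the identity $E[U]=\tilde E[\tilde Z_{\tau_k}U]$ and the tower property give
\[
E\Big|\frac{N}{D}-\frac{\tilde N}{\tilde D}\Big| = \tilde E\Big[D\,\Big|\frac{N}{D}-\frac{\tilde N}{\tilde D}\Big|\Big] \le \tilde E\,|N-\tilde N| + \tilde E\Big[\frac{|\tilde N|}{\tilde D}\,|D-\tilde D|\Big].
\]
The crucial gain is that $\tilde N/\tilde D$ is an average against the nonnegative weight $\tilde u_{k+1}$ (nonnegativity follows from the maximum principle applied to \eqref{eq:6} with $\sigma_0\ge0$), so by \textbf{(A4)} one has $|\tilde N|/\tilde D \le \sup_{B_R}|\varphi|\le L(1+R^{2m})$ and \emph{no lower bound on the denominator is ever required}. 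The theorem is thereby reduced to the two unnormalized $L^1(\tilde P)$ estimates $\tilde E\,|N-\tilde N|$ and $\tilde E\,|D-\tilde D|$, taken under the reference measure where $Y$ is a Brownian motion independent of $X$.

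Next I would split each unnormalized error into a truncation part and a time-discretization part by inserting $\sigma_R(\tau_k,\cdot)$, the exact continuous-time solution of the truncated DMZ dynamics on $B_R$ (cut-off initial datum $\sigma_0\mathscr{S}_R$ and zero boundary values), i.e. the $\delta\to0$ idealization of the algorithm output on the scale of $\sigma$:
\[
\tilde E\Big|\int_{\mathbb{R}^d}\varphi\,\sigma - \int_{B_R}\varphi\,\tilde u_{k+1}\Big| \le \underbrace{\tilde E\Big|\int_{\mathbb{R}^d}\varphi\,\sigma - \int_{B_R}\varphi\,\sigma_R\Big|}_{\text{truncation}} + \underbrace{\tilde E\Big|\int_{B_R}\varphi\,(\sigma_R-\tilde u_{k+1})\Big|}_{\text{discretization}},
\]
and likewise with $\varphi\equiv1$ for the denominator. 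For the truncation part I would use the clean identity $\tilde E\,\rho_{\tau_k}(\psi)=E[\psi(X_{\tau_k})]$: the cut-domain contribution equals $E[|\varphi(X_{\tau_k})|\,\mathbf{1}_{\{|X_{\tau_k}|>R\}}]$, which tends to $0$ by the state-process moment bounds coming from \textbf{(A1)} and \textbf{(A3)}, while the boundary-layer contribution $\int_{B_R}\varphi\,(\sigma-\sigma_R)$ is controlled by the energy estimates of Sections \ref{sec:4}–\ref{sec:5}. Since the denominator error carries the factor $R^{2m}$ from the previous paragraph and $|\varphi|$ itself grows like $|x|^{2m}$, I would invoke \textbf{(A3)} for \emph{sufficiently large} $n$ so that products of the form $R^{2m}\cdot(\text{tail mass})$ vanish as $R\to\infty$ — this is precisely the role of the high-order moment assumption.

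The discretization part is where the probabilistic viewpoint does the real work. It arises from freezing the coefficients $F(t,x)$ and $J(t,x)$ of the robust equation at the left endpoint $\tau_{k-1}$; by \eqref{eq:16} these depend affinely and quadratically on $Y_t$, which under $\tilde P$ is a Brownian motion. The plan is to bound the per-step mismatch in $L^2(B_R)$ by energy estimates for \eqref{eq:5}–\eqref{eq:6} (using nondegeneracy \textbf{(A2)} and the bounded-coefficient bounds \textbf{(A1)}) in terms of $\sup_{[\tau_{k-1},\tau_k]}|Y_t-Y_{\tau_{k-1}}|$ and its square, then to take expectations using the Brownian increment estimates $\tilde E\sup_{[\tau_{k-1},\tau_k]}|Y_t-Y_{\tau_{k-1}}|^p=O(\delta^{p/2})$, so that each of the $K=T/\delta$ steps contributes $o(\delta)$ and the accumulated error vanishes with $\delta$, the accumulation being controlled by a discrete Gronwall argument with a stability constant uniform in $k$ and $\delta$.

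I expect this discretization estimate to be the main obstacle. The frozen coefficients are both unbounded in the observation and polynomially growing on $B_R$, so the energy and stability constants unavoidably depend on $R$; the delicate step is to order the two limits correctly — first fix $R$ large to kill the truncation terms (including the $R^{2m}$ factor), then send $\delta\to0$ with $R$ held fixed — so that the $R$-dependence of the discretization constants does not destroy convergence. Equally delicate is keeping the stability constant uniform in $k$ and $\delta$, so that errors do not amplify geometrically across the $K$ steps as $\delta\to0$; this requires careful Gronwall control of the iterated frozen-coefficient semigroup together with the interleaved exponential multiplications $\exp(h^\top(x)(Y_{\tau_k}-Y_{\tau_{k-1}}))$. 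Once these energy estimates are secured, the change-of-measure reduction, the moment-based truncation, and the final $\epsilon$-bookkeeping are comparatively routine.
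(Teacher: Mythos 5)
Your reduction is, in skeleton, exactly the paper's: the change-of-measure/tower-property step with the bound $|\tilde N|/\tilde D\le\sup_{B_R}|\varphi|\le L(1+R^{2m})$ (which indeed removes any need for a lower bound on the denominator) is precisely the paper's decomposition into the two terms $I_1+I_2$; the insertion of $\sigma_R$ from \eqref{eq:13a} and the ordering of limits ($R$ first, then $\delta$ with $R$ fixed, using \textbf{(A3)} with $n>m$ to beat the $R^{2m}$ factor) is also the paper's. Your handling of the tail term is a mild, valid variant: you convert $\tilde E\int_{|x|\ge R}|\varphi|\sigma\,dx$ into $E\bigl[|\varphi(X_{\tau_k})|1_{\{|X_{\tau_k}|>R\}}\bigr]$ via $\tilde E[\rho_{\tau_k}(\psi)]=E[\psi(X_{\tau_k})]$ and SDE moment estimates, whereas the paper (Theorem \ref{thm:1b}) proves the same bound by a weighted energy/Gronwall argument on the truncated DMZ equation; either route works, and yours is arguably more elementary.

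The genuine gap is in the time-discretization estimate. The frozen coefficients $F(\tau_{k-1},x)$ and $J(\tau_{k-1},x)$ in \eqref{eq:16} depend on the observation value $Y_{\tau_{k-1}}$ — linearly for $F$ and \emph{quadratically} for $J$ — so any per-step energy/Gronwall estimate has a growth rate of order $(1+|Y_{\tau_{k-1}}|)^2$, which is random and unbounded. The "stability constant uniform in $k$ and $\delta$" that your plan invokes does not exist: iterating the pathwise estimate over the $K=T/\delta$ steps produces a factor of order $\exp\bigl(C\int_0^T(1+|Y_s|)^2ds\bigr)$, and under $\tilde P$ (where $Y$ is Brownian motion) this random variable has \emph{infinite} expectation once $CT^2$ is of order one; hence "first prove the pathwise bound, then take expectations using $\tilde E\sup_{[\tau_{k-1},\tau_k]}|Y_t-Y_{\tau_{k-1}}|^p=O(\delta^{p/2})$" cannot close for a general system. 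The paper's resolution — which is the actual content of Sections \ref{sec:6}--\ref{sec:7} and is absent from your proposal — is to split the probability space on the event $\Omega_{M_1}=\{\sup_{0\le t\le T}\sum_j|Y_{t,j}|\le M_1\}$. On $\Omega_{M_1}$ the Gronwall constant becomes deterministic (depending on $M_1$), yielding the bound $C_6e^{M_0M_1}\delta^{1/2}$; on $\Omega_{M_1}^c$ no Gronwall argument is used at all, but rather Chebyshev plus Cauchy--Schwarz, which requires the uniform-in-$k$ second-moment bounds $\tilde E\int_{B_R}|u_k(\tau_k,x)|^2dx\le C$ of Theorem \ref{thm:4a} (itself proved via a recursive $L^4$ estimate exploiting the independence of the Gaussian increments, Lemma \ref{lem:1}) together with the SPDE regularity estimate \eqref{eq:132}, producing a bound $C_{11}/M_1$ independent of $\delta$. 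One then chooses $M_1$ first and $\delta$ second. Without this truncation-plus-moment mechanism (or a substitute for it), your discretization step does not survive the passage to expectations, which is precisely the point at which the probabilistic version of the theorem is harder than the pathwise one.
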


Here in this section, we provide a sketch of the proof of Theorem \ref{thm:4}, in which the main idea of the proof is illustrated. The detailed proofs of those key estimations here will be given in order in the next four sections. 
\begin{proof}[A Sketch of the Proof of Theorem \ref{thm:4}]
	Let $\{\tilde{Z}_{t}:0\leq t\leq T\}$ be the Radon derivative $Z_{t} = \left.\frac{dP}{d\tilde{P}_{t}}\right|_{\mathcal{F}_{t}}$ defined in \eqref{eq:4b}. And therefore, for every integrable, $\mathcal{F}_{t}$-measurable random variable $U$, we have
	\begin{equation}
		E[U] = \tilde{E}[\tilde{Z}_{t}U].
	\end{equation} 
	According to the properties of conditional expectations, the expectation of the approximation error of Yau-Yau algorithm can be estimated as follows:
	\begin{align*}
		E&\biggl|E[\varphi(X_{\tau_{k}})|\mathcal{Y}_{\tau_{k}}] - \frac{\int_{B_{R}}\varphi(x)\tilde{u}_{k+1}(\tau_{k},x)dx}{\int_{B_{R}}\tilde{u}_{k+1}(\tau_{k},x)dx}\biggr|\\
		&=\tilde{E}\biggl[\tilde{Z}_{\tau_{k}}\biggl|E[\varphi(X_{\tau_{k}})|\mathcal{Y}_{\tau_{k}}] - \frac{\int_{B_{R}}\varphi(x)\tilde{u}_{k+1}(\tau_{k},x)dx}{\int_{B_{R}}\tilde{u}_{k+1}(\tau_{k},x)dx}\biggr|\biggr]\\
		&= \tilde{E}\biggl[\tilde{Z}_{\tau_{k}}\biggl|\frac{\tilde{E}[\varphi(X_{\tau_{k}})\tilde{Z}_{\tau_{k}}|\mathcal{Y}_{\tau_{k}}]}{\tilde{E}[\tilde{Z}_{\tau_{k}}|\mathcal{Y}_{\tau_{k}}]} - \frac{\int_{B_{R}}\varphi(x)\tilde{u}_{k+1}(\tau_{k},x)dx}{\int_{B_{R}}\tilde{u}_{k+1}(\tau_{k},x)dx}\biggr|\biggr]\\
		&= \tilde{E}\biggl[\tilde{E}[\tilde{Z}_{\tau_{k}}|\mathcal{Y}_{\tau_{k}}]\biggl|\frac{\tilde{E}[\varphi(X_{\tau_{k}})\tilde{Z}_{\tau_{k}}|\mathcal{Y}_{\tau_{k}}]}{\tilde{E}[\tilde{Z}_{\tau_{k}}|\mathcal{Y}_{\tau_{k}}]} - \frac{\int_{B_{R}}\varphi(x)\tilde{u}_{k+1}(\tau_{k},x)dx}{\int_{B_{R}}\tilde{u}_{k+1}(\tau_{k},x)dx}\biggr|\biggr]\\
		&\leq  \tilde{E}\biggl[\tilde{E}[\tilde{Z}_{\tau_{k}}|\mathcal{Y}_{\tau_{k}}]\biggl|\frac{\tilde{E}[\varphi(X_{\tau_{k}})\tilde{Z}_{\tau_{k}}|\mathcal{Y}_{\tau_{k}}]}{\tilde{E}[\tilde{Z}_{\tau_{k}}|\mathcal{Y}_{\tau_{k}}]} - \frac{\int_{B_{R}}\varphi(x)\tilde{u}_{k+1}(\tau_{k},x)dx}{\tilde{E}[\tilde{Z}_{\tau_{k}}|\mathcal{Y}_{\tau_{k}}]}\biggr|\biggr]\\
		&\quad + \tilde{E}\biggl[\tilde{E}[\tilde{Z}_{\tau_{k}}|\mathcal{Y}_{\tau_{k}}]\biggl|\frac{\int_{B_{R}}\varphi(x)\tilde{u}_{k+1}(\tau_{k},x)dx}{\tilde{E}[\tilde{Z}_{\tau_{k}}|\mathcal{Y}_{\tau_{k}}]} - \frac{\int_{B_{R}}\varphi(x)\tilde{u}_{k+1}(\tau_{k},x)dx}{\int_{B_{R}}\tilde{u}_{k+1}(\tau_{k},x)dx}\biggr|\biggr]\\
		&\leq  \tilde{E}\biggl[\biggl|\tilde{E}[\varphi(X_{\tau_{k}})\tilde{Z}_{\tau_{k}}|\mathcal{Y}_{\tau_{k}}]-\int_{B_{R}}\varphi(x)\tilde{u}_{k+1}(\tau_{k},x)dx\biggr|\biggr]\\
		&\quad + \tilde{E}\biggl[\frac{\int_{B_{R}}|\varphi(x)|\tilde{u}_{k+1}(\tau_{k},x)dx}{\int_{B_{R}}\tilde{u}_{k+1}(\tau_{k},x)dx}\biggl|\tilde{E}[\tilde{Z}_{\tau_{k}}|\mathcal{Y}_{\tau_{k}}] - \int_{B_{R}}\tilde{u}_{k+1}(\tau_{k},x)dx\biggr|\biggr]\\
		&= \tilde{E}\biggl[\biggl|\int_{\mathbb{R}^{d}}\varphi(x)\sigma(\tau_{k},x)dx - \int_{B_{R}}\varphi(x)\tilde{u}_{k+1}(\tau_{k},x)dx\biggr|\biggr]\\
		&\quad + \tilde{E}\biggl[\frac{\int_{B_{R}}|\varphi(x)|\tilde{u}_{k+1}(\tau_{k},x)dx}{\int_{B_{R}}\tilde{u}_{k+1}(\tau_{k},x)dx}\biggl|\int_{\mathbb{R}^{d}}\sigma(\tau_{k},x)dx - \int_{B_{R}}\tilde{u}_{k+1}(\tau_{k},x)dx\biggr|\biggr]\\
		&\triangleq I_{1} + I_{2}.
	\end{align*}
	where we use the fact that $\tilde{u}_{k+1}(\tau_{k},x)$ is $\mathcal{Y}_{\tau_{k}}$-measurable and for integrable, $\mathcal{Y}_{\tau_{k}}$-measurable random variable $V$,  
	\begin{equation}
		\tilde{E}[\tilde{Z}_{\tau_{k}}V] = \tilde{E}[\tilde{E}[\tilde{Z}_{\tau_{k}}|\mathcal{Y}_{\tau_{k}}]V].
	\end{equation}
	
	Therefore, the remaining task for us is to estimate the two error terms $I_{1}$ and $I_{2}$, and to show that $I_{1}$ and $I_{2}$ can be arbitrarily small with sufficiently large $R> 0$ and sufficiently small $\delta > 0$.
	
	Firstly, for the estimation of 
	\begin{equation}
		I_{1} = \tilde{E}\biggl[\biggl|\int_{\mathbb{R}^{d}}\varphi(x)\sigma(\tau_{k},x)dx - \int_{B_{R}}\varphi(x)\tilde{u}_{k+1}(\tau_{k},x)dx\biggr|\biggr],
	\end{equation}
	we would like to utilize an intermediate function $\sigma_{R}(t,x)$, $(t,x)\in[0,T]\times B_{R}$, which is the solution of IBV problem of the DMZ equation \eqref{eq:2} and will be introduced in \eqref{eq:13a} in Section \ref{sec:4}. And we have
	\begin{align}
		I_{1}\leq &\tilde{E}\int_{|x|\geq R}|\varphi(x)|\sigma(\tau_{k},x)dx + \tilde{E}\biggl[\biggl|\int_{B_{R}}\varphi(x)\sigma(\tau_{k},x)dx - \int_{B_{R}}\varphi(x)\sigma_{R}(\tau_{k},x)dx\biggr|\biggr] \notag\\
		&+ \tilde{E}\biggl[\biggl|\int_{B_{R}}\varphi(x)\sigma_{R}(\tau_{k},x)dx - \int_{B_{R}}\varphi(x)\tilde{u}_{k+1}(\tau_{k},x)dx\biggr|\biggr]\notag\\
		\leq &\tilde{E}\int_{|x|\geq R}|\varphi(x)|\sigma(\tau_{k},x)dx + \tilde{E}\int_{B_{R}}|\varphi(x)|\cdot|\sigma(\tau_{k},x) - \sigma_{R}(\tau_{k},x)|dx \notag\\
		&+ \tilde{E}\int_{B_{R}}|\varphi(x)|\cdot|\sigma_{R}(\tau_{k},x)-\tilde{u}_{k+1}(\tau_{k},x)|dx\notag\\
		\leq &\tilde{E}\int_{|x|\geq R}|\varphi(x)|\sigma(\tau_{k},x)dx + L(1+R^{2m})\tilde{E}\int_{B_{R}}|\sigma(\tau_{k},x) - \sigma_{R}(\tau_{k},x)|dx \notag\\
		&+ L(1+R^{2m})\tilde{E}\int_{B_{R}}|\sigma_{R}(\tau_{k},x)-\tilde{u}_{k+1}(\tau_{k},x)|dx.
		\label{eq:29}
	\end{align}
	
	For the estimation of 
	\begin{equation}
		I_{2} = \tilde{E}\biggl[\frac{\int_{B_{R}}|\varphi(x)|\tilde{u}_{k+1}(\tau_{k},x)dx}{\int_{B_{R}}\tilde{u}_{k+1}(\tau_{k},x)dx}\biggl|\int_{\mathbb{R}^{d}}\sigma(\tau_{k},x)dx - \int_{B_{R}}\tilde{u}_{k+1}(\tau_{k},x)dx\biggr|\biggr],
	\end{equation}
	since in the closed ball $B_{R}$, $|\varphi(x)|\leq L(1+R^{2m})$,
	\begin{equation}
		\frac{\int_{B_{R}}|\varphi(x)|\tilde{u}_{k+1}(\tau_{k},x)dx}{\int_{B_{R}}\tilde{u}_{k+1}(\tau_{k},x)dx}\leq L(1+R^{2m})\frac{\int_{B_{R}}\tilde{u}_{k+1}(\tau_{k},x)dx}{\int_{B_{R}}\tilde{u}_{k+1}(\tau_{k},x)dx} = L(1+R^{m}).
	\end{equation}
	and thus,
	\begin{equation}
		\begin{aligned} 
			I_{2}&\leq L(1+R^{2m})\tilde{E}\biggl[\biggl|\int_{\mathbb{R}^{d}}\sigma(\tau_{k},x)dx - \int_{B_{R}}\tilde{u}_{k+1}(\tau_{k},x)dx\biggr|\biggr]\\
			&\leq L(1+R^{2m})\biggl(\tilde{E}\int_{|x|\geq R}\sigma(\tau_{k},x)dx + \tilde{E}\biggl[\biggl|\int_{B_{R}}\sigma(\tau_{k},x)dx - \int_{B_{R}}\sigma_{R}(\tau_{k},x)dx\biggr|\biggr]\biggr)\\
			&\quad+ L(1+R^{2m})\tilde{E}\biggl[\biggl|\int_{B_{R}}\sigma_{R}(\tau_{k},x)dx - \int_{B_{R}}\tilde{u}_{k+1}(\tau_{k},x)dx\biggr|\biggr]\\
			&\leq L(1+R^{2m})\biggl(\tilde{E}\int_{|x|\geq R}\sigma(\tau_{k},x)dx + \tilde{E}\int_{B_{R}}|\sigma(\tau_{k},x)-\sigma_{R}(\tau_{k},x)|dx\biggr)\\
			&\quad+ L(1+R^{2m})\tilde{E}\int_{B_{R}}|\sigma_{R}(\tau_{k},x)-\tilde{u}_{k+1}(\tau_{k},x)|dx.
		\end{aligned} 
		\label{eq:32}
	\end{equation}
	Combining \eqref{eq:29} and \eqref{eq:32}, we have
	\begin{equation}
		\begin{aligned}
			E&\biggl|E[\varphi(X_{\tau_{k}})|\mathcal{Y}_{\tau_{k}}] - \frac{\int_{B_{R}}\varphi(x)\tilde{u}_{k+1}(\tau_{k},x)dx}{\int_{B_{R}}\tilde{u}_{k+1}(\tau_{k},x)dx}\biggr|\leq I_{1} + I_{2}\\
			&\leq \tilde{E}\int_{|x|\geq R}|\varphi(x)|\sigma(\tau_{k},x)dx + L(1+R^{m})\tilde{E}\int_{|x|\geq R}\sigma(\tau_{k},x)dx\\ &\quad+2L(1+R^{2m})\tilde{E}\int_{B_{R}}|\sigma(\tau_{k},x) - \sigma_{R}(\tau_{k},x)|dx \\
			&\quad+ 2L(1+R^{2m})\tilde{E}\int_{B_{R}}|\sigma_{R}(\tau_{k},x)-\tilde{u}_{k+1}(\tau_{k},x)|dx.
		\end{aligned}
		\label{eq:33}
	\end{equation}
	
	According to Theorem \ref{thm:1b} in Section \ref{sec:4}, for every $n\in\mathbb{N}$, there exists $C_{1}>0$, which depends on $d$, $m$, $n$, $L$, $T$, such that 
	\begin{equation}
		\begin{aligned} 
			&\tilde{E}\int_{|x|\geq R}\sigma(\tau_{k},x)dx \leq \frac{C_{1}}{1+R^{2n}}\int_{\mathbb{R}^{d}}|x|^{2n}\sigma_{0}(x)dx\\
			&\tilde{E}\int_{|x|\geq R}|\varphi(x)|\sigma(\tau_{k},x)dx\leq \frac{C_{1}}{1+R^{2n}}\int_{\mathbb{R}^{d}}|x|^{2(m+n)}\sigma_{0}(x)dx
		\end{aligned} 
		\label{eq:34}
	\end{equation}
	
	Therefore, for every $\epsilon > 0$, with Assumption \textbf{(A3)} for the initial distribution $\sigma_{0}$, as long as we take $n > m$, there exists $R_{1} > 0$, such that
	\begin{equation}
		\begin{aligned}
			\tilde{E}&\int_{|x|\geq R_{1}}|\varphi(x)|\sigma(\tau_{k},x)dx + L(1+R_{1}^{m})\tilde{E}\int_{|x|\geq R_{1}}\sigma(\tau_{k},x)dx\\
			&\leq \frac{C_{1}(1+R_{1}^{2m})}{1+R_{1}^{2n}}\int_{\mathbb{R}^{d}}|x|^{2n}\sigma_{0}(x)dx + \frac{C_{1}}{1+R_{1}^{2n}}\int_{\mathbb{R}^{d}}|x|^{2(m+n)}\sigma_{0}(x)dx < \frac{\epsilon}{3}
		\end{aligned}
		\label{eq:35}
	\end{equation}
	
	According to Theorem \ref{thm:2} in Section \ref{sec:5}, there exists $C_{2} > 0$, which depends on $d,n,L,T$, such that
	\begin{equation}
		\tilde{E}\int_{B_{R}}|\sigma(\tau_{k},x) - \sigma_{R}(\tau_{k},x)|dx\leq \frac{C_{2}}{1+R^{2n}}.
		\label{eq:36}
	\end{equation}
	
	Therefore, as long as $n>m$, there exists $R_{2}>0$, such that
	\begin{equation}
		\begin{aligned} 
			2L(1+R_{2}^{2m})\tilde{E}\int_{B_{R_{2}}}|\sigma(\tau_{k},x) - \sigma_{R_{2}}(\tau_{k},x)|dx\leq\frac{2C_{2}L(1+R_{2}^{2m})}{1+R_{2}^{2n}}<\frac{\epsilon}{3} 
		\end{aligned}
		\label{eq:37}
	\end{equation}
	
	Let us choose $R = \max\{R_{1},R_{2}\}$, and for this particular $R$, according to Theorem \ref{thm:1a} in Section \ref{sec:7}, there exists a time step $\delta > 0$, such that
	\begin{equation}
		\tilde{E}\int_{B_{R}}|\sigma_{R}(\tau_{k},x) - \tilde{u}_{k+1}(\tau_{k},x)|dx <\frac{\epsilon}{6L(1+R^{2m})}. 
		\label{eq:38}
	\end{equation}
	and thus,
	\begin{equation}
		2L(1+R^{2m})\tilde{E}\int_{B_{R}}|\sigma_{R}(\tau_{k},x)-\tilde{u}_{k+1}(\tau_{k},x)|dx\leq \frac{2L(1+R^{2m})\epsilon}{6L(1+R^{2m})}<\frac{\epsilon}{3}.
		\label{eq:39}
	\end{equation}
	
	Take $\eqref{eq:35}$, $\eqref{eq:37}$ and $\eqref{eq:39}$ back to \eqref{eq:33}, and we obtain the desired result, that is, we have found $R > 0$ and $\delta > 0$, such that 
	\begin{equation}
		E\biggl|E[\varphi(X_{\tau_{k}})|\mathcal{Y}_{\tau_{k}}] - \frac{\int_{B_{R}}\varphi(x)\tilde{u}_{k+1}(\tau_{k},x)dx}{\int_{B_{R}}\tilde{u}_{k+1}(\tau_{k},x)dx}\biggr| < \epsilon.
	\end{equation}
\end{proof}
\section{Estimation of the density outside the ball \texorpdfstring{$B_{R}$}{BR}}
\label{sec:4}
In this section, we will provide an estimation of the value of the unnormalized conditional probability density $\sigma(t,x)$ outside a ball $B_{R}\subset\mathbb{R}^{d}$, with $R\gg1$ large enough. 

Especially, we will show that almost all the density of $\sigma(t,x)$ is contained in the closed ball $B_{R}$, and the estimations \eqref{eq:34} in the proof of Theorem \ref{thm:4} in Section \ref{sec:3} holds with Assumptions \textbf{(A1)} to \textbf{(A4)}.
\begin{theorem}
	With Assumptions \textbf{(A1)} to \textbf{(A4)}, there exists a constant $C > 0$ which only depends on $T$, $L$, $d$, $m$ and $n$, such that
	\begin{equation}
		\sup\limits_{0\leq t\leq T}\tilde{E}\int_{|x|\geq R}\sigma(t,x)dx\leq \frac{C}{1+R^{2n}}\int_{\mathbb{R}^{d}}|x|^{2n}\sigma_{0}(x)dx,
	\end{equation}
	and
	\begin{equation}
		\sup\limits_{0\leq t\leq T}\tilde{E}\int_{|x|\geq R}|\varphi(x)|\sigma(t,x)dx\leq \frac{C}{1+R^{2n}}\int_{\mathbb{R}^{d}}|x|^{2(m+n)}\sigma_{0}(x)dx
	\end{equation}
	holds for all $R > 0$. 
	\label{thm:1b}
\end{theorem}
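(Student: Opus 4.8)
The plan is to avoid working with the stochastic PDE \eqref{eq:2} directly and instead transfer the whole estimate to a moment bound for the state process $X$ via the change of measure. The starting observation is that, for any test function $\psi$ for which the quantities make sense, taking $\tilde{E}$ of the identity $\int_{\mathbb{R}^{d}}\psi(x)\sigma(t,x)dx = \tilde{E}[\tilde{Z}_{t}\psi(X_{t})\mid\mathcal{Y}_{t}]$ and using the tower property together with $E[U]=\tilde{E}[\tilde{Z}_{t}U]$ gives
\begin{equation*}
	\tilde{E}\int_{\mathbb{R}^{d}}\psi(x)\sigma(t,x)dx = \tilde{E}[\tilde{Z}_{t}\psi(X_{t})] = E[\psi(X_{t})].
\end{equation*}
Thus every spatial integral against $\sigma(t,\cdot)$ that must be controlled is literally an ordinary expectation of a function of $X_{t}$ under $P$, and the tail estimates reduce to finite-moment bounds for $X_{t}$ that are uniform on $[0,T]$.

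Second, I would extract the tail with a polynomial weight. On $\{|x|\geq R\}$ one has $1\leq (1+|x|^{2n})/(1+R^{2n})$, so $\int_{|x|\geq R}\sigma(t,x)dx \leq (1+R^{2n})^{-1}\int_{\mathbb{R}^{d}}(1+|x|^{2n})\sigma(t,x)dx$, and taking $\tilde{E}$ turns the right-hand side into $(1+E|X_{t}|^{2n})/(1+R^{2n})$ by the identity above. For the second inequality I would first use Assumption \textbf{(A4)}, $|\varphi(x)|\leq L(1+|x|^{2m})$, and then the same weighting: on $\{|x|\geq R\}$, $(1+|x|^{2m})\leq (1+|x|^{2m})(1+|x|^{2n})/(1+R^{2n})\leq C(1+|x|^{2(m+n)})/(1+R^{2n})$, which after taking $\tilde{E}$ produces the factor $(1+E|X_{t}|^{2(m+n)})/(1+R^{2n})$. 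So both claims follow once $\sup_{0\leq t\leq T}E|X_{t}|^{2p}$ is bounded in terms of the corresponding moment of $\sigma_{0}$, for $p=n$ and $p=m+n$ respectively.

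Third, the moment estimate itself is where the coefficient assumptions enter. I would apply It\^o's formula to $\phi_{p}(x)=(1+|x|^{2})^{p}$ along $dX_{t}=f(X_{t})dt+g(X_{t})dV_{t}$. Using \textbf{(A1)} --- $f$ Lipschitz, hence of linear growth, and $a=gg^{\top}$ bounded --- a direct computation shows $\mathcal{L}\phi_{p}(x)\leq C\phi_{p}(x)$ with $C=C(d,p,L)$, since the drift contributes a term of order $|x|\cdot|x|(1+|x|^{2})^{p-1}\lesssim\phi_{p}$ and the diffusion only a lower-order term. To make this rigorous I would localize with $\tau_{M}=\inf\{t:|X_{t}|\geq M\}$ so that the stochastic integral is a genuine martingale of zero expectation, obtain $E\phi_{p}(X_{t\wedge\tau_{M}})\leq E\phi_{p}(X_{0})+C\int_{0}^{t}E\phi_{p}(X_{s\wedge\tau_{M}})ds$, pass to the limit $M\to\infty$ by Fatou's lemma, and close with Gronwall's inequality to get $\sup_{0\leq t\leq T}E\phi_{p}(X_{t})\leq e^{CT}E\phi_{p}(X_{0})$. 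Assumption \textbf{(A3)} guarantees $E\phi_{p}(X_{0})=\int_{\mathbb{R}^{d}}(1+|x|^{2})^{p}\sigma_{0}(x)dx<\infty$, and since $\int\sigma_{0}=1$ this is controlled by the $2p$-th moment of $\sigma_{0}$; the uniform-in-$t$ factor $e^{CT}$ yields the supremum over $[0,T]$ and the stated dependence of $C$ on $T,L,d,m,n$.

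The routine parts are the change of measure and the weighting; the substantive step is the SDE moment bound, and that is where I expect the main obstacle. The delicate points are: (i) verifying $\mathcal{L}\phi_{p}\leq C\phi_{p}$ uniformly in $x$ using only the boundedness of $a$ and its derivatives and the linear growth of $f$ extracted from its Lipschitz property in \textbf{(A1)}; and (ii) the localization needed to treat the stochastic integral as a true martingale \emph{before} the a priori moment bound is available, so that It\^o's formula and Gronwall may be applied without circularity. A minor bookkeeping point is that the argument naturally yields $1+\int_{\mathbb{R}^{d}}|x|^{2p}\sigma_{0}(x)dx$, and since $\sigma_{0}$ is a probability density the additive ``$1$'' is harmlessly absorbed into the constant $C$ to recover the stated form.
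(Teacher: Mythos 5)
Your argument is correct in substance, but it takes a genuinely different route from the paper's. The paper never leaves the PDE side: it works with the IBV problem \eqref{eq:13a} for $\sigma_{R}$ on $B_{R}$, forms the weighted functional $\Phi(t)=\int_{B_{R}}(1+|x|^{2n})\sigma_{R}(t,x)\,dx$, integrates by parts via Gauss--Green, checks that the boundary terms have a favorable sign (using $\sigma_{R}\geq 0$ and the zero boundary condition), bounds the resulting coefficient uniformly by \textbf{(A1)}, kills the $dY$-integral by taking $\tilde{E}$ (since $Y$ is a $\tilde{P}$-Brownian motion), applies Gronwall, and only then lets $R\to\infty$ to reach $\sigma$; the Chebyshev-type weighting at the end is the same as yours. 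You instead push the whole problem through the Kallianpur--Striebel representation, $\tilde{E}\int\psi(x)\sigma(t,x)\,dx=\tilde{E}[\tilde{Z}_{t}\psi(X_{t})]=E[\psi(X_{t})]$, and reduce both claims to the classical SDE moment bound $\sup_{0\leq t\leq T}E(1+|X_{t}|^{2})^{p}\leq e^{CT}E(1+|X_{0}|^{2})^{p}$, proved by It\^o's formula, localization and Gronwall. What your route buys is economy: no SPDE manipulations, no boundary-term analysis, no passage $R\to\infty$, and a transparent identification of where \textbf{(A1)} and \textbf{(A3)} enter. What the paper's route buys is self-containedness at the PDE level: it bounds the solution of the DMZ equation as a PDE object, without using that $\sigma(t,\cdot)$ is the density of $\rho_{t}$, without needing $Z_{t}$ to be a true martingale (so that Girsanov and Kallianpur--Striebel are available), and without extending that representation to unbounded test functions. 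To make your version airtight you should therefore add two small steps: justify the identity for the unbounded weights $\psi(x)=1+|x|^{2p}$ by truncating ($\psi\wedge N$) and applying monotone convergence on both sides, and note explicitly that the martingale property of $Z_{t}$ underlying $E[U]=\tilde{E}[\tilde{Z}_{t}U]$ is part of the paper's standing framework rather than a consequence of \textbf{(A1)}--\textbf{(A4)}. Finally, your closing bookkeeping remark deserves a caveat: the additive $1$ in $1+E|X_{t}|^{2n}$ cannot be absorbed into a constant independent of $\sigma_{0}$ (consider $\sigma_{0}$ concentrated near the origin, where $\int|x|^{2n}\sigma_{0}\,dx$ is arbitrarily small), so the honest form of the conclusion has $\int_{\mathbb{R}^{d}}(1+|x|^{2n})\sigma_{0}(x)\,dx$ on the right-hand side --- but this is exactly what the paper's own proof produces as well, so it is a blemish in the statement shared by both arguments, not a defect of yours relative to the paper.
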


\begin{proof}[Proof of Theorem \ref{thm:1b}]
	We first consider the following IBV problem on the ball $B_{R}$:
	\begin{equation}
		\left\{\begin{aligned}
			&d\sigma_{R}(t,x) = \mathcal{L}\sigma_{R}(t,x)dt + \sum_{j=1}^{d}h_{j}(x)\sigma_{R}(t,x)dY_{t}^{j},\ (t,x)\in (0,T]\times B_{R};\\
			&\sigma_{R}(0,x) = \sigma_{0,R}(x)\triangleq\sigma_{0}(x)\cdot \mathscr{S}_{R}(x),\ x\in B_{R};\\
			&\sigma_{R}(t,x) = 0, \ (t,x)\in[0,T]\times \partial B_{R}.
		\end{aligned}\right.
		\label{eq:13a}
	\end{equation} 
	where $\mathscr{S}_{R}(x)$ is the $C^{\infty}$ function defined in \eqref{eq:19}, such that the initial value is compatible with the boundary conditions.
	
	Let $\psi(x) = \log\left(1+|x|^{2n}\right)$ and define
	\begin{equation}
		\Phi(t) = \int_{B_{R}}e^{\psi(x)}\sigma_{R}(t,x)dx.
	\end{equation}
	Then, according to the IBV problem \eqref{eq:13a} satisfied by the function $\sigma_{R}(t,x)$, we have
	\begin{equation}
		\begin{aligned}
			d\Phi(t) = & \biggl[\frac{1}{2}\sum_{i,j=1}^{d}\int_{B_{R}}\frac{\partial^{2}}{\partial x_{i}\partial x_{j}}\left[\left(a^{ij}(x)\right)\sigma_{R}(t,x)\right]e^{\psi(x)}dx \\
			&- \sum_{i=1}^{d}\int_{B_{R}}\frac{\partial}{\partial x_{i}}\left(f_{i}(x)\sigma_{R}(t,x)\right)e^{\psi(x)}dx\biggr]dt\\
			& + \sum_{j=1}^{d}\biggl[\int_{B_{R}}e^{\psi(x)}h_{j}(x)\sigma_{R}(t,x)dx
			\biggr]dY_{t}^{j}\\
			\triangleq& \left[I_{1}(t) - I_{2}(t)\right]dt + \sum_{j=1}^{d}I_{3,j}(t) dY_{t}^{j}.
		\end{aligned}
	\end{equation}
	By the Gauss-Green formula, we have
	\begin{equation}
		\begin{aligned} 
			I_{1}(t) = \frac{1}{2}\sum_{i,j=1}^{d}\biggl[&\int_{B_{R}}e^{\psi(x)}\left(\frac{\partial\psi(x)}{\partial x_{i}}\frac{\partial\psi(x)}{\partial x_{j}} + \frac{\partial^{2}\psi(x)}{\partial x_{i}\partial x_{j}}\right)a^{ij}(x)\sigma_{R}(t,x)dx\\
			&-\int_{B_{R}}\frac{\partial}{\partial x_{j}}\left(e^{\psi(x)}\frac{\partial\psi(x)}{\partial x_{i}}a^{ij}(x)\sigma_{R}(t,x)\right)dx\\
			&+ \int_{B_{R}}\frac{\partial}{\partial x_{i}}\left(e^{\psi(x)}\frac{\partial}{\partial x_{j}}\left[a^{ij}(x)\sigma_{R}(t,x)\right]\right)dx\biggr]\\
			= \frac{1}{2}\sum_{i,j=1}^{d}&\int_{B_{R}}e^{\psi(x)}\left(\frac{\partial\psi(x)}{\partial x_{i}}\frac{\partial\psi(x)}{\partial x_{j}} + \frac{\partial^{2}\psi(x)}{\partial x_{i}\partial x_{j}}\right)a^{ij}(x)\sigma_{R}(t,x)dx\\
			&-\int_{\partial B_{R}}\vec{\mathfrak{M}}_{1}(t,x)\cdot\vec{\mathfrak{n}}dS + \int_{\partial B_{R}}\vec{\mathfrak{M}}_{2}(t,x)\cdot \vec{\mathfrak{n}}dS,
		\end{aligned}
	\end{equation}
	where $\vec{\mathfrak{n}}$ is the unit outward normal vector of $\partial B_{R}$, $dS$ denotes the measure on $\partial B_{R}$, 
	\begin{equation}
		\vec{\mathfrak{M}}_{i}(t,x) = \left(\mathfrak{M}_{i,1}(t,x),\cdots,\mathfrak{M}_{i,d}(t,x)\right),\quad i = 1,2,
	\end{equation}
	and 
	\begin{equation}
		\begin{aligned} 
			&\mathfrak{M}_{1,j}(t,x) = \frac{1}{2}\sum_{i=1}^{d}e^{\psi(x)}\frac{\partial\psi(x)}{\partial x_{i}}a^{ij}(x)\sigma_{R}(t,x),\quad j=1,\cdots,d,\\
			&\mathfrak{M}_{2,i}(t,x) = \frac{1}{2}\sum_{j=1}^{d}e^{\psi(x)}\frac{\partial}{\partial x_{j}}\left[a^{ij}(x)\sigma_{R}(t,x)\right],\quad i=1,\cdots,d.
		\end{aligned}
	\end{equation} 
	Since $\sigma_{R}(t,x)\equiv 0$, $\forall (t,x)\in [0,T]\times\partial B_{R}$, $\mathfrak{M}_{1,j}(t,x)\equiv 0$ on $[0,T]\times\partial B_{R}$ and 
	\begin{equation}
		\int_{\partial B_{R}}\vec{\mathfrak{M}}_{1}(t,x)\cdot\vec{\mathfrak{n}}dS = 0.
	\end{equation} 
	Moreover, we have
	\begin{equation}
		\nabla \sigma_{R} = \left(\frac{\partial \sigma_{R}}{\partial x_{1}},\cdots,\frac{\partial \sigma_{R}}{\partial x_{d}}\right)= -c\ \vec{\mathfrak{n}},\quad \text{on}\ \partial B_{R},
	\end{equation}
	where $c(x)> 0$ is a continuous function on $\partial B_{R}$, because $\sigma_{R}\geq 0$ and $\sigma_{R}|_{\partial B_{R}} \equiv 0$. 
	
	Therefore,
	\begin{equation}
		\begin{aligned} 
			\vec{\mathfrak{M}}_{2}(t,x)\cdot \vec{\mathfrak{n}} = &-\vec{\mathfrak{M}}_{2}(t,x)\cdot \nabla \sigma_{R}\\
			=& -e^{\psi(x)}\frac{1}{2}\sum_{i,j=1}^{d}a^{ij}(x)\frac{\partial \sigma_{R}}{\partial x_{j}}\frac{\partial \sigma_{R}}{\partial x_{i}} \\
			&- e^{\psi(x)}\sigma_{R}\frac{1}{2}\sum_{i,j=1}^{d}\frac{\partial}{\partial x_{j}}a^{ij}(x)\frac{\partial \sigma_{R}}{\partial x_{i}}\\
			= & -e^{\psi(x)}\frac{1}{2}\sum_{i,j=1}^{d}a^{ij}(x)\frac{\partial \sigma_{R}}{\partial x_{j}}\frac{\partial \sigma_{R}}{\partial x_{i}}\leq 0,\quad \text{on}\ \partial B_{R},
		\end{aligned}
	\end{equation}
	where the last inequality holds because $a(x) = g(x)g(x)^{\top}$ is positive semi-definite. Thus,
	\begin{equation}
		I_{1}(t) \leq  \frac{1}{2}\sum_{i,j=1}^{d}\int_{B_{R}}e^{\psi(x)}\left(\frac{\partial\psi(x)}{\partial x_{i}}\frac{\partial\psi(x)}{\partial x_{j}} + \frac{\partial^{2}\psi(x)}{\partial x_{i}\partial x_{j}}\right)a^{ij}(x)\sigma_{R}(t,x)dx.
	\end{equation}
	Similarly, 
	\begin{align}
		&I_{2}(t) = -\sum_{i=1}^{d}\int_{B_{R}}f_{i}(x)\sigma_{R}(t,x)e^{\psi(x)}\frac{\partial\psi(x)}{\partial x_{i}}dx.
	\end{align}
	Therefore,
	\begin{equation}
		d\Phi(t)\leq \left(\int_{B_{R}}\mathfrak{F}(x)e^{\psi(x)}\sigma_{R}(t,x)dx\right)dt + \sum_{j=1}^{d}\left(\int_{B_{R}}h_{j}(x)e^{\psi(x)}u(t,x)dx\right)dY_{t}^{j},
		\label{eq:14a}
	\end{equation}
	where
	\begin{align}
		&\mathfrak{F}(x) = \frac{1}{2}\sum_{i,j=1}^{d}\left(\frac{\partial\psi(x)}{\partial x_{i}}\frac{\partial\psi(x)}{\partial x_{j}} + \frac{\partial^{2}\psi(x)}{\partial x_{i}\partial x_{j}}\right)a^{ij}(x) + \sum_{i=1}^{d}f_{i}(x)\frac{\partial\psi(x)}{\partial x_{i}}\label{eq:40}.
	\end{align}
	
	Since $\psi(x) = \log\left(1+|x|^{2n}\right)$, then
	\begin{equation}
		\frac{\partial\psi}{\partial x_{i}} = \frac{2n|x|^{2n-2}x_{i}}{1+|x|^{2n}},\ \frac{\partial^{2}\psi}{\partial x_{i}\partial x_{j}} = \frac{4nx_{i}x_{j}|x|^{2n-4}\left((n-1)-|x|^{2n}\right)}{\left(1+|x|^{2n}\right)^{2}} + \frac{2n|x|^{2n-2}\mathscr{\delta}_{ij}}{1+|x|^{2n}},
	\end{equation}
	where $\mathcal{\delta}_{ij}$ is the Kronecker's symbol, with $\mathcal{\delta}_{ij} = 1$, if $i=j$, and $\mathcal{\delta}_{ij} = 0$ otherwise.
	
	Notice that
	\begin{equation}
		\left|\frac{\partial\psi}{\partial x_{i}}\right|\leq 2n,\ \left|\frac{\partial^{2}\psi}{\partial x_{i}\partial x_{j}}\right|\leq 4n^{2}+2n.
	\end{equation}
	With the assumption that $|a^{ij}(x)|\leq L$, we have
	\begin{equation}
		|\mathfrak{F}(x)|\leq d^{2}(4n^{2}+n)L + 2n\sum_{i=1}^{d}\frac{|f_{i}(x)x_{i}|\cdot|x|^{2n-2}}{1+|x|^{2n}},\ \forall x\in\mathbb{R}^{d}.
	\end{equation}
	
	Because $f(x)$ is Lipschitz continuous according to (\textbf{A1}). 
	\begin{equation}
		|f(x)|\leq L|x| + |f(0)|,\quad \forall x\in \mathbb{R}^{d}.
	\end{equation}
	Therefore,
	\begin{equation}
		\begin{aligned} 
			|\mathfrak{F}(x)|&\leq d^{2}(4n^{2}+n)L + \frac{2n|x|^{2n-2}}{1+|x|^{2n}}\sum_{i=1}^{d}\frac{|f_{i}(x)|^{2}+|x_{i}|^{2}}{2}\\
			&\leq d^{2}(4n^{2}+n)L + \frac{n|x|^{2n}+ n|x|^{2n-2}|f(x)|^{2}}{1+|x|^{2n}}\\
			&\leq d^{2}(4n^{2}+n)L + n(L^{2}+1) + n|f(0)|^{2} + 2nL|f(0)|,\ \forall x\in\mathbb{R}^{d}.
		\end{aligned} 
	\end{equation}
	Let us denote by $M(n,d,L)$ the above upper bound of $|\mathfrak{F}(x)|$:
	\begin{equation}
		M(n,d,L) := d^{2}(4n^{2}+n)L + n(L^{2}+1) + n|f(0)|^{2} + 2nL|f(0)|,
	\end{equation}
	which is a constant that depends on $n$, $d$ and $L$, but does not depend on $R$. 
	
	Take expectation with respect to the reference probability measure $\tilde{P}$, we obtain
	\begin{equation}
		\frac{d}{dt}\tilde{E}\Phi(t)\leq M(n,d,L)\tilde{E}\Phi(t). 
	\end{equation}
	Here we use the fact that $Y_{t}$ is a Brownian motion with respect to $\tilde{P}$.
	
	According to the Gronwall's inequality, we have
	\begin{equation}
		\begin{aligned} 
			\sup\limits_{0\leq t\leq T}\tilde{E}\int_{B_{R}}\left(1+|x|^{2n}\right)\sigma_{R}(t,x)dx&\leq e^{M(n,d,L)T}\int_{B_{R}}\left(1+|x|^{2n}\right)\sigma_{0,R}(x)dx\\
			&\leq e^{M(n,d,L)T}\int_{B_{R}}\left(1+|x|^{2n}\right)\sigma_{0}(x)dx.
		\end{aligned}
	\end{equation}
	
	Let $R$ tends to infinity, and we have
	\begin{equation}
		\sup\limits_{0\leq t\leq T}\tilde{E}\int_{\mathbb{R}^{d}}\left(1+|x|^{2n}\right)\sigma(t,x)dx\leq e^{M(n,d,L)T}\int_{\mathbb{R}^{d}}\left(1+|x|^{2n}\right)\sigma_{0}(x)dx. 
	\end{equation}
	Therefore,
	\begin{equation}
		\begin{aligned} 
			\sup\limits_{0\leq t\leq T} \tilde{E}\int_{|x|\geq R}\sigma(t,x)dx&\leq \frac{1}{1+R^{2n}}\sup\limits_{0\leq t\leq T}\tilde{E}\int_{|x|\geq R}\left(1+|x|^{2n}\right)\sigma(t,x)dx\\
			&\leq \frac{1}{1+R^{2n}}\sup\limits_{0\leq t\leq T}\tilde{E}\int_{\mathbb{R}^{d}}\left(1+|x|^{2n}\right)\sigma(t,x)dx\\
			&\leq \frac{e^{M(n,d,L)T}}{1+ R^{2n}}\int_{\mathbb{R}^{d}}\left(1+|x|^{2n}\right)\sigma_{0}(x)dx.
		\end{aligned}
	\end{equation}
	Moreover, with condition \textbf{(A4)},
	\begin{equation}
		\begin{aligned} 
			\sup\limits_{0\leq t\leq T} \tilde{E}&\int_{|x|\geq R}|\varphi(x)|\sigma(t,x)dx\leq \sup\limits_{0\leq t\leq T}\tilde{E}\int_{|x|\geq R}\left(1+|x|^{2m}\right)\sigma(t,x)dx\\
			& \leq \frac{1}{1+R^{2n}}\sup\limits_{0\leq t\leq T}\tilde{E}\int_{|x|\geq R}\left(1+|x|^{2n}\right)\left(1+|x|^{2m}\right)\sigma(t,x)dx\\
			&\leq \frac{1}{1+R^{2n}}\sup\limits_{0\leq t\leq T}\tilde{E}\int_{|x|\geq R}2\left(1+|x|^{2(m+n)}\right)\sigma(t,x)dx\\
			&\leq \frac{2}{1+R^{2n}}\sup\limits_{0\leq t\leq T}\tilde{E}\int_{\mathbb{R}^{d}}\left(1+|x|^{2(m+n)}\right)\sigma(t,x)dx\\
			&\leq \frac{2e^{M(n+m,d,L)T}}{1+ R^{2n}}\int_{\mathbb{R}^{d}}\left(1+|x|^{2(m+n)}\right)\sigma_{0}(x)dx.
		\end{aligned}
	\end{equation}
\end{proof}
\section{Approximation of \texorpdfstring{$\sigma(t,x)$}{sigma(t,x)} by the IBV problem in \texorpdfstring{$B_{R}$}{BR}}
\label{sec:5}
With the estimation in Theorem \ref{thm:1b}, because almost all the density of $\sigma(t,x)$ is contained in the closed ball $B_{R}$ for $R$ large enough, it is natural to think about approximating $\sigma(t,x)$ by the solution, $\sigma_{R}(t,x)$, to the corresponding initial-boundary value (IBV) problem \eqref{eq:13a} of DMZ equation in the ball $B_{R}$.

It will be rigorously proved in this section that, for $R$ large enough, $\sigma(t,x)$ can be approximated well by $\sigma_{R}(t,x)$ defined in \eqref{eq:13a}, and in particular, the estimation \eqref{eq:36} holds in the proof of Theorem \ref{thm:4} in Section \ref{sec:3}.

The main result in this section is stated as follows:
\begin{theorem}
	With Assumptions \textbf{(A1)} to \textbf{(A4)}, there exists a constant $C > 0$ which only depends on $T$, $n$, $d$ and $L$, such that
	\begin{equation}
		\sup\limits_{0\leq t\leq T}\tilde{E}\int_{B_{\sqrt{R}}}|\sigma(t,x)-\sigma_{R}(t,x)|dx\leq \frac{C}{1+R^{n}}
	\end{equation}
	holds for large enough $R$ (for example, $R > 5$), where $\sigma_{R}(t,x)$ is the solution of the IBV problem \eqref{eq:13a}.
	\label{thm:2}
\end{theorem}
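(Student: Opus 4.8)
The plan is to estimate the error $\sigma-\sigma_R$ deep inside the ball, on $B_{\sqrt R}$, by combining an ordering $0\le\sigma_R\le\sigma$ with the observation that the two problems differ only through the truncation of the initial data and the homogeneous Dirichlet condition, both of which are localized in a thin shell near $\partial B_R$ that is far from $B_{\sqrt R}$. The tail estimate of Theorem \ref{thm:1b}, applied at radius $\sqrt R$, will then supply exactly the decay $1/(1+R^n)$.

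First I would establish the comparison $0\le\sigma_R(t,x)\le\sigma(t,x)$ on $[0,T]\times B_R$, $\tilde P$-almost surely. The cleanest route is to pass to the robust form via the exponential substitution \eqref{eq:14}: for each fixed continuous observation path, both $\sigma$ and $\sigma_R$ become solutions of the \emph{same} deterministic linear parabolic equation on $B_R$, whose coefficients are bounded and continuous on the compact set $\overline{B_R}$. Since $\sigma_R$ starts from $\sigma_0\mathscr{S}_R\le\sigma_0$ with vanishing boundary data while $\sigma\ge 0$ on $\partial B_R$, the classical parabolic maximum principle applied pathwise gives $0\le\sigma_R\le\sigma$, so that $|\sigma-\sigma_R|=\sigma-\sigma_R$ throughout $B_R$.

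Next I would fix a smooth cutoff $\theta_R$ with $\theta_R\equiv 1$ on $B_{\sqrt R}$, $\mathrm{supp}\,\theta_R\subset B_{2\sqrt R}$, $0\le\theta_R\le 1$, $|\nabla\theta_R|\lesssim R^{-1/2}$ and $|\nabla^2\theta_R|\lesssim R^{-1}$; the hypothesis $R>5$ ensures $2\sqrt R<R-\tfrac1R$, so that $\mathrm{supp}\,\theta_R$ is disjoint both from $\partial B_R$ and from $\{\mathscr{S}_R\neq 1\}$. Setting $G(t):=\int_{B_R}\theta_R(x)\bigl(\sigma(t,x)-\sigma_R(t,x)\bigr)\,dx$, disjointness of supports yields $G(0)=0$, and $\theta_R\equiv 1$ on $B_{\sqrt R}$ together with $\sigma-\sigma_R\ge 0$ gives $\int_{B_{\sqrt R}}|\sigma-\sigma_R|\,dx\le G(t)$. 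Because both $\sigma$ and $\sigma_R$ solve the DMZ equation in $B_R$, Itô's formula and integration of $\mathcal L^{*}$ by parts onto $\theta_R$—with \emph{no} boundary contribution, since $\theta_R$ is compactly supported inside $B_R$—give $dG(t)=\left(\int_{B_R}(\mathcal L\theta_R)(\sigma-\sigma_R)\,dx\right)dt+\sum_{j=1}^d\left(\int_{B_R}\theta_R h_j(\sigma-\sigma_R)\,dx\right)dY_t^j$. Taking $\tilde E$, under which $Y$ is a Brownian motion, annihilates the stochastic integral and leaves $\frac{d}{dt}\tilde E G(t)=\tilde E\int_{B_R}(\mathcal L\theta_R)(\sigma-\sigma_R)\,dx$. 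Since $\mathcal L\theta_R$ is supported in the annulus $\sqrt R\le|x|\le 2\sqrt R$ and there $|f_i(x)|\lesssim L|x|\lesssim L\sqrt R$ while $|\nabla\theta_R|\lesssim R^{-1/2}$, one checks $\|\mathcal L\theta_R\|_\infty\le C(d,L)$ uniformly in $R$; hence $\frac{d}{dt}\tilde E G(t)\le C(d,L)\,\tilde E\int_{|x|\ge\sqrt R}\sigma(t,x)\,dx$. Bounding the right side by Theorem \ref{thm:1b} at radius $\sqrt R$, integrating over $[0,T]$ and using $G(0)=0$ yields $\tilde E\int_{B_{\sqrt R}}|\sigma-\sigma_R|\,dx\le\frac{C}{1+R^n}$, with $C$ depending only on $T,n,d,L$ (the finite moment $\int_{\mathbb{R}^d}|x|^{2n}\sigma_0\,dx$ from \textbf{(A3)} being absorbed).

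The step I expect to be most delicate is rigorously justifying the comparison together with the Itô/integration-by-parts computation: one must confirm that the robust transformation and the pathwise maximum principle are legitimate at the regularity available, and that the stochastic integral is a true (not merely local) martingale, so that its $\tilde E$ indeed vanishes—this last point relies on the moment bounds of Theorem \ref{thm:1b} to keep $\int_{B_R}\theta_R h_j(\sigma-\sigma_R)\,dx$ in $L^2(\tilde P)$. By contrast, the placement of the transition annulus at radius $\sqrt R$, chosen precisely so that Theorem \ref{thm:1b} delivers the factor $1/(1+R^n)$, is the conceptual crux but becomes routine once the ordering is available.
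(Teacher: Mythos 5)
Your proposal is correct in substance but follows a genuinely different route from the paper's own proof. The paper works with a single weighted functional $\Phi(t)=\int_{B_R}\psi(x)v(t,x)\,dx$, where $v=\sigma-\sigma_R\ge 0$ (obtained from the SPDE maximum principle of Chekroun--Park--Temam) and $\psi(x)=e^{-\phi(x)}-e^{-\phi(R)}$ is built from the ad hoc weight $\phi(x)=\log\bigl(1+R^{n}\bigl(1-(1-|x|^{2n}/R^{2n})^{2}\bigr)\bigr)$; this weight is engineered so that $\psi$ and $\nabla\psi$ vanish on $\partial B_R$ (killing the Gauss--Green boundary terms), the derivatives of $\phi$ are bounded uniformly in $R$, the factor $e^{-\phi(R)}=1/(1+R^{n})$ produces the decay, and $\psi\ge\tfrac16$ on $B_{\sqrt R}$ recovers the $L^{1}$ error; a Gronwall argument then finishes. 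You instead use an ordinary cutoff $\theta_R$ compactly supported in $B_{2\sqrt R}$, so every boundary term disappears trivially, $G(0)=0$ exactly (whereas the paper's $\Phi(0)$ is only small, a point its proof glosses over), no Gronwall step is needed since your drift bound does not involve $G$ itself, and the decay is imported from Theorem~\ref{thm:1b} applied at radius $\sqrt R$. What the paper's heavier weight buys is a constant depending only on $T,n,d,L$: it only ever uses the total mass $\tilde{E}\int_{\mathbb{R}^d}\sigma\,dx$, which is bounded because $\int\sigma_0=1$. Your constant additionally carries the moment $\int_{\mathbb{R}^d}|x|^{2n}\sigma_0\,dx$ --- finite by \textbf{(A3)} and harmless for the application in Theorem~\ref{thm:4}, but strictly a slightly weaker conclusion than the literal statement, which asserts dependence on $T,n,d,L$ only. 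Two further remarks: your pathwise comparison via the robust transformation \eqref{eq:14} is a legitimate, more elementary substitute for the paper's citation of the SPDE maximum principle; however, your justification that the stochastic integral is a true martingale ``by the moment bounds of Theorem~\ref{thm:1b}'' does not work as stated, because that theorem controls \emph{first} moments of the mass, while square-integrability of $\int_{B_R}\theta_R h_j v\,dx$ requires \emph{second} moments. Either invoke the $L^{2}$/$L^{4}$ energy estimates (as in the paper's appendix), or, more simply, localize with stopping times and pass to the limit by Fatou's lemma using $G\ge 0$ and the first-moment drift bound; the paper's own proof silently skips this same point, so this is a shared, repairable technicality rather than a defect specific to your argument.
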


\begin{proof}[Proof of Theorem \ref{thm:2}]
	For each $R > 0$, consider the auxiliary function 
	\begin{equation}
		\phi(x) = \log\left(1+R^{n}\left(1-\left(1-\frac{|x|^{2n}}{R^{2n}}\right)^{2}\right)\right),\quad x\in B_{R},
	\end{equation}
	and
	\begin{equation}
		\psi(x) = e^{-\phi(x)} - e^{-\phi(R)},\quad x\in B_{R}.
	\end{equation}
	Define $v(t,x) = \sigma(t,x) - \sigma_{R}(t,x)$, $(t,x)\in[0,T]\times B_{R}$. Then, according to the maximum principle for SPDEs (cf. \cite{CHEKROUN20162926}, for example), we have $v(t,x)\geq 0$, for all $(t,x)\in[0,T]\times B_{R}$ and a.s. $\tilde{P}$. Let $\Phi(t)$ be the stochastic process defined by
	\begin{equation}
		\Phi(t) = \int_{B_{R}}\psi(x)v(t,x)dx.
	\end{equation}
	Since $v(t,x)$ is the solution to the SPDE
	\begin{equation}
		\begin{aligned} 
			dv(t,x) = \biggl[\frac{1}{2}\sum_{i,j=1}^{d}\frac{\partial^{2}}{\partial x_{i}\partial x_{j}}(a^{ij}(x)v(t,x)) &- \sum_{i=1}^{d}\frac{\partial }{\partial x_{i}}(f_{i}(x)v(t,x))\biggr]dt\\
			&+ \sum_{j=1}^{d}h_{j}(x)v(t,x)dY_{t}^{j},
		\end{aligned}
	\end{equation}
	the $\mathbb{R}$-valued stochastic process $\Phi(t)$ satisfies 
	\begin{equation}
		\begin{aligned} 
			d\Phi(t) =& \frac{1}{2}\biggl(\sum_{i,j=1}^{d}\int_{B_{R}}\psi(x)\frac{\partial^{2}}{\partial x_{i}\partial x_{j}}(a^{ij}(x)v(t,x))dx\biggr)dt\\
			&-\biggl(\sum_{i=1}^{d}\int_{B_{R}}\psi(x)\frac{\partial}{\partial x_{i}}(f_{i}(x)v(t,x))dx\biggr)dt\\
			&+\sum_{j=1}^{d}\biggl(\int_{B_{R}}h_{j}(x)\psi(x)v(t,x)dx\biggr)dY_{t}^{j}.
		\end{aligned}
	\end{equation}
	According to the Gauss-Green formula, we have
	\begin{align*}
		d\Phi(t) = & \frac{1}{2}\left(\sum_{i,j=1}^{d}\int_{B_{R}}a^{ij}(x)\frac{\partial^{2}\psi}{\partial x_{i}\partial x_{j}}v(t,x)dx\right)dt\\
		&+\left(\sum_{i=1}^{d}\int_{B_{R}}\frac{\partial\psi}{\partial x_{i}}f_{i}(x)v(t,x)dx\right)dt\\
		&+\sum_{j=1}^{d}\left(\int_{B_{R}}h_{j}(x)\psi(x)v(t,x)dx\right)dY_{t}^{j}\\
		&+\biggl[-\int_{\partial B_{R}}\vec{\mathfrak{M}}_{1}(t,x)\cdot\vec{\mathfrak{n}}dS + \int_{\partial B_{R}}\vec{\mathfrak{M}}_{2}(t,x)\cdot \vec{\mathfrak{n}}dS\biggr]dt,
	\end{align*}
	where, as in the proof of Theorem \ref{thm:1b},
	\begin{equation}
		\vec{\mathfrak{M}}_{i}(t,x) = \left(\mathfrak{M}_{i,1}(t,x),\cdots,\mathfrak{M}_{i,d}(t,x)\right),\quad i = 1,2,
	\end{equation}
	\begin{equation}
		\begin{aligned}
			\mathfrak{M}_{1,j}(t,x) &= \frac{1}{2}\sum_{i=1}^{d}\frac{\partial \psi}{\partial x_{i}}a^{ij}(x)v(t,x),\ j=1,\cdots,d,\\
			\mathfrak{M}_{2,i}(t,x) &= \psi\biggl(\frac{1}{2}\sum_{j=1}^{d}\frac{\partial}{\partial x_{j}}(a^{ij}(x)v(t,x)) - f_{i}(x)v(t,x)\biggr), \ i=1,\cdots,d,
		\end{aligned}
	\end{equation}
	$\vec{\mathfrak{n}}$ denotes the outward normal vector of the boundary $\partial B_{R}$ and $dS$ denotes the measure on $\partial B_{R}$. 
	
	Notice that $\psi|_{\partial B_{R}}\equiv 0$ and 
	\begin{equation}
		\frac{\partial\psi}{\partial x_{j}} = -e^{-\phi(x)}\frac{\partial \phi}{\partial x_{j}}. 
	\end{equation}
	Moreover, 
	\begin{equation}
		\frac{\partial \phi}{\partial x_{i}} = \frac{2R^{n}\biggl(1-\frac{|x|^{2n}}{R^{2n}}\biggr)\frac{2n|x|^{2n-2}x_{i}}{R^{2n}}}{1+R^{n}\biggl(1-\biggl(1-\frac{|x|^{2n}}{R^{2n}}\biggr)^{2}\biggr)},
	\end{equation}
	and therefore,
	\begin{equation}
		\frac{\partial \phi}{\partial x_{i}}\biggr|_{\partial B_{R}} = 0 = \frac{\partial\psi}{\partial x_{i}}\biggr|_{\partial B_{R}},\ i=1,\cdots,d.
	\end{equation}
	Hence,
	\begin{equation}
		\begin{aligned}
			d\Phi(t) = & \frac{1}{2}\left(\int_{B_{R}}e^{-\phi(x)}v(t,x)\sum_{i,j=1}^{d}a^{ij}(x)\left(-\frac{\partial^{2}\phi(x)}{\partial x_{i}\partial x_{j}} + \frac{\partial \phi}{\partial x_{i}}\frac{\partial\phi}{\partial x_{j}}\right)dx\right)dt\\
			&- \biggl(\int_{B_{R}}\sum_{i=1}^{d}e^{-\phi(x)}v(t,x)f_{i}(x)\frac{\partial\phi}{\partial x_{i}}dx\biggr)dt\\
			&+ \sum_{j=1}^{d}\left(\int_{B_{R}}h_{j}(x)\psi(x)v(t,x)dx\right)dY_{t}^{j}
		\end{aligned}
	\end{equation}
	Take expectation with respect to the probability measure $\tilde{P}$, and we have
	\begin{equation}
		\begin{aligned}
			\frac{d\tilde{E}\Phi(t)}{dt} = & \frac{1}{2}\tilde{E}\left(\int_{B_{R}}\psi(x)v(t,x)\sum_{i,j=1}^{d}a^{ij}(x)\left(-\frac{\partial^{2}\phi(x)}{\partial x_{i}\partial x_{j}} + \frac{\partial \phi}{\partial x_{i}}\frac{\partial\phi}{\partial x_{j}}\right)dx\right)\\
			&- \tilde{E}\biggl(\int_{B_{R}}\sum_{i=1}^{d}\psi(x)v(t,x)f_{i}(x)\frac{\partial\phi}{\partial x_{i}}dx\biggr)\\
			& +e^{-\phi(R)}\frac{1}{2}\tilde{E}\left(\int_{B_{R}}v(t,x)\sum_{i,j=1}^{d}a^{ij}(x)\left(-\frac{\partial^{2}\phi(x)}{\partial x_{i}\partial x_{j}} + \frac{\partial \phi}{\partial x_{i}}\frac{\partial\phi}{\partial x_{j}}\right)dx\right)\\
			&- e^{-\phi(R)}\tilde{E}\biggl(\int_{B_{R}}\sum_{i=1}^{d}v(t,x)f_{i}(x)\frac{\partial\phi}{\partial x_{i}}dx\biggr)
		\end{aligned}
	\end{equation}
	For $x\in B_{R}$, $|x_{i}|\leq |x|\leq R$, $i = 1,\cdots,d$, and together with the Lipschitz conditions for $f(x)$, 
	\begin{equation}
		\left|\frac{\partial \phi}{\partial x_{i}}(x)\right|\leq \frac{4nR^{n}|x|^{2n-2}|x_{i}|}{R^{2n}\biggl(1+\frac{|x|^{2n}}{R^{n}}\biggr)}\leq 4n,\ \forall \ x\in B_{R};
	\end{equation}
	\begin{equation}
		\begin{aligned} 
			\biggl|f_{i}(x)\frac{\partial \phi}{\partial x_{i}}\biggr|&\leq (|f(0)|+L|x|)\biggl|\frac{\partial \phi}{\partial x_{i}}\biggr|\\
			&\leq 4n|f(0)| + 4nL\frac{R^{n}|x|^{2n-1}|x_{i}|}{R^{2n}\biggl(1+\frac{|x|^{2n}}{R^{n}}\biggr)}\\
			&\leq 4n(|f(0)|+L),\ \forall\ x\in B_{R}.
		\end{aligned}
	\end{equation}
	
	Also, according to direct computations,
	\begin{equation}
		\begin{aligned} 
			\frac{\partial^{2}\phi}{\partial x_{i}\partial x_{j}} = & \frac{8n|x|^{2n-4}x_{i}x_{j}(R^{2n}(n-1)-(2n-1)|x|^{2n})(R^{3n}+2R^{2n}|x|^{2n}-|x|^{4n})}{(R^{3n}+2R^{2n}|x|^{2n}-|x|^{4n})^{2}}\\
			&-\frac{16n^{2}|x|^{2n-2}x_{i}x_{j}(R^{2n}|x|^{2n-2}-|x|^{4n-2})(R^{2n}-|x|^{2n})}{(R^{3n}+2R^{2n}|x|^{2n}-|x|^{4n})^{2}}\\
			&+\frac{4n\delta_{ij}|x|^{2n-2}(R^{2n}-|x|^{2n})}{R^{3n}+2R^{2n}|x|^{2n}-|x|^{4n}}.
		\end{aligned}
		\label{eq:80}
	\end{equation}
	where $\delta_{ij}$ is the Kronecker's symbol. Thus,
	\begin{equation}
		\biggl|\frac{\partial^{2}\phi}{\partial x_{i}\partial x_{j}}\biggr|\leq 8n(3n-2) + 16n^{2} + 4n,\ \forall x\in B_{R}.
		\label{eq:81}
	\end{equation}
	We would like to remark that the estimation in \eqref{eq:81} is quite rough. Each term on the right-hand side of \eqref{eq:81} corresponds to one term on the right-hand side of \eqref{eq:80}, and the purpose is just to show the second-order derivatives are also bounded by a constant independent of $R$. 
	
	Notice that $e^{-\phi(R)} = \frac{1}{1+R^{n}}$.	Together with the bounded condition for $a^{ij}(x)$, we have
	\begin{equation}
		\frac{d\tilde{E}\Phi(t)}{dt} \leq C_{1}\tilde{E}\Phi(t) + \frac{C_{1}}{1+ R^{n}}\tilde{E}\int_{B_{R}}v(t,x)dx
		\label{eq:43}
	\end{equation}
	where $C_{1}> 0$ is a constant which depends on $n,d,L$, but does not depend on $R$.
	
	According to Theorem \ref{thm:1b}, the integral
	\begin{equation}
		\tilde{E}\int_{B_{R}}v(t,x)dx\leq \tilde{E}\int_{B_{R}}\sigma(t,x)dx\leq \tilde{E}\int_{\mathbb{R}^{d}}\sigma(t,x)dx,
	\end{equation}
	which is also bounded by a constant independent of $R$, thus,
	\begin{equation}
		\frac{d\tilde{E}\Phi(t)}{dt}\leq C_{1}\tilde{E}\Phi(t) + \frac{C_{2}}{1+R^{n}}. 
	\end{equation}
	where $C_{2}> 0$ is a constant which depends on $T,n,d,L$.
	
	By Gronwall's inequality,
	\begin{equation}
		\tilde{E}\Phi(t)\leq \frac{C_{3}}{1+R^{n}}
		\label{eq:45}
	\end{equation}
	where $C_{3}> 0$ is a constant which depends on $T$, $n$, $d$ and $L$. 
	
	On the other hand, for $R > 5$ and for all $x\in B_{\sqrt{R}}$, i.e., $|x|\leq \sqrt{R}$, 
	\begin{equation}
		\phi(x)\in\biggl[0,\log\left(3-\frac{1}{R^{n}}\right)\biggr],\quad \psi(x)\geq\psi(\sqrt{R}) = \frac{R^{n}}{3R^{n}-1}-\frac{1}{1+R^{n}}\geq \frac{1}{3}-\frac{1}{6} = \frac{1}{6}.
	\end{equation} 
	Then,
	\begin{equation}
		\tilde{E}\Phi(t) = \tilde{E}\int_{B_{R}}\psi(x)v(t,x)dx\geq \tilde{E}\int_{B_{\sqrt{R}}}\psi(x)v(t,x)dx\geq \frac{1}{6}\tilde{E}\int_{B_{\sqrt{R}}}v(t,x)dx.
		\label{eq:47}
	\end{equation}
	Combining \eqref{eq:45} and \eqref{eq:47}, we obtain that, for all $R\gg 1$, 
	\begin{equation}
		\tilde{E}\int_{B_{\sqrt{R}}}|\sigma(t,x)-\sigma_{R}(t,x)|dx = \tilde{E}\int_{B_{\sqrt{R}}}v(t,x)dx\leq \frac{6C_{3}}{1+R^{n}}. 
	\end{equation}
\end{proof}

\section{Regularity of the Approximated Function \texorpdfstring{$u_{k}(t,x)$}{uktx}}
\label{sec:6}
In this section, we will discuss the regularity of $u_{k}(t,x)$, $t\in [0,T]$, which is the solution of a series of coefficient-frozen equations \eqref{eq:5}.

The main purpose of this section is to show that under mild conditions, the recursively defined functions $u_{k}(t,x)$ will not explode in the finite time interval $[0,T]$, even if the time-discretization step $\delta\rightarrow 0$, in the sense that the $L^{2}$-norm of $u_{k}(\tau_{k},x)$ ($k=1,\cdots,K$) is square integrable with respect to the probability measure $\tilde{P}$, and the expectations, $\tilde{E}\int_{B_{R}}|u_{k}(\tau_{k},x)|^{2}dx$, are uniformly bounded for $k=1,\cdots, K$. 

As shown in the next section, this following theorem is an essential intermediate result for the convergence analysis of this time-discretization scheme. 
\begin{theorem} 
	Let $\{u_{k}(t,x):\tau_{k-1}\leq t\leq \tau_{k}\}_{k=1}^{K}$ be the solution to the IBV problem of the coefficients-frozen equation \eqref{eq:5}. Then, with Assumptions \textbf{(A1)} to \textbf{(A4)},  the $L^{2}$-norm of $u_{k}(\tau_{k},x)$ is square-integrable with respect to the probability measure $\tilde{P}$, and we have
	\begin{equation}
		\tilde{E}\int_{B_{R}}|u_{k}(\tau_{k},x)|^{2}dx \leq C < \infty,\ \forall\ 
		k=1,\cdots,K,
	\end{equation}
	where $C > 0$ is a constant that depends on $d$, $T$, $R$, $L$, but is uniform in $k=1,\cdots,K$.
	\label{thm:4a}
\end{theorem}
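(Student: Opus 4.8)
The plan is to avoid working with the frozen equation \eqref{eq:5} for $u_k$ directly, and instead to exploit the exponential substitution \eqref{eq:17}, which recasts the problem in terms of the functions $\tilde u_k$ governed by \eqref{eq:6}. The reason dictates the whole strategy. A naive energy estimate for \eqref{eq:5}, after integrating by parts and using \textbf{(A1)}--\textbf{(A2)} on the compact ball $B_R$, yields $\frac{d}{dt}\int_{B_R}u_k^2\,dx\le \Lambda_k\int_{B_R}u_k^2\,dx$ with $\Lambda_k=C_R(1+|Y_{\tau_{k-1}}|^2)$, since the coefficients $F_i,J$ in \eqref{eq:16} grow quadratically in the frozen observation $Y_{\tau_{k-1}}$. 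Iterating Gronwall across the $k$ subintervals and only then taking $\tilde E$ would produce $\tilde E\exp\!\big(C_R\delta\sum_{j}(1+|Y_{\tau_j}|^2)\big)$, whose exponent is a Riemann sum for $C_R\int_0^{\tau_k}(1+|Y_s|^2)\,ds$. The exponential moment of this quadratic Brownian functional is finite only below a critical constant of order $T^{-2}$, so for large $R$ (large $C_R$) this expectation is infinite, for \emph{every} small $\delta$. Hence the pathwise-then-expectation route is not viable, and the quadratic-in-$Y$ growth must be removed before taking expectations.

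The first step is a purely deterministic energy estimate for the semigroup $S_t$ of \eqref{eq:6}, whose generator $\mathcal A\star=\frac12\sum_{i,j}\partial_{ij}(a^{ij}\star)-\sum_i\partial_i(f_i\star)-\frac12|h|^2\star$ contains \emph{no} observation. For $w(t,\cdot)=S_t w_0$ solving \eqref{eq:6}, integrating by parts twice and using the zero boundary condition gives $\frac{d}{dt}\int_{B_R}w^2\,dx=-\int_{B_R}\sum_{i,j}a^{ij}\partial_iw\,\partial_jw\,dx+(\text{lower order})$. The ellipticity \textbf{(A2)}, which on the compact $B_R$ supplies a uniform $\lambda_R=\min_{B_R}\lambda(x)>0$, provides the good negative term $-\lambda_R\int_{B_R}|\nabla w|^2\,dx$, into which the first-order contributions are absorbed by Young's inequality using the bounds on $a,\partial a,\partial^2 a,f,\partial f$ from \textbf{(A1)} on $B_R$. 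This produces $\|S_t\|_{L^2(B_R)\to L^2(B_R)}\le e^{\omega t}$ with $\omega=\omega(R,d,L)$ independent of $Y$, $k$ and $\delta$.

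The heart of the argument is the second step. Writing $v_k:=\tilde u_k(\tau_{k-1},\cdot)$, the relations \eqref{eq:8} and \eqref{eq:17} give the recursion $v_{k+1}=\exp\!\big(h^\top(\cdot)\,\Delta_k Y\big)\,S_\delta v_k$ with $\Delta_k Y:=Y_{\tau_k}-Y_{\tau_{k-1}}$ and $v_1=\sigma_0$. Since $v_k$ and $S_\delta v_k$ are $\mathcal Y_{\tau_{k-1}}$-measurable while $\Delta_k Y$ is, under $\tilde P$, an $N(0,\delta I_d)$ increment independent of $\mathcal Y_{\tau_{k-1}}$, conditioning and using the Gaussian moment generating function $\tilde E\exp\!\big(2h^\top(x)\Delta_k Y\big)=\exp\!\big(2|h(x)|^2\delta\big)$ inside the spatial integral yields
\[
\tilde E\Big[\textstyle\int_{B_R}v_{k+1}^2\,dx\,\Big|\,\mathcal Y_{\tau_{k-1}}\Big]=\int_{B_R}e^{2|h(x)|^2\delta}\,(S_\delta v_k)^2\,dx\le e^{(2H_R^2+2\omega)\delta}\int_{B_R}v_k^2\,dx,
\]
where $H_R:=\sup_{B_R}|h|$. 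This is exactly where the MGF converts the dangerous factor $\delta|Y|^2$ into the harmless $\delta|h|^2$. Taking full expectation and iterating over the $k\le K=T/\delta$ steps gives the uniform bound $\tilde E\int_{B_R}v_k^2\,dx\le e^{(2H_R^2+2\omega)T}\|\sigma_0\|_{L^2(B_R)}^2$, the per-step growth $e^{c_R\delta}$ compounding to $e^{c_R T}$ independently of $\delta$.

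It remains to transfer this bound from $\tilde u_k$ back to $u_k$. By \eqref{eq:17}, $u_k(\tau_k,\cdot)=\exp\!\big(-h^\top(\cdot)Y_{\tau_{k-1}}\big)S_\delta v_k$, so $\int_{B_R}u_k(\tau_k)^2\,dx\le e^{2H_R|Y_{\tau_{k-1}}|}\,\|S_\delta v_k\|_{L^2(B_R)}^2$, and Cauchy--Schwarz under $\tilde E$ splits this into a single-time Gaussian exponential moment $\tilde E\,e^{4H_R|Y_{\tau_{k-1}}|}$ (finite, bounded in terms of $R,T,d$ since $Y_{\tau_{k-1}}\sim N(0,\tau_{k-1}I_d)$ with $\tau_{k-1}\le T$) and a fourth-moment term $(\tilde E\|v_k\|_{L^2(B_R)}^4)^{1/2}$. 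The latter is controlled by repeating the conditional-MGF computation of the second step on $(\int_{B_R}v_{k+1}^2)^2$: expanding the square as a double integral in $x,y$ and using $\tilde E\exp\!\big(2(h(x)+h(y))^\top\Delta_k Y\big)=\exp\!\big(2|h(x)+h(y)|^2\delta\big)\le e^{8H_R^2\delta}$ gives $\tilde E\|v_{k+1}\|^4\le e^{(8H_R^2+4\omega)\delta}\tilde E\|v_k\|^4$, hence a uniform bound $\tilde E\|v_k\|_{L^2(B_R)}^4\le e^{(8H_R^2+4\omega)T}\|\sigma_0\|_{L^2(B_R)}^4$. Combining the three factors yields $\tilde E\int_{B_R}|u_k(\tau_k,x)|^2\,dx\le C$ with $C=C(R,T,d,L)$ uniform in $k$, as claimed. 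The main obstacle is the one already flagged: the quadratic observation dependence of \eqref{eq:5} makes the expectation blow up if handled pathwise, and the decisive idea is to relocate all randomness into the multiplicative increments $\exp(h^\top\Delta_k Y)$ and dispatch it one step at a time through the Gaussian MGF.
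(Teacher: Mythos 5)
Your proposal is correct and follows essentially the same route as the paper: the exponential transformation to the observation-free functions $\tilde{u}_k$ (the paper's $\sigma_k$), a per-step deterministic parabolic energy estimate, a per-step Gaussian moment-generating-function bound for the factors $\exp(h^\top\Delta_k Y)$ using independence of Brownian increments under $\tilde{P}$ (the paper's Lemma \ref{lem:1}), iteration over the $K = T/\delta$ steps, and finally Cauchy--Schwarz to return to $u_k$ via an exponential moment of $Y_{\tau_{k-1}}$ and a fourth-moment bound. The only difference is cosmetic: you control $\tilde{E}\bigl(\int_{B_R}v_k^2\,dx\bigr)^2$ directly by expanding the square as a double integral, so an $L^2$ semigroup bound suffices, whereas the paper bounds $(\int\sigma_k^2)^2 \leq |B_R|\int\sigma_k^4$ and runs the same argument on the $L^4$ norm with the $L^4$ energy estimate from its Appendix.
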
 
In the proof of Theorem \ref{thm:4a}, we will consider another exponential transformation given by
\begin{equation}
	\sigma_{k}(t,x) = \exp(h^{\top}(x)Y_{\tau_{k-1}})u_{k}(t,x),\ t\in[\tau_{k-1},\tau_{k}],\ k=1,\cdots,K.
\end{equation}
Direct computation implies that $\sigma_{k}(t,x)$ is the solution of 
\begin{equation}
	\left\{\begin{aligned}
		&\frac{\partial\sigma_{k}(t,x)}{\partial t} = \frac{1}{2}\sum_{i,j=1}^{d}\frac{\partial^{2}}{\partial x_{i}\partial x_{j}}(a^{ij}(x)\sigma_{k}(t,x)) - \sum_{i=1}^{d}\frac{\partial}{\partial x_{i}}(f_{i}(x)\sigma_{k}(t,x)) \\
		&\qquad- \frac{1}{2}|h(x)|^{2}\sigma_{k}(t,x),\ (t,x)\in[\tau_{k-1},\tau_{k}]\times B_{R},\\
		&\sigma_{k}(\tau_{k-1},x) = \exp\left(h^{\top}(x)Y_{\tau_{k-1}}\right)u_{k-1}(\tau_{k-1},x),\ x\in B_{R}\\
		&\sigma_{k}(t,x) = 0,\ (t,x)\in[\tau_{k-1},\tau_{k}]\times\partial B_{R},
	\end{aligned}\right.
	\label{eq:98}
\end{equation}
and recursively, we can rewrite the initial value in \eqref{eq:98} by
\begin{equation}
	\sigma_{k}(\tau_{k-1},x) = \exp\left(h^{\top}(x)(Y_{\tau_{k-1}}-Y_{\tau_{k-2}})\right)\sigma_{k-1}(\tau_{k-1},x),\ k=2,\cdots,K.
	\label{eq:99}
\end{equation}

Under the reference probability measure $\tilde{P}$, $\{Y_{t}:0\leq t\leq T\}$ is a Brownian motion and 
\begin{equation}
	Y_{\tau_{k}} - Y_{\tau_{k-1}}\sim\mathcal{N}(0,\delta I_{d}),\ k=1,\cdots,K,
\end{equation} 
with $I_{d}\in\mathbb{R}^{d\times d}$ the $d$-dimensional identity matrix. We would like to study the regularity of $\sigma_{k}(t,x)$ first, utilizing the Markov property of $Y$, and then derive the regularity results for $u_{k}(t,x)$.

For the sake of discussing the regularity of $\sigma_{k}(t,x)$ in a recursive manner, we need the following lemma which describes the relationship between $\sigma_{k}(\tau_{k-1},x)$ and $\sigma_{k-1}(\tau_{k-1},x)$ from \eqref{eq:99}.
\begin{lemma}
	For $k=2,\cdots,K$, let $\sigma_{k}(t,x)$, $t\in[\tau_{k-1},\tau_{k}]$ be the solution of \eqref{eq:98}. The end-point values $\sigma_{k}(\tau_{k-1},x)$ and $\sigma_{k-1}(\tau_{k-1},x)$ satisfy \eqref{eq:99}. Let us denote by $L^{4}(B_{R})$ the space of quartic-integrable functions in $B_{R}$. Assume that $\sigma_{k-1}(\tau_{k-1},\cdot)\in L^{4}(B_{R})$, and the $L^{4}$-norm, $\|\sigma_{k-1}(\tau_{k-1},\cdot)\|_{L^{4}}$, is quartic integrable with respect to $\tilde{P}$, i.e.,
	\begin{equation}
		\tilde{E}\int_{B_{R}}\sigma_{k-1}^{4}(\tau_{k-1},x)dx < \infty
	\end{equation}
	then $\sigma_{k}(\tau_{k-1},\cdot)\in L^{4}(B_{R})$, its $L^{4}$-norm, $\|\sigma_{k}(\tau_{k-1},\cdot)\|_{L^{4}}$ is quartic integrable with respect to $\tilde{P}$, and for sufficiently small time-discretization step size $\delta = \tau_{k}-\tau_{k-1}$, we have
	\begin{equation}
		\tilde{E}\int_{B_{R}}\sigma_{k}^{4}(\tau_{k-1},x)dx\leq (1+C\delta)\tilde{E}\int_{B_{R}}\sigma_{k-1}^{4}(\tau_{k-1},x)dx
	\end{equation}
	where $C$ is a constant that depends on $d$ and $R$. 
	\label{lem:1}
\end{lemma}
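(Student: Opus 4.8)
The plan is to exploit the purely multiplicative relation \eqref{eq:99} together with the Gaussian law of the observation increment under $\tilde{P}$. Raising \eqref{eq:99} to the fourth power gives, pointwise in $x\in B_{R}$,
\begin{equation}
	\sigma_{k}^{4}(\tau_{k-1},x) = \exp\left(4h^{\top}(x)(Y_{\tau_{k-1}}-Y_{\tau_{k-2}})\right)\sigma_{k-1}^{4}(\tau_{k-1},x),
\end{equation}
so after integrating over $B_{R}$ and taking $\tilde{E}$ the whole problem reduces to controlling the expectation of this product. Since both factors are nonnegative, Tonelli's theorem lets me move $\tilde{E}$ inside the spatial integral with no integrability hypothesis, leaving the quantity $\int_{B_{R}}\tilde{E}\left[\exp\left(4h^{\top}(x)(Y_{\tau_{k-1}}-Y_{\tau_{k-2}})\right)\sigma_{k-1}^{4}(\tau_{k-1},x)\right]dx$ to estimate for each fixed $x$.

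The decisive structural observation is that $\sigma_{k-1}(\tau_{k-1},\cdot)$ is independent of the increment $Y_{\tau_{k-1}}-Y_{\tau_{k-2}}$ under $\tilde{P}$. Indeed, the evolution equation \eqref{eq:98} has deterministic coefficients — the randomness of the scheme enters only through the initial data — so $\sigma_{k-1}(\tau_{k-1},\cdot)$ is obtained from $\sigma_{k-1}(\tau_{k-2},\cdot)$ by a deterministic solution operator on $[\tau_{k-2},\tau_{k-1}]$; unwinding \eqref{eq:99} recursively, $\sigma_{k-1}(\tau_{k-2},\cdot)$ depends only on the increments up to time $\tau_{k-2}$, hence $\sigma_{k-1}(\tau_{k-1},\cdot)$ is $\mathcal{Y}_{\tau_{k-2}}$-measurable. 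Because $\{Y_{t}\}$ has independent increments under $\tilde{P}$, it follows that $\sigma_{k-1}(\tau_{k-1},x)$ is independent of $Y_{\tau_{k-1}}-Y_{\tau_{k-2}}$, and the expectation of the product factors into a product of expectations. This independence step is the heart of the argument; everything else is a Gaussian computation, and I expect the careful bookkeeping of this measurability/independence to be the main obstacle.

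It then remains to evaluate the moment generating function of the increment. Under $\tilde{P}$ we have $Y_{\tau_{k-1}}-Y_{\tau_{k-2}}\sim\mathcal{N}(0,\delta I_{d})$, so for each fixed $x$,
\begin{equation}
	\tilde{E}\left[\exp\left(4h^{\top}(x)(Y_{\tau_{k-1}}-Y_{\tau_{k-2}})\right)\right] = \exp\left(8\delta|h(x)|^{2}\right).
\end{equation}
Since $h$ is continuous and $B_{R}$ is compact, $|h(x)|^{2}\leq H_{R}:=\max_{x\in B_{R}}|h(x)|^{2}<\infty$, so this factor is bounded by $e^{8\delta H_{R}}$ uniformly in $x$. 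Combining the pieces yields
\begin{equation}
	\tilde{E}\int_{B_{R}}\sigma_{k}^{4}(\tau_{k-1},x)dx \leq e^{8\delta H_{R}}\,\tilde{E}\int_{B_{R}}\sigma_{k-1}^{4}(\tau_{k-1},x)dx,
\end{equation}
which in particular shows the left-hand $L^{4}$-quantity stays finite. Finally, choosing $\delta$ small and using $e^{8H_{R}\delta}\leq 1+C\delta$ with $C$ depending only on $H_{R}$ (hence on $d$ and $R$) gives the claimed estimate; the almost-sure membership $\sigma_{k}(\tau_{k-1},\cdot)\in L^{4}(B_{R})$ follows directly, since $\exp\left(h^{\top}(x)(Y_{\tau_{k-1}}-Y_{\tau_{k-2}})\right)$ is bounded on $B_{R}$ for almost every fixed realization.
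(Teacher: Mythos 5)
Your proof is correct and follows essentially the same route as the paper's: raise \eqref{eq:99} to the fourth power, use Fubini--Tonelli to pass the expectation inside the spatial integral, factor the expectation using the independence of $\sigma_{k-1}(\tau_{k-1},\cdot)$ (which is $\mathcal{Y}_{\tau_{k-2}}$-measurable) from the Gaussian increment $Y_{\tau_{k-1}}-Y_{\tau_{k-2}}$ under $\tilde{P}$, compute the moment generating function $\exp\left(8\delta|h(x)|^{2}\right)$, bound $h$ on the compact ball $B_{R}$, and absorb the exponential into $1+C\delta$ for small $\delta$. Your vectorial MGF computation and the explicit unwinding of the measurability argument are only cosmetic differences from the paper's component-wise calculation and its appeal to the Markov property.
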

\begin{proof}[Proof of Lemma \ref{lem:1}]
	According to the expression \eqref{eq:99} and the definition of $\sigma_{k-1}$ on $[\tau_{k-2},\tau_{k-1}]$, because of the Markov property of $Y$, $\exp\left(h^{\top}(x)(Y_{\tau_{k-1}}-Y_{\tau_{k-2}})\right)$ is independent of $\sigma_{k-1}(\tau_{k-1},x)$.
	
	Because the observation function $h$ is assumed to be smooth enough, and $B_{R}$ is a bounded domain in $\mathbb{R}^{d}$, there exists a constant $M$, which may depend on $R$, such that the maximum of the absolute value of $h$, together with its partial derivatives up to order $m$, is bounded above by $M$.
	
	Therefore, by Fubini's theorem,
	\begin{equation}
		\begin{aligned} 
			\tilde{E}\int_{B_{R}}\sigma_{k}^{4}(\tau_{k-1},x)dx &= \tilde{E}\int_{B_{R}}\exp\left(4h^{\top}(x)(Y_{\tau_{k-1}}-Y_{\tau_{k-2}})\right)\sigma_{k-1}^{4}(\tau_{k-1},x)dx\\
			&=\int_{B_{R}} \tilde{E}\exp\left(4h^{\top}(x)(Y_{\tau_{k-1}}-Y_{\tau_{k-2}})\right)\tilde{E}\sigma_{k-1}^{4}(\tau_{k-1},x)dx.
		\end{aligned} 
	\end{equation}

			Next, let us estimate the expectations of functions of normal random variable $\xi:=Y_{\tau_{k-1}} - Y_{\tau_{k-2}}$ arising in the above expressions, for small time-discretization step $\delta$.
			
			In fact, because $\xi\sim\mathcal{N}(0,\delta I_{d})$, we have
			\begin{equation}
				\tilde{E}\exp\left(4h(x)^{\top}\xi\right) = \prod_{j=1}^{d}\tilde{E}e^{4h_{j}(x)\xi_{j}} =\prod_{j=1}^{d} \left(\int_{\mathbb{R}}\frac{1}{\sqrt{2\pi\delta}}e^{4h_{j}(x)z}e^{-\frac{z^{2}}{2\delta}}dz\right)
			\end{equation}
			In the bounded domain $B_{R}$,
			\begin{equation}
				\begin{aligned} 
					\int_{\mathbb{R}}\frac{1}{\sqrt{2\pi\delta}}e^{4h_{j}(x)z}e^{-\frac{z^{2}}{2\delta}}dz &= \int_{\mathbb{R}}\frac{1}{\sqrt{2\pi}}e^{4h_{j}(x)\sqrt{\delta}z}e^{-\frac{z^{2}}{2}}dz\\
					&= e^{8h_{j}^{2}(x)\delta}\int_{\mathbb{R}}\frac{1}{\sqrt{2\pi}}e^{-\frac{1}{2}(z-4h_{j}(x)\sqrt{\delta})^{2}}dz\\
					&=e^{8h_{j}^{2}(x)\delta}\leq e^{8M^{2}\delta}.
				\end{aligned}
			\end{equation}
			Therefore, for $\delta\ll 1$ (for example $\delta\leq \frac{1}{16M^{2}d}$),
			\begin{equation}
				\tilde{E}\exp\left(4h(x)^{\top}\xi\right)\leq e^{8dM^{2}\delta}\leq 1 + 16dM^{2}\delta.
			\end{equation}
			Thus,
			\begin{equation}
				\tilde{E}\int_{B_{R}}\sigma_{k}^{4}(\tau_{k-1},x)dx\leq (1+16dM^{2}\delta)\tilde{E}\int_{B_{R}}\sigma_{k-1}^{4}(\tau_{k-1},x)dx.
			\end{equation}
			\end{proof}
			
			Now, we are ready to give the proof of Theorem \ref{thm:4a}.
			\begin{proof}[Proof of Theorem \ref{thm:4a}]
				The idea of this proof is to study the regularity of $\sigma_{k}(t,x)$, recursively, and then obtain the regularity of $u_{k}(t,x)$ based on the relationship \eqref{eq:99}.
				
				In fact, according to the Cauchy-Schwartz inequality,
				\begin{align*}
					\tilde{E}\int_{B_{R}}|u_{k}(\tau_{k},x)|^{2}dx &= \tilde{E}\int_{B_{R}}\exp\left(-2h^{\top}(x)Y_{\tau_{k-1}}\right)\sigma_{k}^{2}(\tau_{k},x)dx\\
					&\leq \tilde{E}\biggl[\exp\left(2M\sum_{j=1}^{d}|Y_{\tau_{k-1},j}|\right)\int_{B_{R}}\sigma_{k}^{2}(\tau_{k},x)dx\biggr]\\
					&\leq \biggl(\tilde{E}\exp\left(4M\sum_{j=1}^{d}|Y_{\tau_{k-1},j}|\right)\biggr)^{\frac{1}{2}}\biggl(\tilde{E}\biggl(\int_{B_{R}}\sigma_{k}^{2}(\tau_{k},x)dx\biggr)^{2}\biggr)^{\frac{1}{2}}\\
					&\leq C_{1}\biggl(\tilde{E}\exp\left(4M\sum_{j=1}^{d}|Y_{\tau_{k-1},j}|\right)\biggr)^{\frac{1}{2}}\biggl(\tilde{E}\int_{B_{R}}\sigma_{k}^{4}(\tau_{k},x)dx\biggr)^{\frac{1}{2}}
				\end{align*}
				with $C_{1}> 0$, a constant depending only on $R$.
				
				Under the reference probability measure $\tilde{P}$, $\{Y_{t}:0\leq t\leq T\}$ is a standard $d$-dimensional Brownian motion, and therefore, the expectation $$\tilde{E}\exp\left(4M\sum_{j=1}^{d}|Y_{\tau_{k-1},j}|\right)$$ is bounded.
				
				Hence, it remains to show that there exists a constant $C_{2}>0$, such that,
				\begin{equation}
					\tilde{E}\int_{B_{R}}\sigma_{k}^{4}(\tau_{k},x)dx \leq C_{2} < \infty,
				\end{equation}
				holds uniformly for $k=1,\cdots,K$.
				
				In the time interval $[\tau_{k-1},\tau_{k}]$, $\sigma_{k}(t,x)$ is the solution to \eqref{eq:98}. According to the regularity results of parabolic partial differential equations, we have
				\begin{equation}
					\int_{B_{R}}\sigma_{k}^{4}(\tau_{k},x)dx\leq e^{C_{4}\delta}\int_{B_{R}}\sigma_{k}^{4}(\tau_{k-1},x)dx,\ \forall\ k=1,\cdots,K.
					\label{eq:110}
				\end{equation}
				where $C_{4}$ is a constant which depends on the coefficients of the filtering system. The techniques in the proof of \eqref{eq:110} is standard, and the proof of a counterpart, in which $L^{2}$-norm (instead of $L^{4}$-norm) is considered, can be found in the textbook \cite{RefWorks:RefID:3-evans2010partial}. We also provide a detailed proof in the Appendix, for the readers' convenience and in order to keep this paper self-contained.
				
				Thus, with the result in Lemma \ref{lem:1}, there exists $C_{5},C_{6}>0$, such that for small enough $\delta$,
				\begin{equation}
					\begin{aligned} 
						\tilde{E}\int_{B_{R}}\sigma_{k}^{4}(\tau_{k},x)dx&\leq e^{C_{4}\delta}\tilde{E}\int_{B_{R}}\sigma_{k}^{4}(\tau_{k-1},x)dx\leq e^{C_{4}\delta}(1+C_{5}\delta)\tilde{E}\int_{B_{R}}\sigma_{k-1}^{4}(\tau_{k-1},x)dx\\
						&\leq (1+C_{6}\delta)\tilde{E}\int_{B_{R}}\sigma_{k-1}^{4}(\tau_{k-1},x)dx.
					\end{aligned} 
				\end{equation}
				Inductively, we have
				\begin{equation}
					\begin{aligned} 
						\tilde{E}\int_{B_{R}}\sigma_{k}^{4}(\tau_{k},x)dx&\leq (1+C_{6}\delta)^{k}\int_{B_{R}}\sigma_{0}^{4}(x)dx\\
						&\leq (1+C_{6}\delta)^{\frac{T}{\delta}}\int_{B_{R}}\sigma_{0}^{4}(x)dx\leq e^{C_{6}T}\int_{B_{R}}\sigma_{0}^{4}(x)dx.
					\end{aligned} 
				\end{equation}
				Thus, we have proved the boundedness of $\tilde{E}\int_{B_{R}}\sigma_{k}^{4}(\tau_{k},x)dx$, and also, the result of Theorem \ref{thm:4a} holds.
			\end{proof}
			
			\section{Convergence Analysis of the Time Discretization Scheme}
			\label{sec:7}
			This section serves to show that the solution $u_{k}(t,x)$ of the coefficient-frozen equations \eqref{eq:5} can approximate the solution $u(t,x)$ of the original robust DMZ equation \eqref{eq:4} well, if the time-discretization step size $\delta$ is small enough. 
			
			Also, we will show in this section that, after the exponential transformation $\exp(h^{\top}(x)Y_{\tau_{k}})$, the $L^{1}$-norm of the difference between the unnormalized densities $\sigma_{R}(\tau_{k},x)$ (defined by \eqref{eq:13a}) and $\tilde{u}_{k+1}(\tau_{k},x)$ (defined by \eqref{eq:6}) still converges to zero, as $\delta\rightarrow 0$. In particular, the estimation \eqref{eq:38} holds in the proof of Theorem \ref{thm:4} in Section \ref{sec:3}.
			\begin{theorem}
				Fix $R > 0$. With Assumptions \textbf{(A1)} to \textbf{(A4)}, we can use the solution $u_{k}(t,x)$ of equation \eqref{eq:5} to approximate the solution $u(t,x)$ of equation \eqref{eq:4}. In particular, for every $\epsilon > 0$, there exists a constant $\delta > 0$, such that
				\begin{equation}
					\tilde{E}\int_{B_{R}}|\sigma_{R}(\tau_{k},x)-\tilde{u}_{k+1}(\tau_{k},x)|dx = \tilde{E}\int_{B_{R}}e^{h^{\top}(x)Y_{\tau_{k}}}\left|u(\tau_{k},x) - u_{k}(\tau_{k},x)\right|dx < \epsilon,
					\label{eq:64}
				\end{equation}
				holds for every $k=1,\cdots,K$.
				\label{thm:1a}
			\end{theorem}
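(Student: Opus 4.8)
The plan is to first reduce the weighted $L^{1}$ estimate \eqref{eq:64} to an $L^{2}$-in-expectation estimate for the error $e_{k} := u(\tau_{k},\cdot) - u_{k}(\tau_{k},\cdot)$, and then to control this error by a pathwise energy argument combined with a discrete Gronwall recursion over the time steps. The equality in \eqref{eq:64} is immediate from the exponential transformations \eqref{eq:14} and \eqref{eq:17}, which give $\sigma_{R}(\tau_{k},x) = e^{h^{\top}(x)Y_{\tau_{k}}}u(\tau_{k},x)$ and $\tilde{u}_{k+1}(\tau_{k},x) = e^{h^{\top}(x)Y_{\tau_{k}}}u_{k}(\tau_{k},x)$. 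Since $B_{R}$ is bounded and $h$ is smooth, two applications of the Cauchy--Schwarz inequality (first in $x$ over $B_{R}$, then under $\tilde{P}$) yield
\[
\tilde{E}\int_{B_{R}}e^{h^{\top}(x)Y_{\tau_{k}}}|e_{k}(x)|\,dx \le \Bigl(\tilde{E}\int_{B_{R}}e^{2h^{\top}(x)Y_{\tau_{k}}}dx\Bigr)^{1/2}\Bigl(\tilde{E}\int_{B_{R}}|e_{k}(x)|^{2}dx\Bigr)^{1/2}.
\]
Because $Y_{\tau_{k}}$ is Gaussian under $\tilde{P}$, the first factor equals $(\int_{B_{R}}e^{2|h(x)|^{2}\tau_{k}}dx)^{1/2}$, which is finite and bounded uniformly in $k$. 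Thus it suffices to show that $\tilde{E}\int_{B_{R}}|u(\tau_{k},x)-u_{k}(\tau_{k},x)|^{2}dx \to 0$ as $\delta\to 0$, uniformly in $k$.

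Next I would set up the error equation. On each interval $[\tau_{k-1},\tau_{k}]$ both $u$ (solving the robust DMZ equation \eqref{eq:4}) and $u_{k}$ (solving the frozen-coefficient equation \eqref{eq:5}) share the same second-order part, so $e = u - u_{k}$ solves a linear parabolic IBV problem with the frozen coefficients $F_{i}(\tau_{k-1},x), J(\tau_{k-1},x)$, zero boundary data, initial value $e(\tau_{k-1},\cdot) = u(\tau_{k-1},\cdot) - u_{k-1}(\tau_{k-1},\cdot)$, and a source term
\[
G(t,x) = \sum_{i=1}^{d}\bigl(F_{i}(t,x)-F_{i}(\tau_{k-1},x)\bigr)\frac{\partial u}{\partial x_{i}} + \bigl(J(t,x)-J(\tau_{k-1},x)\bigr)u(t,x).
\]
The key observation is that, by the explicit forms \eqref{eq:16} of $F_{i}$ and $J$, the coefficient increments are polynomials in the observation increment $Y_{t}-Y_{\tau_{k-1}}$ with coefficients bounded on $B_{R}$, hence dominated by $\omega_{k} := \sup_{\tau_{k-1}\le t\le\tau_{k}}|Y_{t}-Y_{\tau_{k-1}}|$ times a polynomial in $Y$. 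I would then run a standard $L^{2}$ energy estimate: multiply by $e$, integrate over $B_{R}$, and integrate by parts using $e|_{\partial B_{R}}=0$. Here Assumption \textbf{(A2)} is essential --- since $\lambda(\cdot)$ is continuous and positive it attains a minimum $\theta_{R}>0$ on the compact ball $\bar{B}_{R}$, giving uniform ellipticity there, which lets me absorb the first-order and source gradient terms into the dissipation $-\tfrac{1}{2}\int_{B_{R}}\sum a^{ij}\partial_{i}e\,\partial_{j}e$. Moving the derivative in $G$ off $u$ (another integration by parts) so that only $\|u(t)\|_{L^{2}(B_{R})}$, not $\|\nabla u(t)\|_{L^{2}}$, is needed, I obtain the pathwise differential inequality
\[
\frac{d}{dt}\int_{B_{R}}e^{2}\,dx \le C\bigl(1+|Y_{\tau_{k-1}}|^{2}\bigr)\int_{B_{R}}e^{2}\,dx + C\,\omega_{k}^{2}\,P(Y)\int_{B_{R}}u(t,x)^{2}dx,
\]
with $C=C(R,L,d,\theta_{R})$ and $P(Y)$ a polynomial in $\sup_{[0,T]}|Y_{t}|$.

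Integrating this over $[\tau_{k-1},\tau_{k}]$ and writing $\mathcal{E}_{k}:=\int_{B_{R}}e_{k}^{2}\,dx$ gives the recursion $\mathcal{E}_{k} \le e^{C_{k}\delta}\mathcal{E}_{k-1} + r_{k}$, where $C_{k} = C(1+|Y_{\tau_{k-1}}|^{2})$ and the local consistency error $r_{k} \lesssim \omega_{k}^{2}\,P(Y)\int_{\tau_{k-1}}^{\tau_{k}}\|u(s)\|_{L^{2}}^{2}ds$; note $\mathcal{E}_{0} = 0$ as the two solutions share the same initial datum. Unrolling the recursion, $\mathcal{E}_{K} \le \sum_{j=1}^{K}\bigl(\prod_{i=j+1}^{K} e^{C_{i}\delta}\bigr)r_{j}$, and since $\max_{j}\omega_{j}$ is the modulus of continuity of Brownian motion at scale $\delta$, which tends to $0$ almost surely as $\delta\to0$, the crude pathwise bound $\mathcal{E}_{K} \le e^{C\int_{0}^{T}(1+|Y_{s}|^{2})ds}\,\bigl(\max_{j}\omega_{j}^{2}\bigr)P(Y)\int_{0}^{T}\|u(s)\|_{L^{2}}^{2}ds$ already shows $\mathcal{E}_{K}\to 0$ pathwise. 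Taking $\tilde{E}$ and passing to the limit would then finish the argument, provided a uniform-in-$\delta$ integrable dominating function can be found.

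The main obstacle is precisely this final integrability and decoupling step: the accumulated stability factor telescopes to $\exp\!\bigl(C\int_{0}^{T}(1+|Y_{s}|^{2})ds\bigr)$, whose $\tilde{P}$-exponential moment is only finite for small $C$, so one cannot simply take expectations of the crude pathwise bound. To close the estimate uniformly in $k$ and $\delta$, I would (i) establish the a priori regularity $\sup_{0\le t\le T}\tilde{E}\|u(t)\|_{L^{2}(B_{R})}^{2}<\infty$ by an energy estimate on $u$ itself (or by transferring the bounds on $\sigma_{R}$ from Theorem \ref{thm:1b}), together with the uniform $L^{4}$ bounds $\sup_{k}\tilde{E}\int_{B_{R}}\sigma_{k}^{4}(\tau_{k},x)dx<\infty$ supplied by Theorem \ref{thm:4a} and Lemma \ref{lem:1}; and (ii) apply Hölder's inequality when taking expectations of the products $e^{C_{k}\delta}\mathcal{E}_{k-1}$ and $r_{k}$, so that the dangerous $Y$-dependent exponential and polynomial factors --- which on the compact $B_{R}$ possess finite Gaussian moments of every order --- are separated from the energy quantities, which are controlled in $L^{2}$ or $L^{4}$ by the regularity results. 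Carrying out this decoupling so that the constants remain bounded as $\delta\to0$ (equivalently, as $K\to\infty$), and verifying that the per-step consistency error is genuinely $O(\delta^{2})$ in expectation via $\tilde{E}\,\omega_{k}^{2}\le C\delta$, is the technical heart of the proof.
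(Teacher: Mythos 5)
Your reduction and energy set-up track the right quantities, but the proof as written has a genuine gap at exactly the point you flag as ``the technical heart'': the integration of the pathwise Gronwall bound. Your accumulated stability factor is $\exp\bigl(C\int_{0}^{T}(1+|Y_{s}|^{2})\,ds\bigr)$, and your proposed fix --- H\"older's inequality to decouple it from the energy terms --- cannot work, because this factor does not ``possess finite Gaussian moments of every order.'' For an exponential of a \emph{quadratic} functional of Brownian motion, $\tilde{E}\exp\bigl(c\int_{0}^{T}|Y_{s}|^{2}ds\bigr)$ is finite only when $c$ is below a threshold of order $1/T^{2}$ (Cameron--Martin); raising it to any H\"older power $p>1$ only makes this worse. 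Since the constant $C$ in your differential inequality is dictated by $d$, $L$, $R$ and the bounds on $h$ and its derivatives, it cannot be assumed small, so no choice of exponents yields an integrable dominating function, and the dominated-convergence step you invoke (``provided a uniform-in-$\delta$ integrable dominating function can be found'') is precisely what fails.

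The paper's proof avoids this obstruction by a truncation of the observation path rather than a moment decoupling: it introduces the event $\Omega_{M_{1}}=\{\sup_{0\leq t\leq T}\sum_{j}|Y_{t,j}|\leq M_{1}\}$ and splits the expectation. On $\Omega_{M_{1}}$ all path-dependent coefficients in the (there, $L^{1}$-type) Gronwall argument become deterministic constants depending on $M_{1}$, so the recursion over the $K$ steps closes cleanly and yields an error of size $C_{6}e^{M_{0}M_{1}}\delta^{1/2}$. On the complement, the paper uses $1_{\Omega_{M_{1}}^{c}}\leq \overline{Y}_{T}/M_{1}$, Cauchy--Schwarz, the Burkholder--Davis--Gundy inequality, and --- this is where Theorem \ref{thm:4a} and the appendix SPDE regularity estimate \eqref{eq:132} are actually consumed --- the uniform bounds on $\tilde{E}\int_{B_{R}}|u(\tau_{k},x)|^{2}dx$ and $\tilde{E}\int_{B_{R}}|u_{k}(\tau_{k},x)|^{2}dx$, producing a remainder $C_{11}/M_{1}$ that is independent of $\delta$. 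The conclusion then follows from the two-parameter choice: first $M_{1}$ large so that $C_{11}/M_{1}<\epsilon/2$, then $\delta$ small so that $C_{6}e^{M_{0}M_{1}}\delta^{1/2}<\epsilon/2$. Note that only moments of $\exp\bigl(cM_{0}\sum_{j}|Y_{\tau_{k},j}|\bigr)$ --- linear, not quadratic, growth in the exponent --- are ever needed, which is why the paper's estimates close while yours do not. Your $L^{2}$-energy formulation per se is a legitimate alternative to the paper's positive/negative-set $L^{1}$ argument, but it must be combined with this event-splitting (or some equivalent localization) to be completed.
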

			
			\begin{proof}[Proof of Theorem \ref{thm:1a}]
				Since $f$ is globally Lipschitz, $h\in C^{2}(B_{R})$, and $B_{R}$ is a bounded domain, there exists a constant $M_{0}>0$, such that the absolute value of each component in $f(x)$ and $h(x)$, as well as there first and second order derivatives, are dominated by $M$ in the ball $B_{R}$, i.e.,
				\begin{equation}
					\begin{aligned} 
						\max\limits_{x\in B_{R}}\biggl\{\max\limits_{1\leq i\leq d}|f_{i}(x)|,&\max\limits_{1\leq i\leq d}|h_{i}(x)|,\max\limits_{1\leq i,j\leq d}\biggl|\frac{\partial f_{i}}{\partial x_{j}}\biggr|,\\
						&\max\limits_{1\leq i,j\leq d}\biggl|\frac{\partial h_{i}}{\partial x_{j}}\biggr|,\max\limits_{1\leq i,j,k\leq d}\biggl|\frac{\partial^{2} f_{i}}{\partial x_{j}\partial x_{k}}\biggr|,\max\limits_{1\leq i\leq d}\biggl|\frac{\partial^{2} h_{i}}{\partial x_{j}\partial x_{k}}\biggr|\biggr\}\leq M_{0}.
					\end{aligned}
				\end{equation}
				
				Let $B_{R,t}^{+} = \{x\in B_{R}:u(t,x)-u_{k}(t,x)\geq 0\}$. According to the technical Lemma 4.1 in \cite{yymethod}, we have
				\begin{equation}
					\frac{d}{dt}\int_{B_{R,t}^{+}}(u(t,x)-u_{k}(t,x))dx = \int_{B_{R,t}^{+}}\frac{\partial}{\partial t}(u(t,x)-u_{k}(t,x))dx,
				\end{equation}
				for almost all $t\in [0,T]$.
				
				Then, according to equations \eqref{eq:4} and \eqref{eq:5} satisfied by $u(t,x)$ and $u_{k}(t,x)$ in $[\tau_{k-1},\tau_{k}]$,
				\begin{equation}
					\begin{aligned}
						&\frac{d}{dt}\int_{B_{R,t}^{+}}(u(t,x)-u_{k}(t,x))dx \\
						&= \int_{B_{R,t}^{+}}\frac{\partial}{\partial t}(u(t,x)-u_{k}(t,x))dx\\
						&=\frac{1}{2}\int_{B_{R,t}^{+}}\sum_{i,j=1}^{d}a^{ij}(x)\frac{\partial^{2}}{\partial x_{i}\partial x_{j}}(u-u_{k})dx + \int_{B_{R,t}^{+}}\sum_{i=1}^{d}F_{i}(\tau_{k-1},x)\frac{\partial }{\partial x_{i}}(u-u_{k})dx\\
						&\quad +\int_{B_{R,t}^{+}}J(\tau_{k-1},x)(u(t,x)-u_{k}(t,x))dx\\
						&\quad + \int_{B_{R,t}^{+}}\sum_{i=1}^{d}(F_{i}(t,x)-F_{i}(\tau_{k-1},x))\frac{\partial u}{\partial x_{i}}dx + \int_{B_{R,t}^{+}}(J(t,x)-J(\tau_{k-1},x))u(t,x)dx.
					\end{aligned}	
					\label{eq:113}
				\end{equation}
				Because $u(t,x) = u_{k}(t,x)\equiv 0$ on the boundary $\partial B_{R}$, and $\partial B_{R,t}^{+}\subset\partial B_{R}\cup\{x\in B_{R}:u(t,x)-u_{k}(t,x) = 0\}$, we have $(u-u_{k})|_{\partial B_{R,t}^{+}} = 0$ and $\nabla(u-u_{k})|_{\partial B_{R,t}^{+}} = -c(x)\vec{\mathfrak{n}}$ with $\vec{\mathfrak{n}}$ the outward normal vector of $B_{R,t}^{+}$ and $c(x)\geq 0$ on $\partial B_{R,t}^{+}$. Thus, the first three terms on the right-hand side of \eqref{eq:113} can be estimated by
				\begin{equation}
					\begin{aligned}
						\frac{1}{2}&\int_{B_{R,t}^{+}}\sum_{i,j=1}^{d}a^{ij}\frac{\partial^{2}}{\partial x_{i}\partial x_{j}}(u-u_{k})dx +\int_{B_{R,t}^{+}}\sum_{i=1}^{d}F_{i}(\tau_{k-1},x)\frac{\partial }{\partial x_{i}}(u-u_{k})dx\\
						&+\int_{B_{R,t}^{+}}J(\tau_{k-1},x)(u(t,x)-u_{k}(t,x))dx\\
						&= \frac{1}{2}\int_{B_{R,t}^{+}}\sum_{i,j=1}^{d}\frac{\partial^{2}a^{ij}}{\partial x_{i}\partial x_{j}}(u-u_{k})dx-\int_{B_{R,t}^{+}}\sum_{i=1}^{d}\frac{\partial F_{i}(\tau_{k-1},x)}{\partial x_{i}}(u-u_{k})dx\\
						&\quad+\int_{B_{R,t}^{+}}J(\tau_{k-1},x)(u(t,x)-u_{k}(t,x))dx\\
						&-\frac{1}{2}\int_{\partial B_{R,t}^{+}}\sum_{i,j=1}^{d}a^{ij}\frac{\partial}{\partial x_{i}}(u-u_{k})\frac{\partial}{\partial x_{j}}(u-u_{k})dS\\
						&\quad -\frac{1}{2}\int_{\partial B_{R,t}^{+}}(u-u_{k})\sum_{i,j=1}^{d}\frac{\partial a^{ij}}{\partial x_{i}}\vec{\mathfrak{n}}_{j}dS + \int_{\partial B_{R,t}^{+}}(u-u_{k})\sum_{i=1}^{d}F_{i}(\tau_{k-1},x)\vec{\mathfrak{n}}_{i}dS\\
						&\leq \biggl(\frac{d^{2}L}{2}+C(d,L,M_{0})\biggl(1+\sum_{j=1}^{d}|Y_{\tau_{k-1},j}|\biggr)^{2}\biggr)\int_{B_{R,t}^{+}}(u-u_{k})dx,\\
						&\leq C_{1}\biggl(1+\sum_{j=1}^{d}|Y_{\tau_{k-1},j}|\biggr)^{2}\int_{B_{R,t}^{+}}(u-u_{k})dx
					\end{aligned}
				\end{equation}
				where we use the fact that $a(x) = g(x)g(x)^{\top}$ is positive semi-definite and the definition of $F_{i}(t,x)$ and $J(t,x)$ in \eqref{eq:16}; $C(d,L,M_{0})$ and $C_{1}$ are constants which depend only on $d,L,M_{0}$; and $dS$ denotes the measure on $\partial B_{R,t}^{+}$.
				
				Also, by the definition of $F_{i}(t,x)$ and $J(t,x)$ in \eqref{eq:16}, we have the following estimation of the differences
				\begin{equation}
					\begin{aligned}
						&|F(t,x) - F(\tau_{k-1},x)|\leq C_{2}|Y_{t}-Y_{\tau_{k-1}}|,\\
						&|J(t,x) - J(\tau_{k-1},x)|\leq C_{3}\biggl(1+\sum_{j=1}^{d}(|Y_{t,j}|+|Y_{\tau_{k-1},j}|)\biggr)|Y_{t}-Y_{\tau_{k}}|,\ \forall\ x\in B_{R},
					\end{aligned}
				\end{equation}
				where $C_{2}$ and $C_{3}$ are constants which only depends on $d,L.M_{0}$.
				
				Hence,
				\begin{equation}
					\begin{aligned}
						\frac{d}{dt}\int_{B_{R,t}^{+}}&(u(t,x)-u_{k}(t,x))dx \leq C_{1}\biggl(1+\sum_{j=1}^{d}|Y_{\tau_{k-1},j}|\biggr)^{2}\int_{B_{R,t}^{+}}(u-u_{k})dx\\
						&+ C_{2}|Y_{t}-Y_{\tau_{k-1}}|\int_{B_{R}}|\nabla u(t,x)|dx \\
						&+ C_{3}\biggl(1+\sum_{j=1}^{d}(|Y_{t,j}|+|Y_{\tau_{k-1},j}|)\biggr)|Y_{t}-Y_{\tau_{k}}|\int_{B_{R}}|u(t,x)|dx
					\end{aligned}
					\label{eq:66}
				\end{equation}
				holds for almost all $t\in[\tau_{k-1},\tau_{k}]$ and almost surely, where $C_{1}$, $C_{2}$ and $C_{3}$ are constants which depend on the coefficients of the system.  
				
				Under the reference probability distribution $\tilde{P}$, the observation process $\{Y_{t}:0\leq t\leq T\}$ is a standard $d$-dimensional Brownian motion, and therefore, 
				\begin{equation}
					\begin{aligned} 
						\sum_{j=1}^{d}(Y_{t,j}-Y_{\tau_{k-1},j})\sim N(0,d(t-\tau_{k-1})).
					\end{aligned}
				\end{equation}
				
				Let $\Omega_{M_{1}} = \left\{\omega:\sup\limits_{0\leq t\leq T}\sum_{j=1}^{d}|Y_{t,j}(\omega)|\leq M_{1}\right\}$ be the event which represents the observation process $Y_{t}$ is not severely abnormal, and $1_{A}(\cdot)$ is the indicator function of the set $A$.
				
				For a fixed $M_{1}>0$, let us first take the expectation with respect to $\tilde{P}$ on the event $\Omega_{1,M_{1}}$ for both sides of \eqref{eq:66}, and we have
				\begin{align*}
					\frac{d}{dt}\tilde{E}&\biggl[1_{\Omega_{M_{1}}}\int_{B_{R,t}^{+}}(u(t,x)-u_{k}(t,x))dx\biggr] \\
					\leq&\  C_{1}\tilde{E}\biggl[1_{\Omega_{M_{1}}}\biggl(1+\sum_{j=1}^{d}|Y_{\tau_{k-1},j}|\biggr)^{2}\int_{B_{R,t}^{+}}(u-u_{k})dx\biggr]\\
					&+ C_{2}\tilde{E}\biggl[1_{\Omega_{M_{1}}}|Y_{t}-Y_{\tau_{k-1}}|\int_{B_{R}}|\nabla u|dx\biggr] \\
					&+ C_{3}\tilde{E}\biggl[1_{\Omega_{M_{1}}}\biggl(1+\sum_{j=1}^{d}(|Y_{t,j}|+|Y_{\tau_{k-1},j}|)\biggr)|Y_{t}-Y_{\tau_{k}}|\int_{B_{R}}|u|dx\biggr]\\
					\leq&\  C_{1}(1+M_{1})^{2}\tilde{E}\biggl[1_{\Omega_{M_{1}}}\int_{B_{R,t}^{+}}(u(t,x)-u_{k}(t,x))dx\biggr] \\
					&+ C_{2}\tilde{E}\biggl[1_{\Omega_{M_{1}}}|Y_{t}-Y_{\tau_{k-1}}|\int_{B_{R}}|\nabla u|dx\biggr] \\
					&+ C_{3}(1+2M_{1})\tilde{E}\biggl[1_{\Omega_{M_{1}}}|Y_{t}-Y_{\tau_{k-1}}|\int_{B_{R}}|u|dx\biggr]\\
					\leq&\  C_{1}(1+M_{1})^{2}\tilde{E}\biggl[1_{\Omega_{M_{1}}}\int_{B_{R,t}^{+}}(u(t,x)-u_{k}(t,x))dx\biggr] \\
					&+C_{2}\biggl(\tilde{E}|Y_{t}-Y_{\tau_{k-1}}|^{2}\biggr)^{\frac{1}{2}}\left(\tilde{E}\biggl[1_{\Omega_{M_{1}}}\left(\int_{B_{R}}|\nabla u|dx\right)^{2}\biggr]\right)^{\frac{1}{2}}\\
					&+ C_{3}(1+2M_{1})\left(\tilde{E}|Y_{t}-Y_{\tau_{k-1}}|^{2}\right)^{\frac{1}{2}}\left(\tilde{E}\biggl[1_{\Omega_{M_{1}}}\left(\int_{B_{R}}|u|dx\right)^{2}\biggr]\right)^{\frac{1}{2}}\\
					=&\ C_{1}(1+M_{1})^{2}\tilde{E}\biggl[1_{\Omega_{M_{1}}}\int_{B_{R,t}^{+}}(u(t,x)-u_{k}(t,x))dx\biggr] \\
					&+C_{2}d^{\frac{1}{2}}(t-\tau_{k-1})^{\frac{1}{2}}\left(\tilde{E}\biggl[1_{\Omega_{M_{1}}}\left(\int_{B_{R}}|\nabla u|dx\right)^{2}\biggr]\right)^{\frac{1}{2}}\\
					&+ C_{3}(1+2M_{1})d^{\frac{1}{2}}(t-\tau_{k-1})^{\frac{1}{2}}\left(\tilde{E}\biggl[1_{\Omega_{M_{1}}}\left(\int_{B_{R}}|u|dx\right)^{2}\biggr]\right)^{\frac{1}{2}}.
				\end{align*}
				Here, the second inequality holds because of the property of the event $\Omega_{M_{1}}$, the third inequality holds according to the Cauchy-Schwartz inequality and the last equality holds because $Y_{t}$ is a normal distributed random vector.
				
				On the event $\Omega_{M_{1}}$, the observation process $\{Y_{t}:0\leq t\leq T\}$ is bounded. Therefore, according to the regularity results of parabolic partical differential equations (cf. \cite{RefWorks:RefID:3-evans2010partial}, Section 7.1, Theorem 6), the integrals $\int_{B_{R}}|\nabla u|dx$ and $\int_{B_{R}}|u|dx$ are also bounded for almost every $t\in[0,T]$, as long as $f\in C^{1}(B_{R})$ and $h\in C^{2}(B_{R})$. Thus,
				\begin{equation}
					\begin{aligned}
						\frac{d}{dt}\tilde{E}\biggl[1_{\Omega_{M_{1}}}&\int_{B_{R,t}^{+}}(u(t,x)-u_{k}(t,x))dx\biggr] \\&\leq C_{4}\tilde{E}\biggl[1_{\Omega_{M_{1}}}\int_{B_{R,t}^{+}}(u(t,x)-u_{k}(t,x))dx\biggr]
						+ C_{5}(t-\tau_{k-1})^{\frac{1}{2}},
					\end{aligned}
				\end{equation}
				where $C_{4},C_{5}>0$ are constants which depend on $d,L,M_{0},M_{1},T$.
				
				Similarly, we also have the estimation for the integral on the set $B_{R,t}^{-} = \{x\in B_{R}:u(t,x)-u_{k}(t,x)\leq 0\}$:
				\begin{equation}
					\begin{aligned}
						\frac{d}{dt}\tilde{E}\biggl[1_{\Omega_{M_{1}}}&\int_{B_{R,t}^{-}}(u(t,x)-u_{k}(t,x))dx\biggr] \\&\leq C_{4}\tilde{E}\biggl[1_{\Omega_{M_{1}}}\int_{B_{R,t}^{-}}(u(t,x)-u_{k}(t,x))dx\biggr]
						+ C_{5}(t-\tau_{k-1})^{\frac{1}{2}},
					\end{aligned}
				\end{equation}
				and thus
				\begin{equation}
					\begin{aligned}
						\frac{d}{dt}\tilde{E}\biggl[1_{\Omega_{M_{1}}}&\int_{B_{R}}|u(t,x)-u_{k}(t,x)|dx\biggr] \\&\leq C_{4}\tilde{E}\biggl[1_{\Omega_{M_{1}}}\int_{B_{R}}|u(t,x)-u_{k}(t,x)|dx\biggr]
						+ 2C_{5}(t-\tau_{k-1})^{\frac{1}{2}}.
					\end{aligned}
				\end{equation}
				
				Therefore,
				\begin{equation}
					\begin{aligned} 
						\frac{d}{dt}\biggl(e^{-C_{4}(t-\tau_{k-1})}&\tilde{E}\biggl[1_{\Omega_{M_{1}}}\int_{B_{R}}|u(t,x)-u_{k}(t,x)|dx\biggr]\biggr)\\
						&\leq 2C_{5}e^{-C_{4}(t-\tau_{k-1})}(t-\tau_{k-1})^{\frac{1}{2}},
					\end{aligned}
				\end{equation}
				and
				\begin{equation}
					\begin{aligned}
						\tilde{E}\biggl[&1_{\Omega_{M_{1}}}\int_{B_{R}}|u(t,x)-u_{k}(t,x)|dx\biggr]\\
						&\leq e^{C_{4}(t-\tau_{k-1})}\left(\tilde{E}\biggl[1_{\Omega_{M_{1}}}\int_{B_{R}}|u(\tau_{k-1},x)-u_{k}(\tau_{k-1},x)|dx\biggr] + \frac{4}{3}C_{5}(t-\tau_{k-1})^{\frac{3}{2}}\right).
					\end{aligned}
				\end{equation}
				
				Notice that $u_{k}(\tau_{k-1},x)\equiv u_{k-1}(\tau_{k-1},x)$ by definition. Inductively, we have
				\begin{equation}
					\begin{aligned}
						\tilde{E}\biggl[&1_{\Omega_{M_{1}}}\int_{B_{R}}|u(\tau_{k},x)-u_{k}(\tau_{k},x)|dx\biggr]\\
						&\leq e^{C_{4}\delta}\left(\tilde{E}\biggl[1_{\Omega_{M_{1}}}\int_{B_{R}}|u(\tau_{k-1},x)-u_{k-1}(\tau_{k-1},x)|dx\biggr] + \frac{4}{3}C_{5}\delta^{\frac{3}{2}}\right)\\
						&\leq e^{C_{4}k\delta}\tilde{E}\biggl[1_{\Omega_{M_{1}}}\int_{B_{R}}|\sigma_{0}(x)-\sigma_{0}(x)|dx\biggr] + \frac{4}{3}C_{5}\delta^{\frac{3}{2}}\sum_{i=1}^{k}e^{C_{4}(i-1)\delta}\\
						&\leq \frac{4}{3}C_{5}\delta^{\frac{3}{2}}ke^{C_{4}k\delta}\leq C_{6}\delta^{\frac{1}{2}}.
					\end{aligned}
					\label{eq:15a}
				\end{equation}
				where $C_{6}$ is a constant which depends on $d,L,M_{0},M_{1},T$. 
				
				Also, for the value we are concerned with in \eqref{eq:64},
				\begin{equation}
					\begin{aligned}
						\tilde{E}\biggl[&1_{\Omega_{M_{1}}}\int_{B_{R}}e^{h^{\top}(x)Y_{\tau_{k}}}|u(\tau_{k},x)-u_{k}(\tau_{k},x)|dx\biggr]\\
						&\leq e^{M_{0}M_{1}}\tilde{E}\biggl[1_{\Omega_{M_{1}}}\int_{B_{R}}|u(\tau_{k},x)-u_{k}(\tau_{k},x)|dx\biggr]\leq C_{6}e^{M_{0}M_{1}}\delta^{\frac{1}{2}}
					\end{aligned}
				\end{equation}
				
				On the event $\Omega_{M_{1}}^{c} = \{\omega:\sup_{0\leq t\leq T}\sum_{j=1}^{d}|Y_{t,j}(\omega)|> M_{1}\}$, let
				$$\overline{Y}_{T}\triangleq\sup_{0\leq t\leq T}\sum_{j=1}^{d}|Y_{t,j}|,$$
				then, 
				\begin{equation}
					\begin{aligned} 
						\tilde{E}&\biggl[1_{\Omega_{M_{1}}^{c}}\int_{B_{R}}e^{h^{\top}(x)Y_{\tau_{k}}}|u(\tau_{k},x)-u_{k}(\tau_{k},x)|dx\biggr]\\
						&\leq \tilde{E}\biggl[1_{\Omega_{1,M_{1}}^{c}}\frac{\overline{Y}_{T}}{M_{1}}\exp\left(M_{0}\sum_{j=1}^{d}|Y_{\tau_{k},j}|\right)\int_{B_{R}}|u(\tau_{k},x) - u_{k}(\tau_{k},x)|dx\biggr]\\
						&\leq \frac{1}{M_{1}}\left(\tilde{E}\biggl[\overline{Y}_{T}^{2}\exp\left(2M_{0}\sum_{j=1}^{d}|Y_{\tau_{k},j}|\right)\biggr]\right)^{\frac{1}{2}}\left(\tilde{E}\left(\int_{B_{R}}|u(\tau_{k},x)-u_{k}(\tau_{k},x)|dx\right)^{2}\right)^{\frac{1}{2}}\\
						&\leq \frac{C_{7}}{M_{1}}\left(\tilde{E}\xi^{2}\right)^{\frac{1}{2}}\left(\tilde{E}\int_{B_{R}}|u(\tau_{k},x)|^{2}dx+\tilde{E}\int_{B_{R}}|u_{k}(\tau_{k},x)|^{2}dx\right)^{\frac{1}{2}}.
					\end{aligned}
					\label{eq:75}
				\end{equation}
				where $C_{7} > 0$ is a constant which is related to the volume of the $d$-dimensional ball $B_{R}$, and $\xi$ is the random variable given by
				\begin{equation}
					\xi = \overline{Y}_{T}\exp\left(M_{0}\sum_{j=1}^{d}|Y_{\tau_{k},j}|\right),
				\end{equation}
				and
				\begin{equation}
					\begin{aligned}
						\tilde{E}\xi^{2} = \tilde{E}\biggl[\overline{Y}_{T}^{2}\exp\left(2M_{0}\sum_{j=1}^{d}|Y_{\tau_{k},j}|\right)\biggr]\leq \left(\tilde{E}\overline{Y}_{T}^{4}\right)^{\frac{1}{2}}\left(\tilde{E}\exp\left(4M_{0}\sum_{j=1}^{d}|Y_{\tau_{k},j}|\right)\right)^{\frac{1}{2}}
					\end{aligned}
				\end{equation}
				
				According to the Burkholder-Davis-Gundy inequality (cf. \cite{karatzas1998brownian}, Chapter 3, Theorem 3.28, for example), there exists $C_{8} > 0$, such that
				\begin{equation}
					\tilde{E}\overline{Y}_{T}^{4}\leq C_{8}\tilde{E}\sum_{j=1}^{d}|Y_{T,j}|^{4}\leq 3C_{8}dT^{2}.
				\end{equation} 
				and also, because $Y_{\tau_{k},j}$ are normal random variables, the expectation of $$\exp\left(4M_{0}\sum_{j=1}^{d}Y_{\tau_{k},j}\right)$$ is bounded.
				
				For the value $\tilde{E}\int_{B_{R}}|u(\tau_{k},x)|^{2}dx$, because
				\begin{equation}
					u(t,x) = \exp\left(-\sum_{j=1}^{d}h_{j}(x)Y_{t,j}\right)\sigma(t,x),
				\end{equation}
				then
				\begin{equation}
					\begin{aligned} 
						\tilde{E}\int_{B_{R}}|u(\tau_{k},x)|^{2}dx&=\tilde{E}\int_{B_{R}}\exp\biggl(-2\sum_{j=1}^{d}h_{j}(x)Y_{\tau_{k},j}\biggr)\sigma^{2}(\tau_{k},x)dx\\
						&\leq \tilde{E}\biggl[\exp\left(2M_{0}\sum_{j=1}^{d}|Y_{\tau_{k},j}|\right)\int_{B_{R}}\sigma^{2}(\tau_{k},x)dx\biggr]\\
						&\leq\left(\tilde{E}\exp\left(4M_{0}\sum_{j=1}^{d}|Y_{\tau_{k},j}|\right)\right)^{\frac{1}{2}}\left(\tilde{E}\left(\int_{B_{R}}|\sigma(\tau_{k},x)|^{2}dx\right)^{2}\right)^{\frac{1}{2}}.
					\end{aligned}
				\end{equation}

				Notice that $\sigma(t,x)$ is the solution to the stochastic partial differential equation
				\begin{equation}
					d\sigma(t,x) = \mathcal{L}^{*}\sigma(t,x)dt + \sum_{j=1}^{d}h_{j}\sigma(t,x)dY_{t,j}.
				\end{equation}
				and the boundedness of
				\begin{equation}
					\tilde{E}\left(\int_{B_{R}}|\sigma(\tau_{k},x)|^{2}dx\right)^{2}
				\end{equation}
				follows from the regularity theory of stochastic partial differential equation. 
				
				In the monograph \cite{Stochastic}, the authors provided a similar regularity result, and proved that $\tilde{E}\int_{B_{R}}|\sigma(\tau_{k},x)|^{2}dx$ is bounded by the initial values. Here in our case, we will prove that there exists $C_{9}>0$, such that
				\begin{equation}
					\tilde{E}\left(\int_{B_{R}}|\sigma(\tau_{k},x)|^{2}dx\right)^{2}\leq C_{9}\left(\int_{B_{R}}|\sigma_{0}(x)|^{2}dx\right)^{2}.
					\label{eq:132}
				\end{equation}
				The detailed proof of \eqref{eq:132} can be found in the Appendix. 
				
				Therefore, we have
				\begin{equation}
					\tilde{E}\int_{B_{R}}|u(\tau_{k},x)|^{2}dx\leq C_{10},
				\end{equation}
				where $C_{10}> 0$ is a constant that does not depend on $\delta$ or $M_{1}$.
				
				Furthermore, as we have discussed in the previous section, $\tilde{E}\int_{B_{R}}|u_{k}(\tau_{k},x)|^{2}dx$ is also bounded above, and thus, we have 
				\begin{equation}
					\begin{aligned} 
						\tilde{E}\biggl[&1_{\Omega_{M_{1}}^{c}}\int_{B_{R}}e^{h^{\top}(x)Y_{\tau_{k}}}|u(\tau_{k},x)-u_{k}(\tau_{k},x)|dx\biggr]\leq \frac{C_{11}}{M_{1}},
					\end{aligned}
					\label{eq:82}
				\end{equation}
				where $C_{11}$ is a constant which does not depend on $M_{1}$ or $\delta$. 
				
				In summary, for each $\epsilon > 0$, there exists $M_{1}> 0$, such that
				\begin{equation}
					\frac{C_{11}}{M_{1}}<\frac{\epsilon}{2},
				\end{equation} 
				and for this particular $M_{1}$, there exists $\delta>0$, such that
				\begin{equation}
					C_{6}e^{M_{0}M_{1}}\delta^{\frac{1}{2}}<\frac{\epsilon}{2},
				\end{equation}
				Therefore, for every $k=1,\cdots, K$,
				\begin{equation}
					\begin{aligned} 
						\tilde{E}&\int_{B_{R}}e^{h^{\top}(x)Y_{\tau_{k}}}|u(\tau_{k},x) - u_{k}(\tau_{k},x)|dx \\
						&= \tilde{E}\biggl[1_{\Omega_{1,M_{1}}}\int_{B_{R}}e^{h^{\top}(x)Y_{\tau_{k}}}|u(\tau_{k},x)-u_{k}(\tau_{k},x)|dx\biggr] \\
						&\quad+ \tilde{E}\biggl[1_{\Omega_{1,M_{1}}^{c}}\int_{B_{R}}e^{h^{\top}(x)Y_{\tau_{k}}}|u(\tau_{k},x)-u_{k}(\tau_{k},x)|dx\biggr]\\
						&\leq \ C_{6}e^{M_{0}M_{1}}\delta^{\frac{1}{2}} + \frac{C_{10}}{M_{1}}<\epsilon.
					\end{aligned}
					\label{eq:84}
				\end{equation}
			\end{proof}
			\section{Conclusion}
			\label{sec:8}
			In this paper, we provide a novel convergence analysis of Yau-Yau algorithm from a probabilistic perspective. With very liberal assumptions only on the coefficients of the filtering systems and the initial distributions (without assumptions on particular paths of observations), we can prove that Yau-Yau algorithm can provide accurate approximations with arbitrary precision to a quite broad class of statistics for the conditional distribution of state process given the observations, which includes the most commonly used conditional mean and covariance matrix. Therefore, the capability of Yau-Yau algorithm to solve very general nonlinear filtering problems is theoretically verified in this paper.
			
			In the process of deriving this probabilistic version of the convergence results, we study the properties of the exact solution, $\{\sigma(t,x):0\leq t\leq T\}$, to the DMZ equation and the approximated solution $\{\tilde{u}_{k+1}(\tau_{k},x):1\leq k\leq K\}$, given by Yau-Yau algorithm, respectively. 
			
			For the exact solution $\sigma(t,x)$ of the DMZ equation, we have shown in Section \ref{sec:4} and Section \ref{sec:5} that most of the density of $\sigma(t,x)$ will remain in the closed ball $B_{R}$, and $\sigma(t,x)$ can be approximated well by the corresponding initial-boundary value problem of DMZ equation in $B_{R}$. This result also implies that it is very unlikely for the state process to reach infinity within finite terminal time. 
			
			For the approximated solution $\tilde{u}_{k+1}(\tau_{k},x)$ given by Yau-Yau algorithm, we have first proved in Section \ref{sec:6} that $\tilde{u}_{k+1}(\tau_{k},x)$, which evolves in a recursive manner, will not explode in finite time interval, even if the time-discretization step $\delta\rightarrow 0$. And then, in Section \ref{sec:7}, the convergence of $\tilde{u}_{k+1}(\tau_{k},x)$ is proved and the convergence rate is also estimated to be $\sqrt{\delta}$.
			
			It is clear that the properties of exact solutions and approximated solutions, which we have proved in this paper, highly rely on the nice properties of Brownian motion and Gaussian distributions, especially the Markov and light-tail properties. On the one hand, Brownian motion and Gaussian distribution are up to now, among the most commonly used objects in the mathematical modeling of many areas of applications, and can describe most scenarios in practice. On the other hand, for those systems driven by non-Markov or heavy-tailed processes, minimum mean square criteria, together with the conditional expectations (if exist), may not result in a satisfactory estimation of the state process. In this case, the studies of estimations based on other criteria, such as maximum a posteriori (MAP) \cite{godsill2001maximum}\cite{saha2012particle}\cite{kang2023log}, will be a promising direction.
			
			Finally, in this paper, we only consider filtering systems and conduct convergence analysis in time interval $[0,T]$ with a fixed finite terminal time $T$. It is also interesting to study the behavior of the DMZ equation and the approximation capability of Yau-Yau algorithm in the case where the terminal time $T\rightarrow\infty$, especially for filtering systems with further stable assumptions. We will continue working on how to combine the existing studies on filter stability, such as \cite{Atar1998}\cite{Ocone1996}, with our techniques developed in this paper, and hopefully, obtain some convergence results of Yau-Yau algorithm for the whole time line $(0,\infty)$. 

\section*{Declarations}
\subsection*{Funding}

This work is supported by National Natural Science
Foundation of China (NSFC) grant (12201631) and Tsinghua University Education Foundation fund (042202008).
\subsection*{Conflict of interest/Competing interests}
The authors have no competing interests to declare that are relevant to the content of this article.

\subsection*{Ethics approval and consent to participate}
Not applicable.
\subsection*{Consent for publication}
Not applicable.
\subsection*{Data, Materials and/or Code availability}
Not applicable.

\subsection*{Author contribution}
All authors contributed to the study conception and design. The first draft of the manuscript was written by Zeju Sun and all authors commented on previous versions of the manuscript. All authors read and approved the final manuscript.

%
%

%
%
%
%

\begin{appendices}

\section{Regularity Results of Parabolic Partial Differential Equation and Stochastic Evolution Equation} 
In this appendix, we will provide a detailed proof of the regularity results of the parabolic partial differential equation and the stochastic evolution equation. 

For the purpose of deriving \eqref{eq:110} and \eqref{eq:132}, the regularity results is slightly different from standard ones considered in square-integrable functional spaces.
\begin{theorem}
	Let $\sigma(t,x)$ be the solution of the following IBV problem: 
	\begin{equation}
		\left\{\begin{aligned}
			&\frac{\partial\sigma(t,x)}{\partial t} = \frac{1}{2}\sum_{i,j=1}^{d}\frac{\partial^{2}}{\partial x_{i}\partial x_{j}}(a^{ij}(x)\sigma(t,x)) - \sum_{i=1}^{d}\frac{\partial}{\partial x_{i}}(f_{i}(x)\sigma(t,x)) \\
			&\qquad- \frac{1}{2}|h(x)|^{2}\sigma(t,x),\ (t,x)\in[0,T]\times B_{R},\\
			&\sigma(0,x) =\sigma_{0}(x),\ x\in B_{R}\\
			&\sigma(t,x) = 0,\ (t,x)\in[0,T]\times\partial B_{R},
		\end{aligned}\right.
		\label{eq:98a}
	\end{equation}
	where $B_{R} = \{x\in\mathbb{R}^{d}:|x|\leq R\}$ is the ball in $\mathbb{R}^{d}$ with radius $R$; $a:\mathbb{R}^{d}\rightarrow\mathbb{R}^{d\times d}$, $f:\mathbb{R}^{d}\rightarrow\mathbb{R}^{d}$, $h:\mathbb{R}^{d}\rightarrow\mathbb{R}^{d}$ are smooth enough functions. Assume that the matrix-valued function $a(x)$ is uniformly positive definite, i.e., there exists $\lambda > 0$, such that
	\begin{equation}
		\sum_{i,j=1}^{d}a^{ij}(x)\xi_{i}\xi_{j}\geq \lambda|\xi|^{2},\ \forall\ x\in B_{R},\ \xi\in\mathbb{R}^{d}.
		\label{eq:137}
	\end{equation}
	If the initial value $\sigma_{0}(x)$ is quartic-integrable in $B_{R}$, then there exists a constant $C > 0$, which depends on the coefficients of the system, such that
	\begin{equation}
		\int_{B_{R}}\sigma^{4}(T,x)dx\leq e^{CT}\int_{B_{R}}\sigma_{0}^{4}(x)dx.
	\end{equation} 
\end{theorem}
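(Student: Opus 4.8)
The plan is to run an $L^{4}$ energy estimate directly on $\sigma$, mirroring the standard $L^{2}$ argument in \cite{RefWorks:RefID:3-evans2010partial} but carrying one extra pair of powers of $\sigma$. First I would set $E(t):=\int_{B_{R}}\sigma^{4}(t,x)\,dx$ and differentiate under the integral sign (justified by the parabolic regularity of the solution), which gives
\[
\frac{dE}{dt}=4\int_{B_{R}}\sigma^{3}\Big[\tfrac12\sum_{i,j=1}^{d}\partial_{ij}(a^{ij}\sigma)-\sum_{i=1}^{d}\partial_{i}(f_{i}\sigma)-\tfrac12|h|^{2}\sigma\Big]\,dx .
\]
The term coming from $-\tfrac12|h|^{2}\sigma$ equals $-2\int_{B_{R}}|h|^{2}\sigma^{4}\,dx\le 0$, so it may simply be discarded in an upper bound.

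Next I would integrate by parts. Since the spatial operator is in divergence form, write it as $\sum_{i}\partial_{i}G_{i}$ with $G_{i}=\tfrac12\sum_{j}\partial_{j}(a^{ij}\sigma)-f_{i}\sigma$, and integrate against $\sigma^{3}$. Because $\sigma$ vanishes on $\partial B_{R}$, so does $\sigma^{3}$, and every boundary contribution from the Gauss--Green formula drops out. Using the symmetry of $a$, this produces
\[
\frac{dE}{dt}\le -6\int_{B_{R}}\sigma^{2}\sum_{i,j=1}^{d}a^{ij}\,\partial_{i}\sigma\,\partial_{j}\sigma\,dx-6\int_{B_{R}}\sigma^{3}\sum_{i,j=1}^{d}(\partial_{j}a^{ij})\,\partial_{i}\sigma\,dx+12\int_{B_{R}}\sigma^{3}\sum_{i=1}^{d}f_{i}\,\partial_{i}\sigma\,dx .
\]
The first integral is the dissipative term: by the uniform ellipticity \eqref{eq:137} it is bounded above by $-6\lambda\int_{B_{R}}\sigma^{2}|\nabla\sigma|^{2}\,dx$, which is exactly the good (negative) quantity I want to keep.

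For the two remaining first-order terms I would use that the coefficients are smooth, hence $a$, its first derivatives, and $f$ are all bounded on the compact ball $B_{R}$ (note that $f$ need only be bounded on $B_{R}$, e.g.\ by $LR+|f(0)|$, not globally, so the resulting constant is allowed to depend on $R$). Then I factor $|\sigma|^{3}|\nabla\sigma|=\big(|\sigma|\,|\nabla\sigma|\big)\cdot\sigma^{2}$ and apply Young's inequality $ab\le\epsilon a^{2}+\tfrac{1}{4\epsilon}b^{2}$ to each term, splitting it into $\epsilon\,\sigma^{2}|\nabla\sigma|^{2}+\tfrac{1}{4\epsilon}\sigma^{4}$. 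Choosing $\epsilon$ small enough that the gradient contributions are absorbed into the $-6\lambda\int_{B_{R}}\sigma^{2}|\nabla\sigma|^{2}$ term leaves $\tfrac{dE}{dt}\le C\,E(t)$, with $C$ depending only on $\lambda$, $d$, $R$, and the sup-bounds of the coefficients on $B_{R}$; Gronwall's inequality then yields $E(T)\le e^{CT}E(0)$, as claimed.

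The main obstacle is the apparent mismatch of degrees in the cross terms $\sigma^{3}\partial_{i}\sigma$, which are quartic in $\sigma$ yet contain only a single derivative: naive bounding leaves a stray power. The resolution is precisely the factorization above, which exposes the dissipative density $\sigma^{2}|\nabla\sigma|^{2}=\tfrac14|\nabla(\sigma^{2})|^{2}$ so that Young's inequality can trade the gradient against the $L^{4}$ mass. The only further points requiring care are the justification of differentiating $E(t)$ under the integral and the vanishing of the boundary terms, both of which follow from the Dirichlet condition and standard parabolic regularity.
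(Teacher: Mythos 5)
Your proposal is correct and follows essentially the same route as the paper's proof: an $L^{4}$ energy estimate after rewriting the operator in divergence form, integration by parts with boundary terms killed by the Dirichlet condition, uniform ellipticity to extract the dissipative term $-6\lambda\int_{B_{R}}\sigma^{2}|\nabla\sigma|^{2}dx$, Young's inequality to absorb the cross terms, and Gronwall. The only cosmetic difference is that the paper first absorbs $\sum_{j}\partial_{j}a^{ij}$ into a modified drift $\tilde{f}_{i}=f_{i}-\sum_{j}\partial_{j}a^{ij}$ so there is a single cross term (and it chooses the Young weight $\sqrt{\lambda}$ to cancel the gradient term exactly), whereas you keep the two cross terms separate and absorb them with a small generic $\epsilon$ — mathematically the same argument.
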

\begin{remark}
	In fact, Assumption \textbf{(A2)} in the main text will imply the coercivity condition \eqref{eq:137}. This is because the closed ball $B_{R}$ is a compact set of $\mathbb{R}^{d}$, and the continuous function $\lambda(x)$ in Assumption \textbf{(A2)} will map $B_{R}$ to a compact set. Therefore, there exists $\lambda > 0$, such that $\lambda(x)\geq \lambda > 0$, for all $x\in B_{R}$. 
\end{remark}
\begin{proof}
	Let us define
	\begin{equation}
		\tilde{f}_{i}(x) = f_{i}(x) - \sum_{j=1}^{d}\frac{\partial a^{ij}(x)}{\partial x},\ i=1,\cdots,d.
	\end{equation}
	Then the parabolic equation \eqref{eq:98a} can be written in a divergence form
	\begin{equation}
		\frac{\partial\sigma(t,x)}{\partial t} = \frac{1}{2}\sum_{i,j=1}^{d}\frac{\partial}{\partial x_{i}}\left(a^{ij}(x)\frac{\partial}{\partial x_{j}}\sigma(t,x)\right) - \sum_{i=1}^{d}\frac{\partial}{\partial x_{i}}(\tilde{f}_{i}(x)\sigma(t,x))-\frac{1}{2}|h(x)|^{2}\sigma(t,x).
	\end{equation}
	Hence,
	\begin{equation}
		\begin{aligned}
			\frac{d}{dt}&\int_{B_{R}}\sigma^{4}(t,x)dx = \int_{B_{R}}4\sigma^{3}(t,x)\frac{\partial \sigma}{\partial t}dx\\
			&=\int_{B_{R}}2\sigma^{3}\sum_{i,j=1}^{d}\frac{\partial}{\partial x_{i}}\left(a^{ij}\frac{\partial}{\partial x_{j}}\sigma\right)dx -\int_{B_{R}}4\sigma^{3}\sum_{i=1}^{d}\frac{\partial}{\partial x_{i}}(\tilde{f}_{i}\sigma)dx - \int_{B_{R}}2\sigma^{4}|h|^{2}dx\\
			&= -6\int_{B_{R}}\sigma^{2}\sum_{i,j=1}^{d}a^{ij}\frac{\partial \sigma}{\partial x_{i}}\frac{\partial \sigma}{\partial x_{j}}dx + 12\int_{B_{R}}\sum_{i=1}^{d}\tilde{f}_{i}\sigma^{3}\frac{\partial\sigma}{\partial x_{i}}dx -2\int_{B_{R}}\sigma^{4}|h|^{2}dx\\
			&\leq -6\lambda\int_{B_{R}}\sigma^{2}|\nabla\sigma|^{2}dx + 12\int_{B_{R}}\sum_{i=1}^{d}\frac{\tilde{f}_{i}\sigma^{2}}{\sqrt{\lambda}}\cdot\left(\sqrt{\lambda}\sigma\frac{\partial\sigma}{\partial x_{i}}\right)dx -2 \int_{B_{R}}\sigma^{4}|h|^{2}dx\\
			&\leq -6\lambda\int_{B_{R}}\sigma^{2}|\nabla\sigma|^{2}dx + 12\int_{B_{R}}\sum_{i=1}^{d}\biggl(\frac{\tilde{f}_{i}^{2}\sigma^{4}}{2\lambda} + \frac{\lambda}{2}\sigma^{2}\biggl|\frac{\partial\sigma}{\partial x_{i}}\biggr|^{2}\biggr)dx - 2 \int_{B_{R}}\sigma^{4}|h|^{2}dx\\
			&\leq \int_{B_{R}}\biggl(\frac{6}{\lambda}\sum_{i=1}^{d}\tilde{f}_{i}^{2} - 2|h|^{2}\biggr)\sigma^{4}(t,x)dx.
		\end{aligned}
	\end{equation}
	In the bounded domain $B_{R}$, there exists a constant $C> 0$, such that
	\begin{equation}
		\biggl|\frac{6}{\lambda}\sum_{i=1}^{d}\tilde{f}_{i}^{2} - 2|h|^{2}\biggr|\leq C.
	\end{equation}
	Thus, 
	\begin{equation}
		\frac{d}{dt}\int_{B_{R}}\sigma^{4}(t,x)dx\leq C\int_{B_{R}}\sigma^{4}(t,x)dx,\ t\in [0,T],
	\end{equation}
	and by Gronwall's inequality, we have
	\begin{equation}
		\int_{B_{R}}\sigma^{4}(T,x)dx\leq e^{CT}\int_{B_{R}}\sigma_{0}^{4}(x)dx.
	\end{equation}
\end{proof}
\begin{theorem}
	Consider the IBV problem of stochastic partial differential equation given by
	\begin{equation}
		\left\{\begin{aligned} 
			&d\sigma(t,x) = \mathcal{L}^{*}\sigma(t,x)dt + \sum_{j=1}^{d}h_{j}(x)\sigma(t,x)dY_{t,j},\ t\in[0,T]\\
			&\sigma(t,x) = 0,\ (t,x)\in[0,T]\times\partial B_{R},\\
			&\sigma(0,x) = \sigma_{0}(x),\ x\in B_{R}.
		\end{aligned} \right.
		\label{eq:147}
	\end{equation}
	where $Y = \{Y_{t}:0\leq t\leq T\}$ is a standard $d$-dimensional Brownian motion in the filtered probability space $\left(\Omega,\mathcal{F},\{\mathcal{F}_{t}\}_{0\leq t\leq T}, P\right)$; $B_{R} = \{x\in\mathbb{R}^{d}:|x|\leq R\}$ is the ball in $\mathbb{R}^{d}$ with radius $R$, and
	\begin{equation}
		\mathcal{L}^{*}(\star) = \frac{1}{2}\sum_{i,j=1}^{d}\frac{\partial^{2}}{\partial x_{i}\partial x_{j}}(a^{ij}(x)\star) - \sum_{i=1}^{d}\frac{\partial}{\partial x_{i}}(f_{i}(x)\star). 
	\end{equation}
	Assume that the coefficients $a$, $f$, $h$ are smooth enough and the Assumption \textbf{(A2)} holds for the matrix-valued function $a(x)$, which implies that $a(x)$ is uniformly positive definite in $B_{R}$, i.e., there exists $\lambda > 0$, such that
	\begin{equation}
		\sum_{i,j=1}^{d}a^{ij}(x)\xi_{i}\xi_{j}\geq \lambda|\xi|^{2},\ \forall\ x\in B_{R},\ \xi\in\mathbb{R}^{d}.
	\end{equation}
	If the initial value $\sigma_{0}(x)$ is square-integrable in $B_{R}$, then there exists a constant $C > 0$, which depends on $T$, $R$ and the coefficients of the system, such that 
	\begin{equation}
		E\left(\int_{B_{R}}|\sigma(T,x)|^{2}dx\right)^{2}\leq C\left(\int_{B_{R}}|\sigma_{0}(x)|^{2}dx\right)^{2}.
		\label{eq:132a}
	\end{equation}
\end{theorem}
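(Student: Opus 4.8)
The plan is to derive a closed differential inequality for the second moment of the squared $L^2$-norm and then invoke Gronwall's inequality, paralleling the deterministic estimate of the preceding theorem but now carrying the stochastic integral terms through a second application of It\^o's formula. Write $N(t) = \int_{B_R}\sigma^2(t,x)\,dx$ for the squared $L^2$-norm of the solution; the target estimate \eqref{eq:132a} is precisely $E[N(T)^2]\leq C\,N(0)^2$, so I would work directly with the real-valued process $N(t)$.

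First I would apply It\^o's formula to $\sigma^2(t,x)$ and integrate over $B_R$. Since $d\sigma = \mathcal{L}^*\sigma\,dt + \sum_j h_j\sigma\,dY_{t,j}$, the It\^o correction produces the quadratic-variation term $\sum_j h_j^2\sigma^2\,dt$, and integrating $d(\sigma^2) = 2\sigma\,d\sigma + \sum_j h_j^2\sigma^2\,dt$ over the ball gives
\begin{equation}
	dN(t) = \Bigl[2\int_{B_R}\sigma\,\mathcal{L}^*\sigma\,dx + \sum_{j=1}^d\int_{B_R}h_j^2\sigma^2\,dx\Bigr]dt + 2\sum_{j=1}^d\Bigl(\int_{B_R}h_j\sigma^2\,dx\Bigr)dY_{t,j}.
\end{equation}
The drift is controlled exactly as in the deterministic case: integrating $\int_{B_R}\sigma\,\mathcal{L}^*\sigma\,dx$ by parts (boundary terms vanish since $\sigma|_{\partial B_R}\equiv 0$) yields the good negative term $-\tfrac12\sum_{i,j}\int_{B_R}a^{ij}\partial_i\sigma\,\partial_j\sigma\,dx\leq -\tfrac{\lambda}{2}\int_{B_R}|\nabla\sigma|^2\,dx$ by coercivity, together with lower-order contributions of the form $\int f_i\sigma\,\partial_i\sigma$ and $\int\partial_j a^{ij}\,\sigma\,\partial_i\sigma$, which I would absorb into the gradient term via Young's inequality, leaving a remainder bounded by $C_1 N(t)$. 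Since $a$, $f$, $h$ are smooth and $B_R$ is bounded, all these constants are finite. Discarding the nonpositive gradient term, the drift coefficient $A(t)$ of $N$ satisfies $A(t)\leq C_1 N(t)$, and each martingale coefficient obeys $|2\int_{B_R}h_j\sigma^2\,dx|\leq 2\|h_j\|_{L^\infty(B_R)}N(t)$.

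Next I would apply It\^o's formula a second time, now to $N(t)^2$, obtaining $d(N^2) = 2N\,dN + (dN)^2$, where the quadratic variation is $(dN)^2 = \sum_j\bigl(2\int_{B_R}h_j\sigma^2\,dx\bigr)^2\,dt\leq 4d\max_j\|h_j\|_{L^\infty(B_R)}^2\,N^2\,dt$. Taking expectation eliminates the martingale part, and combining with $2NA\leq 2C_1 N^2$ (valid since $N\geq 0$) gives $\frac{d}{dt}E[N(t)^2]\leq C_2\,E[N(t)^2]$ for a constant $C_2$ depending on $T$, $R$ and the coefficients. Gronwall's inequality then yields $E[N(T)^2]\leq e^{C_2 T}N(0)^2$, which is \eqref{eq:132a} with $C = e^{C_2 T}$.

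The main obstacle is not the formal computation but its rigorous justification: applying It\^o's formula to the infinite-dimensional functionals $N(t)$ and $N(t)^2$ requires placing the equation in the variational (Gelfand-triple) framework $H_0^1(B_R)\hookrightarrow L^2(B_R)\hookrightarrow H^{-1}(B_R)$ and invoking the It\^o formula for the square of the $L^2$-norm from the variational theory of stochastic evolution equations (e.g.\ Krylov--Rozovskii or Pardoux), or equivalently carrying out the estimate on Galerkin approximations and passing to the limit. A related technical point is ensuring $E[N(t)^2]$ is finite a priori, so that the stochastic integrals are genuine martingales with zero mean and Gronwall applies; I would handle this by first stopping at $\tau_n = \inf\{t:N(t)\geq n\}$, deriving the inequality for $E[N(t\wedge\tau_n)^2]$, and then letting $n\to\infty$ by monotone convergence.
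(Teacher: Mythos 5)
Your proposal is correct and follows essentially the same route as the paper's proof: Itô's formula applied to $\Phi(t)=\int_{B_R}\sigma^2\,dx$ and then to $\Phi(t)^2$, integration by parts with the boundary terms vanishing, coercivity plus Young's inequality to absorb the gradient terms, expectation to kill the martingale part, and Gronwall. Your closing remarks on rigor (the variational/Gelfand-triple justification of the infinite-dimensional Itô formula and the stopping-time localization to ensure the stochastic integrals are true martingales) address technical points the paper passes over silently, and are a welcome addition rather than a deviation.
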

\begin{proof}
	Let us define
	\begin{equation}
		\tilde{f}_{i}(x) = f_{i}(x) - \sum_{j=1}^{d}\frac{\partial a^{ij}(x)}{\partial x},\ i=1,\cdots,d.
	\end{equation}
	Then the stochastic partial differential equation in \eqref{eq:147} can be rewritten in divergence form:
	\begin{equation}
		d\sigma(t,x) = \frac{1}{2}\sum_{i,j=1}^{d}\frac{\partial}{\partial x_{i}}\biggl(a^{ij}\frac{\partial\sigma}{\partial x_{j}}\biggr)-\sum_{i=1}^{d}\frac{\partial}{\partial x_{i}}(\tilde{f}_{i}\sigma) + \sum_{j=1}^{d}h_{j}\sigma dY_{t,j}.
	\end{equation} 
	Let
	\begin{equation}
		\Phi(t) = \int_{B_{R}}\sigma^{2}(t,x)dx,\ t\in[0,T],
	\end{equation}
	then according to It\^o's formula,
	\begin{equation}
		\begin{aligned}
			d\Phi(t) = \left(\int_{B_{R}}(2\sigma \mathcal{L}^{*}\sigma + \sigma^{2}|h|^{2})dx\right)dt + \sum_{j=1}^{d}\left(\int_{B_{R}}2h_{j}\sigma^{2}dx\right)dY_{t,j}
		\end{aligned}
	\end{equation}
	and
	\begin{equation}
		\begin{aligned} 
			d\Phi^{2}(t) &= 2\left(\int_{B_{R}}\sigma^{2}dx\right)\left(\int_{B_{R}}(2\sigma \mathcal{L}^{*}\sigma + \sigma^{2}|h|^{2})dx\right)dt \\
			&\quad+ 2\Phi(t)\sum_{j=1}^{d}\left(\int_{B_{R}}2h_{j}\sigma^{2}dx\right)dY_{t,j} + \sum_{j=1}^{d}\left(\int_{B_{R}}2h_{j}\sigma^{2}dx\right)^{2}dt
		\end{aligned} 
	\end{equation}
	After taking expectations, we have
	\begin{equation}
		\begin{aligned} 
			\frac{d}{dt}E\Phi^{2}(t) &= \frac{d}{dt}E\left(\int_{B_{R}}\sigma^{2}(t,x)dx\right)^{2}  \\
			&= E\biggl[2\left(\int_{B_{R}}\sigma^{2}dx\right)\left(\int_{B_{R}}(2\sigma \mathcal{L}^{*}\sigma + \sigma^{2}|h|^{2})dx\right)\\
			&\qquad+\sum_{j=1}^{d}\left(\int_{B_{R}}2h_{j}\sigma^{2}dx\right)^{2}\biggr]
		\end{aligned}
	\end{equation}
	Notice that
	\begin{equation}
		\begin{aligned} 
			\int_{B_{R}}2\sigma\mathcal{L}^{*}\sigma dx &= \int_{B_{R}}\sigma\sum_{i,j=1}^{d}\frac{\partial}{\partial x_{i}}\biggl(a^{ij}\frac{\partial\sigma}{\partial x_{j}}\biggr)dx - \int_{B_{R}}2\sigma\sum_{i=1}^{d}\frac{\partial}{\partial x_{i}}(\tilde{f}_{i}\sigma)dx\\
			&= - \int_{B_{R}}\sum_{i,j=1}^{d}a^{ij}\frac{\partial\sigma}{\partial x_{i}}\frac{\partial \sigma}{\partial x_{j}}dx + 2\int_{B_{R}}\sum_{i=1}^{d}\tilde{f}_{i}\sigma\frac{\partial \sigma}{\partial x_{i}}dx\\
			&\leq -\lambda\int_{B_{R}}|\nabla\sigma|^{2}dx + 2\int_{B_{R}}\sum_{i=1}^{d}\frac{\tilde{f}_{i}\sigma}{\sqrt{\lambda}}\cdot\biggl(\sqrt{\lambda}\frac{\partial\sigma}{\partial x_{i}}\biggr)dx\\
			&\leq -\lambda\int_{B_{R}}|\nabla\sigma|^{2}dx + \int_{B_{R}}\frac{1}{\lambda}\sum_{i=1}^{d}\tilde{f}_{i}^{2}\sigma^{2}dx + \lambda\int_{B_{R}}|\nabla\sigma|^{2}dx.
		\end{aligned}
	\end{equation}
	Hence,
	\begin{equation}
		\begin{aligned} 
			\frac{d}{dt}E\left(\int_{B_{R}}\sigma^{2}(t,x)dx\right)^{2}&\leq 2E\biggl[\biggl(\int_{B_{R}}\sigma^{2}dx\biggr)\biggl(\int_{B_{R}}\biggl(\frac{1}{\lambda}|\tilde{f}|^{2}+|h|^{2}\biggr)\sigma^{2}dx\biggr)\biggr]\\
			&\quad +E\biggl[\sum_{j=1}^{d}\left(\int_{B_{R}}2h_{j}\sigma^{2}dx\right)^{2}\biggr]
		\end{aligned} 
	\end{equation}
	In the bounded domain $B_{R}$, there exists $M > 0$, such that
	\begin{equation}
		\frac{1}{\lambda}|\tilde{f}(x)|^{2}+|h(x)|^{2}\leq M,\ |h_{j}(x)|\leq M,\ \forall \ x\in B_{R}.
	\end{equation}
	Thus,
	\begin{equation}
		\frac{d}{dt}E\left(\int_{B_{R}}\sigma^{2}(t,x)dx\right)^{2}\leq (2M+4dM^{2})E\left(\int_{B_{R}}\sigma^{2}(t,x)dx\right)^{2}
	\end{equation}
	According to Gronwall's inequality, 
	\begin{equation}
		E\left(\int_{B_{R}}\sigma^{2}(T,x)dx\right)^{2}\leq e^{(2M+4dM^{2})T}\left(\int_{B_{R}}\sigma_{0}^{2}(x)dx\right)^{2},
	\end{equation}
	which is the desired result.
\end{proof}

\end{appendices}



\end{document}